\address{Research and Education Center for Natural Sciences, Keio University, 4-1-1 Hiyoshi, Kohoku-ku, Yokohama, Kanagawa, 223-8521, Japan}
\email{\href{mailto:isoshima@keio.jp}{isoshima@keio.jp}}
\address{Department of Mathematics, Faculty of Science, Okayama University, Okayama, 700-8530, Japan}
\email{\href{mailto:reo0713@s.okayama-u.ac.jp}{reo0713@s.okayama-u.ac.jp}}
\DeclarePairedDelimiter{\abs}{\lvert}{\rvert}
\theoremstyle{plain}
\newtheorem{thm}{Theorem}[section]
\newtheorem{prop}[thm]{Proposition}
\newtheorem{lem}[thm]{Lemma}
\newtheorem{cor}[thm]{Corollary}
\newtheorem*{thm*}{Theorem}
\newtheorem*{cor*}{Corollary}
\newtheorem*{prop*}{Proposition}
\theoremstyle{definition}
\newtheorem{dfn}[thm]{Definition}
\newtheorem{rem}[thm]{Remark}
\newtheorem{exm}[thm]{Example}
\newtheorem{que}[thm]{Question}
\newtheorem{con}[thm]{Conjecture}
\newtheorem*{que*}{Question}
\newtheorem*{con*}{Conjecture}
\newtheorem*{nota*}{Notation}
\begin{document}

\title{Trisections and Lefschetz fibrations with $(-n)$-sections}

\author{Tsukasa Isoshima}
\author{Reo Yabuguchi}
\subjclass{}
\keywords{Trisection, Lefschetz fibration}

\begin{abstract}
Castro and Ozbagci constructed a trisection of a closed 4-manifold admitting a Lefschetz fibration with a $(-1)$-section such that the corresponding trisection diagram can be explicitly constructed from a monodromy of the Lefschetz fibration. In this paper, for a closed 4-manifold $X$ admitting an achiral Lefschetz fibration with a $(-n)$-section, we construct a trisection of $X \# n\mathbb{C}P^2$ if $n$ is positive and $X \# (-n)\overline{\mathbb{C}P^2}$ if $n$ is negative such that the corresponding trisection diagram can be explicitly constructed from a monodromy of the Lefschetz fibration. We also construct a trisection of the fiber sum of two achiral Lefschetz fibrations with $n$- and $(-n)$-sections such that the corresponding trisection diagram can be explicitly constructed from monodromies of the Lefschetz fibrations.
\end{abstract}

\maketitle

\section{Introduction}

A trisection, which was introduced by Gay and Kirby \cite{MR3590351}, is a decomposition of a closed 4-manifold into three 4-dimensional 1-handlebodies. A trisection diagram that describes a closed 4-manifold is defined from a trisection, which consists of a closed surface and certain three cut systems on the surface. It is shown \cite{MR3590351} that every closed 4-manifold admits a trisection and a trisection diagram. However, it is generally difficult that one explicitly constructs a trisection diagram of a given 4-manifold. In this paper, we consider a method to construct explicit trisection diagrams via Lefschetz fibrations over $S^2$ with $(-n)$-sections, where $n$ is any integer. It is known \cite{MR3590351} that two closed 4-manifolds are diffeomorphic if and only if the corresponding trisection diagrams are related by surface diffeomorphisms, handle slides among the same family curves and (de)stabilizations.

A relative trisection, which was introduced by Castro \cite{castro2016relative}, is a decomposition of a 4-manifold with boundary. A relative trisection diagram that describes a 4-manifold with boundary is defined from a relative trisection. It is known \cite{castro2016relative} that an open book decomposition of the boundary of a 4-manifold with boundary is naturally induced from a relative trisection or a relative trisection diagram of the 4-manifold.

A genus-$g$ Lefschetz fibration over a surface $\Sigma$ is a smooth map 
$f \colon X \to \Sigma$ with finitely many critical points, each of which admits a 
complex local model $(z_1,z_2) \mapsto z_1 z_2$.  
Away from the critical values, $f$ is a surface bundle with fiber $\Sigma_g$, and each critical point 
determines a vanishing cycle on the fiber. The global topology of $X$ is encoded by a monodromy in the mapping class group $\mathrm{Mod}(\Sigma_g)$ into Dehn twists along these vanishing cycles.  
When both right- and left-handed Dehn twists appear in the monodromy, the Lefschetz fibration is said to be achiral.  
A section of self-intersection number $-n$ will be referred to as a $(-n)$-section.

Castro and Ozbagci \cite{MR3999550} constructed a trisection of a closed 4-manifold that admits a Lefschetz fibration over $S^2$ with a $(-1)$-section such that the corresponding trisection diagram can be drawn explicitly from a monodromy of the Lefschetz fibration. This construction is obtained as follows: First, decompose the 4-manifold into a regular neighborhood $V$ of the union of a regular fiber and a $(-1)$-section and the complement $W$ of $V$. Then, construct Lefschetz fibrations over $D^2$ for $V$ and $W$. After that, convert the Lefschetz fibrations into relative trisections. Finally, glue the relative trisections along the boundaries.

In this paper, we consider constructing a trisection of a closed 4-manifold $X$ that admits a Lefschetz fibration over $S^2$ with a $(-n)$-section for any integer $n$. Let $X=V \cup_{\partial}W$, where $V$ is a regular neighborhood of the union of a regular fiber and a $(-n)$-section and $W$ is the complement of $V$. If $n \not=1$, we cannot construct the trisection by the Castro and Ozbagci method since in this case, the induced open book decompositions of relative trisections of $V$ and $W$ obtained from their Lefschetz fibrations over $D^2$ are not compatible, and hence we cannot glue the relative trisections. Thus, we obtain the following theorem by decomposing the 4-manifold $X$ into just a neighborhood of a regular fiber and its complement. 
Here, let $n \mathbb{C}P^2$ denote $n \mathbb{C}P^2$ for a positive integer $n$, $(-n) \overline{\mathbb{C}P^2}$ for a negative integer $n$ and $S^4$ for $n=0$.

\begin{thm*}[Theorem \ref{thm:-n-section}]
Let $X$ be a closed 4-manifold admitting a genus-$p$ achiral Lefschetz fibration over $S^2$ with $m$ singular fibers and a $(-n)$-section, where $n$ is any integer. Then, $X \# n \mathbb{C}P^2$ admits a $(2p+m+\abs{n}+5, 2p+1)$-trisection whose corresponding trisection diagram can be constructed explicitly from a monodromy of the Lefschetz fibration of $X$.
\end{thm*}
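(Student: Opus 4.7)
The plan is to adapt the Castro--Ozbagci template for $(-1)$-sections: split $X$ along a regular fiber, produce a relative trisection on each half from the induced Lefschetz fibration over $D^2$, and glue. The $n$ copies of $\mathbb{C}P^2$ will enter the argument precisely to supply the compatibility that an exceptional section provides automatically when $n=1$.

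Concretely, I would set $V = f^{-1}(D)$ for a small disk $D \subset S^2$ avoiding all critical values, so $V \cong \Sigma_p \times D^2$ with the trivial fibration, and take $W = X \setminus \mathrm{int}(V)$, which carries a Lefschetz fibration over $D^2$ with all $m$ singular fibers and a disk-section $\sigma_W$ of self-intersection $-n$. I would then modify $W$ by $\abs{n}$ boundary-connected sums with $\mathbb{C}P^2$ (or $\overline{\mathbb{C}P^2}$ for $n<0$), tubing $\sigma_W$ to the exceptional spheres to obtain a piece $W' = W \,\natural\, n\mathbb{C}P^2$ whose section has self-intersection $0$. By construction $V \cup_\partial W' = X \# n\mathbb{C}P^2$, and the monodromy factorization of $W'$ is the original one augmented by $\abs{n}$ boundary-parallel Dehn twists. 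With the section now a $0$-section, the Castro--Ozbagci conversion applies and produces a relative trisection of $W'$ whose induced open book on $\partial W'$ has page $\Sigma_p$ and trivial monodromy, matching the open book induced by the standard relative trisection of the trivial fibration on $V$.

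Gluing the two relative trisections along this common open book produces a trisection of $X \# n\mathbb{C}P^2$, and its three cut systems can be listed directly from the data of the Lefschetz fibration: one family encodes the pages, a second records the original vanishing cycles $c_1,\ldots,c_m$, and a third records the $\abs{n}$ boundary-parallel twists coming from the blow-ups together with the stabilization curves of the relative trisection of $V$. A handle-and-genus count then returns the advertised parameters $(2p+m+\abs{n}+5,\,2p+1)$. The main obstacle I expect is the modification step: one must place the blow-ups and tubes so that the induced open book on $\partial W'$ after the Castro--Ozbagci conversion is not merely trivial in isolation but \emph{matches} the specific page and monodromy coming from $\partial V$, and one must verify that each of the $\abs{n}$ extra boundary-parallel twists contributes exactly one extra curve to the trisection surface. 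Once that compatibility is pinned down, the remainder is bookkeeping in the relative-trisection formalism, parallel to the $n=1$ case already handled in \cite{MR3999550}.
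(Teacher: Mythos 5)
Your overall strategy---split $X$ along a regular fiber, convert each half to a relative trisection, and glue---is the same as the paper's, but there is a genuine gap at the step you yourself flag as the ``main obstacle,'' and it is not a matter of bookkeeping. The piece $V=\Sigma_p\times D^2$ with the trivial fibration has \emph{closed} fibers, so what it induces on $\partial V=\Sigma_p\times S^1$ is a surface bundle, not an open book decomposition: there is no binding, and an ``open book with page $\Sigma_p$'' as you write is not a well-defined object. Lemma \ref{lem:LF_rtd} produces a relative trisection only from a Lefschetz fibration over $D^2$ whose regular fiber has nonempty boundary, and Lemma \ref{lem:gluing} glues along genuine open books; neither applies to your $V$, so there is no ``standard relative trisection of the trivial fibration on $V$'' to glue to. This is exactly the problem that Lemmas \ref{lem:complement} and \ref{lem:fiber} are designed to solve: both pieces are re-fibered over $D^2$ with fiber $\Sigma_{p,2}$ (two boundary components) and a cancelling pair of boundary-parallel twists $t_{\delta_1}^{\pm1}t_{\delta_2}^{\mp1}$, and a handle-diagram computation verifies that the total spaces are still $X-\Sigma_p\times D^2$ and $\Sigma_p\times D^2\# n\mathbb{C}P^2$. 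Without some such device your two boundaries never carry matching open books, and the construction does not get off the ground.

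A second, related divergence: you place the $\abs{n}$ copies of $\mathbb{C}P^2$ on the $W$ side and normalize the section to square $0$, whereas the paper places them on the fiber-neighborhood side, realized by the twists $t_{\delta_3}^{-n}$ in Lemma \ref{lem:fiber}. The point of the $(-n)$-section in the paper's proof is that it forces $\varphi=t_{\delta_3}^{n}$, so the open book monodromy of the complement piece, $t_{\delta_3}^{n}t_{\delta_1}t_{\delta_2}^{-1}$, is exactly inverse to that of the other piece, $t_{\delta_3}^{-n}t_{\delta_1}^{-1}t_{\delta_2}$, which is what the orientation-reversing gluing of Lemma \ref{lem:gluing} requires. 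Your intuition that adding $\abs{n}$ negative boundary-parallel twists effects $W\mapsto W\#n\mathbb{C}P^2$ is sound (it is morally the content of Lemma \ref{lem:fiber}), but in your arrangement the vanishing-cycle count is $m+\abs{n}$ on one side and $0$ on the other with $b=1$ pages, so the glued genus cannot come out to $2p+m+\abs{n}+5$; the $+5$ in Theorem \ref{thm:-n-section} is precisely $2+2+(b-1)$ with $b=2$, coming from the four auxiliary boundary twists. To repair the proposal you would need to prove analogues of Lemmas \ref{lem:complement} and \ref{lem:fiber} for your decomposition, at which point you would essentially have reproduced the paper's argument.
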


Given genus-$g$ Lefschetz fibrations $f_i \colon X_i \to S^2$ with regular fibers 
$F \subset X_i$ for $i=1,2$, the fiber sum $X_1 \#_F X_2$ is obtained by removing 
tubular neighborhoods $\nu(F) \cong F \times D^2$ of $F$ from $X_1$ and $X_2$ and gluing 
the resulting boundaries $F \times S^1$ by a fiber-preserving diffeomorphism.  
The resulting 4-manifold again admits a genus-$g$ Lefschetz fibration over $S^2$, 
whose monodromy factorization is given by putting togather those of $X_1$ and $X_2$.
We partially construct a trisection of the fiber sum using a similar idea as Theorem \ref{thm:-n-section}.

\begin{thm*}[Theorem \ref{thm:fibersum}]
Let $X_i$ be closed 4-manifolds admitting genus-$p$ achiral Lefschetz fibrations over $S^2$ with $n_i$ singular fibers for $i=1,2$. Suppose that the Lefschetz fibrations of $X_1$ and $X_2$ have $(-n)$- and $n$-sections, respectively, where $n$ is any positive integer. Then, the fiber sum $X_1 \#_F X_2$ admits a $(2p+n_1+n_2+5, 2p+1)$-trisection whose corresponding trisection diagram can be constructed explicitly from monodromies of the Lefschetz fibrations of $X_1$ and $X_2$.
\end{thm*}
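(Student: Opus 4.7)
The plan is to reduce the statement to the $n = 0$ case of Theorem~\ref{thm:-n-section} applied to the fiber sum $X_1 \#_F X_2$. The key geometric observation is that the $(-n)$-section of $X_1$ and the $n$-section of $X_2$ can be glued across the fiber sum to produce a section of self-intersection $(-n) + n = 0$, and Theorem~\ref{thm:-n-section} applied with $n = 0$ produces a trisection of $X_1 \#_F X_2 \# 0\,\mathbb{C}P^2 = X_1 \#_F X_2$ itself, without any connect-sum correction.

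I would first arrange, after a fiber-preserving isotopy, that the regular fiber $F$ used to perform the fiber sum meets each of the given sections $\sigma_i \subset X_i$ transversely in a single point $p_i$. Removing $\nu(F) \cong F \times D^2$ from each $X_i$ cuts $\sigma_i$ into a disk whose boundary is the circle $\{p_i\} \times S^1 \subset F \times S^1 = \partial \nu(F)$. By choosing the fiber-preserving gluing diffeomorphism of the fiber sum so that $\{p_1\} \times S^1$ is identified with $\{p_2\} \times S^1$ (with the orientation reversal required to match the section co-orientations), the two half-sections glue into a smooth sphere $\sigma \subset X_1 \#_F X_2$. A transverse-perturbation count of self-intersections, which receives contributions only in the two pieces where it equals $-n$ and $n$ respectively, yields $\sigma \cdot \sigma = 0$, so $\sigma$ is a $0$-section of the induced Lefschetz fibration.

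This induced fibration is a genus-$p$ achiral Lefschetz fibration over $S^2$ with $n_1 + n_2$ singular fibers whose monodromy factorization is the concatenation of those of $X_1$ and $X_2$, under the identification of fibers provided by the gluing diffeomorphism. Applying Theorem~\ref{thm:-n-section} with $n = 0$ and $m = n_1 + n_2$ to this fibration produces a $(2p + n_1 + n_2 + 5,\, 2p + 1)$-trisection of $X_1 \#_F X_2$, matching the stated parameters. Since the recipe of Theorem~\ref{thm:-n-section} reads the trisection diagram off from the monodromy factorization and the section data, and since the fiber-sum monodromy is literally the concatenation of the two original monodromies, the resulting diagram is constructed explicitly from the monodromies of the Lefschetz fibrations of $X_1$ and $X_2$.

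The main obstacle will be verifying that the fiber-preserving gluing diffeomorphism can indeed be chosen to match the section boundary circles so that $\sigma$ is a genuinely embedded smooth section of the combined fibration, and confirming that the identification of the fiber-sum monodromy factorization as a concatenation of the two given factorizations is compatible with the section-data input required by the explicit trisection-diagram construction of Theorem~\ref{thm:-n-section}.
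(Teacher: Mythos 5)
Your argument is correct, but it takes a genuinely different route from the paper's. The paper never forms the glued $0$-section: it decomposes $X_1 \#_F X_2$ directly as $(X_1-\Sigma_p\times D^2)\cup_\partial(X_2-\Sigma_p\times D^2)$, applies Lemmas \ref{lem:complement} and \ref{lem:LF_rtd} to each piece to obtain $(p+n_i+2,2p+1;p,2)$-relative trisections, and uses the section hypotheses only algebraically, to see that the induced boundary open books have monodromies $t_{\delta_3}^{n}\circ t_{\delta_1}\circ t_{\delta_2}^{-1}$ and $t_{\delta_3}^{-n}\circ t_{\delta_1}^{-1}\circ t_{\delta_2}$, which match under an orientation-reversing diffeomorphism so that Lemma \ref{lem:gluing} applies; the genus count is $(p+n_1+2)+(p+n_2+2)+1$. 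You instead reduce to Theorem \ref{thm:-n-section} with $n=0$, which implicitly uses the different decomposition $\bigl((X_1\#_F X_2)-\nu(F)\bigr)\cup_\partial(\Sigma_p\times D^2)$, where one piece carries all $n_1+n_2$ vanishing cycles and the other is handled by Lemma \ref{lem:fiber} with $n=0$; the count $(p+n_1+n_2+2)+(p+2)+1$ gives the same total. The extra geometric input you need --- that the two sections glue to a $0$-section --- does go through: a diffeomorphism of the closed fiber carrying one section point to the other is isotopic to the identity, so adjusting the gluing map to match the boundary circles does not change the (untwisted) fiber sum, and the self-intersections add relative to the product framing on $F\times S^1$. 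What the paper's route buys is that it sidesteps this section-gluing argument entirely; what yours buys is that Theorem \ref{thm:fibersum} becomes a formal corollary of Theorem \ref{thm:-n-section}. Be aware that the two routes produce a priori different trisections (and different explicit diagrams) of the same type, since the underlying decompositions differ.
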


We will explicitly construct trisection diagrams of $E(n) \# n \mathbb{C}P^2$ and $E(1)_{2,3} \# 2 \mathbb{C}P^2$ after the proofs of the theorems (see Examples \ref{exm:E(n)} and \ref{exm:E(1)_2,3}). Note that the 4-manifold $E(1)_{2,3}$ is an exotic copy of $E(1)$. Here, for smooth 4-manifolds $X$ and $Y$, we say that $Y$ is an exotic copy of $X$ if $Y$ is homeomorphic to $X$ but not diffeomorphic to $X$.

\begin{que*}[Question \ref{que:CP^2}]
Can we explicitly construct trisection diagrams of 4-manifolds admitting achiral Lefschetz fibrations over $S^2$ with a $(-n)$-section from Theorem \ref{thm:-n-section} by removing $\abs{n}$ connected summands of $\mathbb{C}P^2$ or $\overline{\mathbb{C}P^2}$? In particular, what about exotic 4-manifolds such as $E(1)_{2,3}$ in Example \ref{exm:E(1)_2,3}?
\end{que*}

We say that a simply-connected closed 4-manifold $X$ is {\it almost completely decomposable} if $X \# \mathbb{C}P^2$ is diffeomorphic to $k \mathbb{C}P^2 \# \ell \overline{\mathbb{C}P^2}$ for some non-negative integers $k$ and $\ell$. If $X$ in Theorem \ref{thm:-n-section} is almost completely decomposable, we can obtain a trisection diagram of $k \mathbb{C}P^2 \# \ell \overline{\mathbb{C}P^2}$. 

\begin{cor*}[Corollary \ref{cor:ACD}]
Let $X$ be an almost completely decomposable 4-manifold admitting a genus-$p$ achiral Lefschetz fibration over $S^2$ with $m$ singular fibers and a $(-n)$-section, where $n$ is any positive integer. Then, for some non-negative integers $k$ and $\ell$, $k \mathbb{C}P^2 \# \ell \overline{\mathbb{C}P^2}$ admits a $(2p+m+n+5, 2p+1)$-trisection whose corresponding trisection diagram can be constructed explicitly from a monodromy of the Lefschetz fibration of $X$.
\end{cor*}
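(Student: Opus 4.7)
The plan is to derive the corollary essentially as a direct consequence of Theorem \ref{thm:-n-section}, since the hard topological work of producing an explicit trisection diagram from the monodromy has already been done there. Our only additional input is the definition of almost complete decomposability, which allows us to rewrite the diffeomorphism type of the 4-manifold that the trisection describes.

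First, I apply Theorem \ref{thm:-n-section} to the given achiral Lefschetz fibration on $X$ with a $(-n)$-section. Because $n$ is assumed positive, the theorem yields a $(2p+m+n+5,\,2p+1)$-trisection of $X \# n\mathbb{C}P^2$ whose trisection diagram is obtained explicitly from the monodromy factorization of $f \colon X \to S^2$. At this point the trisection diagram is entirely specified; what remains is only to reinterpret which smooth 4-manifold it describes.

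Next, I invoke the hypothesis that $X$ is almost completely decomposable: by definition there exist non-negative integers $k'$ and $\ell$ such that
\[
X \# \mathbb{C}P^2 \;\cong\; k'\,\mathbb{C}P^2 \;\#\; \ell\,\overline{\mathbb{C}P^2}.
\]
Connected-summing both sides with $(n-1)\mathbb{C}P^2$ (which is a vacuous operation when $n=1$) gives
\[
X \# n\mathbb{C}P^2 \;\cong\; (k'+n-1)\,\mathbb{C}P^2 \;\#\; \ell\,\overline{\mathbb{C}P^2}.
\]
Setting $k := k' + n - 1 \geq 0$, the trisection diagram produced in the previous step is therefore a trisection diagram of $k\mathbb{C}P^2 \# \ell\overline{\mathbb{C}P^2}$, with the stated genus parameters $(2p+m+n+5,\,2p+1)$ inherited unchanged.

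There is no substantive obstacle beyond the already proved Theorem \ref{thm:-n-section}: the corollary is a bookkeeping consequence combining that theorem with the defining property of almost complete decomposability. The only point worth flagging is that the diffeomorphism witnessing $X \# \mathbb{C}P^2 \cong k'\mathbb{C}P^2 \# \ell\overline{\mathbb{C}P^2}$ is abstract and non-constructive in general, so while the trisection diagram itself is drawn explicitly from the monodromy, the precise identification of the resulting manifold as $k\mathbb{C}P^2 \# \ell\overline{\mathbb{C}P^2}$ relies on whatever external argument established almost complete decomposability for $X$.
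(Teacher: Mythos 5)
Your proposal is correct and follows essentially the same route as the paper: apply Theorem \ref{thm:-n-section} to obtain the $(2p+m+n+5,2p+1)$-trisection of $X \# n\mathbb{C}P^2$, then use almost complete decomposability to identify $X \# n\mathbb{C}P^2$ with $k\mathbb{C}P^2 \# \ell\overline{\mathbb{C}P^2}$. Your extra step of summing the defining diffeomorphism $X \# \mathbb{C}P^2 \cong k'\mathbb{C}P^2 \# \ell\overline{\mathbb{C}P^2}$ with $(n-1)\mathbb{C}P^2$ just makes explicit what the paper leaves implicit.
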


For example, $E(n) \# n \mathbb{C}P^2$ is diffeomorphic to $(3n-1) \mathbb{C}P^2 \# (10n-1) \overline{\mathbb{C}P^2}$ and $E(1)_{2,3} \# 2 \mathbb{C}P^2$ is diffeomorphic to $3 \mathbb{C}P^2 \# 9 \overline{\mathbb{C}P^2}$.

\begin{que*}[Question \ref{que:stabili}]
Is a trisection diagram constructed in Corollary \ref{cor:ACD} related to the standard trisection diagram of $k \mathbb{C}P^2 \# \ell \overline{\mathbb{C}P^2}$ shown in Figure \ref{fig:CP^2s} by surface diffeomorphisms, handle slides among the same family curves and destabilizations? In particular, what about trisection diagrams in Examples \ref{exm:E(n)} and \ref{exm:E(1)_2,3}?
\end{que*}

This paper is organized as follows: In Section \ref{sec:preliminaries}, we recall some notions used in the main theorems such as trisections and Lefschetz fibrations. In Section \ref{sec:main theorem}, we show the main theorems and explicitly construct several trisection diagrams for concrete examples of the main theorems. We also discuss the questions raised in the introduction.

\section*{Acknowledgement}

The authors would like to thank Nobutaka Asano and Hisaaki Endo for their helpful comments. The first author was partially supported by JSPS KAKENHI Grant Number JP25KJ0301. The second author was partially supported by JST OU-SPRING, and the Public Interest Incorporated Foundation ”Ohmoto ikueikai” in carrying out this research.

\section{Preliminaries}\label{sec:preliminaries}

In this paper, unless otherwise stated, each 4-manifold is compact, connected, oriented and smooth. The notation $X \cong Y$ indicates that two manifolds $X$ and $Y$ are diffeomorphic.

\subsection{Achiral Lefschetz fibrations}

In order to review (achiral) Lefschetz fibrations, we begin by recalling some basic notions about mapping class groups and open book decompositions.

Let $F$ be a compact oriented surface with (possibly empty) boundary. 
The \emph{mapping class group} $\operatorname{Mod}(F)$ is the group of isotopy classes of orientation–preserving diffeomorphisms of $F$ that restrict to the identity on $\partial F$. 

For a simple closed curve $\gamma \subset \operatorname{int}(F)$, the (right–handed) \emph{Dehn twist} about $\gamma$, denoted by $t_\gamma$, is represented by a diffeomorphism supported in an annular neighborhood of $\gamma$ that rotates once in the positive direction.
We compose mapping classes functionally: that is, $t_{\gamma_2}t_{\gamma_1}$ means apply $t_{\gamma_1}$ first, then $t_{\gamma_2}$.

An \emph{open book decomposition} of a closed oriented $3$-manifold $M$ is a pair $(B,\pi)$ consisting of a link $B \subset M$ (the \emph{binding}) and a fibration
\[
\pi \colon M \setminus B \to S^1
\]
such that each fiber $\pi^{-1}(\theta)$ is the interior of a compact surface $F_\theta$ with $\partial F_\theta = B$.
The surface $F=F_\theta$ is called a \emph{page} of the open book decomposition, and the isotopy class of the return map $F \to F$ (the \emph{monodromy}) determines the open book decomposition up to equivalence.

It is well known that every open book decomposition with page $F$ and monodromy $\phi$ determines an element $\phi \in \operatorname{Mod}(F)$, and conversely each element of $\operatorname{Mod}(F)$ determines an open book  decomposition by taking the mapping torus and gluing along $\partial F$. 

\begin{dfn}\label{def:achiral Lefschetz fibration}

Let $X$ be a 4–manifold (possibly with non–empty boundary) and $\Sigma$ a compact oriented surface (possibly with boundary).
A smooth map $f \colon X \to \Sigma$ is called a \textit{(possibly achiral) Lefschetz fibration} 
if there exist finitely many points $b_1,\dots,b_m \in \operatorname{int}(\Sigma)$ such that
$\mathrm{Critv}(f):=\{b_1,\dots,b_m\}$ is the set of critical values of $f$, and for each $i$ there exists a unique critical point $p_i \in f^{-1}(b_i)$ and oriented local complex coordinates $(z_1,z_2)$ in a neighborhood of $p_i$ and a complex coordinate $w$ in a neighborhood of $b_i$, compatible with the orientations of $X$ and $\Sigma$, respectively, such that
\[
f(z_1,z_2)=z_1^2+z_2^2 \quad \text{(chiral singularity)}
\]
or
\[
f(z_1,z_2)=z_1^2+\overline{z}_2^{\,2} \quad \text{(achiral singularity)}.
\]
\end{dfn}
It follows that
\[
f\big|_{\,f^{-1}(\Sigma\setminus\{b_1,\dots,b_m\})}\colon f^{-1}\!\big(\Sigma\setminus\{b_1,\dots,b_m\}\big)\to \Sigma\setminus\{b_1,\dots,b_m\}
\]
is a smooth fiber bundle with fiber a compact oriented surface $F$, possibly with boundary (see, \cite{Ehr51}). 
In particular, when $\Sigma=D^2$, the restriction $f\big|_{\,f^{-1}(\partial D^2)}$ is an $F$-bundle over $S^1$; consequently $\partial X$ carries an open book decomposition with binding $\partial F$ and page $F$.

Moreover, if a $4$-manifold $X$ admits a Lefschetz fibration $f\colon X\to D^2$ with regular fiber $F$, then the restriction $f|_{f^{-1}(\partial D^2)}$ induces an open book decomposition of $\partial X$ with page $F$ and monodromy given by the product of Dehn twists corresponding to vanishing cycles of $f$.

In this definition, the regular fiber $F$ may have boundary. 
When we refer to a \emph{genus–$p$ Lefschetz fibration}, 
we mean that the regular fiber is a closed surface of genus $p$; 
in the case where the fiber has non–empty boundary, we will always specify the regular fiber explicitly.

This definition follows Akbulut-Ozbagci~\cite{AO01}, 
extended to allow achiral singularities as in Gompf-Stipsicz~\cite{GS99}.
For background on Stein surface constructions see Gompf~\cite{Gompf98}, 
and for the relation with open book decompositions see Giroux~\cite{Giroux02}.

\subsection{Monodromy representations and factorizations}

Let $f\colon X\to\Sigma$ be a (possibly achiral) Lefschetz fibration with regular fiber $F$.
It is known \cite{MR1555438, MR881797} that the mapping class group $\operatorname{Mod}(F)$ is generated by Dehn twists about simple closed curves in $F$. 
Let 
$\mathrm{Critv}(f):=\{b_1,\dots,b_m\}\subset\operatorname{int}(\Sigma)$ be the set of critical values of $f$.
Fix a basepoint $*\in\Sigma\setminus \mathrm{Critv}(f)$ and an identification of the fiber $F_*:=f^{-1}(*)$ with $F$ by an orientation–preserving diffeomorphism that is the identity on $\partial F$.
Parallel transport along loops based at $*$ defines the \emph{monodromy representation}
\[
\rho\;\colon\;\pi_1(\Sigma\setminus \mathrm{Critv}(f),*)\to \operatorname{Mod}(F),
\]
where $\operatorname{Mod}(F)$ denotes the mapping class group of $F$ fixing $\partial F$ pointwise.

\begin{dfn}
A \emph{distinguished system of vanishing paths} is a collection of embedded arcs 
$\alpha_1,\dots,\alpha_m\subset\Sigma$ with endpoints in $\{*,b_1,\dots,b_m\}$ such that:
\begin{itemize}
  \item each $\alpha_i$ joins $*$ to $b_i$ and is disjoint from $\mathrm{Critv}(f)$ in its interior,
  \item the interiors of the $\alpha_i$ are pairwise disjoint, meeting only at $*$,
  \item the $\alpha_i$ are ordered according to the positive (counterclockwise) order of their tangent directions at $*$.
\end{itemize}
\end{dfn}

For each $i$, parallel transport of $F_*$ along $\alpha_i$ determines a properly embedded \emph{vanishing thimble} 
$\Delta_i\subset X$ and a simple closed curve $\gamma_i\subset F$ called the \emph{vanishing cycle}. 
If the critical point over $b_i$ is chiral (positive), the local monodromy is $t_{\gamma_i}$; 
if it is achiral (negative), the local monodromy is $t_{\gamma_i}^{-1}$.
We encode this by an exponent $\varepsilon_i\in\{+1,-1\}$ so that the local monodromy is $t_{\gamma_i}^{\varepsilon_i}$.

Choosing a small circle $\ell$ around $\mathrm{Critv}(f)$ that is positively oriented and based at $*$, and denoting by 
$\mu_i\in\pi_1(\Sigma\setminus \mathrm{Critv}(f),*)$ the standard simple loop that winds once positively around $b_i$ and no other $b_j$, in the order prescribed by the distinguished system, one has
\[
\rho(\mu_i)=t_{\gamma_i}^{\varepsilon_i}\in \operatorname{Mod}(F) \qquad (i=1,\dots,m).
\]
Hence the global monodromy along $\ell$ is the product
\[
\rho(\ell)\;=\;\rho(\mu_m)\cdots\rho(\mu_1)\;=\;t_{\gamma_m}^{\varepsilon_m}\cdots t_{\gamma_1}^{\varepsilon_1}\in \operatorname{Mod}(F).
\]
In particular, if $\Sigma=D^2$, then $\ell=\partial D^2$ and the induced open book  decomposition on $\partial X$ has monodromy
\[
\phi\;=\;t_{\gamma_m}^{\varepsilon_m}\cdots t_{\gamma_1}^{\varepsilon_1}\in \operatorname{Mod}(F).
\]
If $\Sigma=S^2$, then $\ell$ is null–homotopic in $\Sigma\setminus \mathrm{Critv}(f)$ and therefore 
\[
t_{\gamma_m}^{\varepsilon_m}\cdots t_{\gamma_1}^{\varepsilon_1}\;=\;\operatorname{id}\in \operatorname{Mod}(F).
\]

\begin{dfn}\label{def:Monodromy factorization}
A \emph{monodromy factorization} (with achiral singularities allowed) of a mapping class $\phi\in \operatorname{Mod}(F)$ is a word
\[
\phi\;=\;t_{\gamma_m}^{\varepsilon_m}\cdots t_{\gamma_1}^{\varepsilon_1} \ 
(\gamma_i\subset F\text{: simple closed curves},\ \varepsilon_i\in\{+1,-1\})
\]
arising from a Lefschetz fibration $f\colon X\to D^2$ with regular fiber $F$ as above. 
When all $\varepsilon_i=+1$, the factorization is called \emph{positive}.
\end{dfn}

Different choices of distinguished systems and identifications produce equivalent factorizations in the following sense.

\begin{dfn}\label{def:Hurwitz equivalence}
Two factorizations 
$t_{\gamma_m}^{\varepsilon_m}\cdots t_{\gamma_1}^{\varepsilon_1}$ and 
$t_{\gamma'_m}^{\varepsilon'_m}\cdots t_{\gamma'_1}^{\varepsilon'_1}$ in $\operatorname{Mod}(F)$ are \emph{Hurwitz equivalent} if one can be obtained from the other by a finite sequence of the following \emph{Hurwitz moves}: for some $i$,
\[
\big(\,\cdots,\;t_{\gamma_{i+1}}^{\varepsilon_{i+1}},\;t_{\gamma_{i}}^{\varepsilon_{i}},\;\cdots\,\big)
\;\longmapsto\;
\big(\,\cdots,\;t_{\gamma_{i}}^{\varepsilon_{i}},\;t_{t_{\gamma_{i}}^{\varepsilon_{i}}(\gamma_{i+1})}^{\varepsilon_{i+1}},\;\cdots\,\big),
\]
where $t_{\gamma}^{\pm1}$ denotes the (positive/negative) Dehn twist about $\gamma$.
\end{dfn}

\begin{dfn}\label{def:global conjugation}
Two factorizations $t_{\gamma_m}^{\varepsilon_m}\cdots t_{\gamma_1}^{\varepsilon_1}$ and $t_{\gamma_m'}^{\varepsilon_m'}\cdots t_{\gamma_1'}^{\varepsilon_1'}$ in $\operatorname{Mod}(F)$,
where each $\varepsilon_i,\varepsilon_i' \in \{+1,-1\}$, are \emph{globally conjugate} if there exists
$h\in \operatorname{Mod}(F)$ such that, for every $i=1,\dots,m$,
\[
t_{\gamma_i'}^{\varepsilon_i'} \;=\; h\, t_{\gamma_i}^{\varepsilon_i}\, h^{-1}.
\]
Equivalently,
\[
h\Big(t_{\gamma_m}^{\varepsilon_m}\cdots t_{\gamma_1}^{\varepsilon_1}\Big)h^{-1}
=
\big(h t_{\gamma_m}^{\varepsilon_m} h^{-1}\big)\cdots
\big(h t_{\gamma_1}^{\varepsilon_1} h^{-1}\big).
\]
\end{dfn}

\begin{prop}
Up to isomorphism a Lefschetz fibration $f\colon X\to D^2$ with regular fiber $F$ determines a monodromy factorization of $\phi\in \operatorname{Mod}(F)$ that is unique up to Hurwitz equivalence and global conjugation.
Conversely, any factorization in $\operatorname{Mod}(F)$ of the form 
$\phi=t_{\gamma_m}^{\varepsilon_m}\cdots t_{\gamma_1}^{\varepsilon_1}$ gives rise to a (possibly achiral) Lefschetz fibration over $D^2$ with regular fiber $F$ and vanishing cycles $\gamma_i$ of signs $\varepsilon_i$.
\end{prop}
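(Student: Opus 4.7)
The plan is to handle the two directions of the proposition separately, and in both cases to rely on the setup already established in the excerpt. For the forward direction, the construction of a factorization is essentially already in place: given a Lefschetz fibration $f\colon X\to D^2$, one picks a distinguished system of vanishing paths $\alpha_1,\dots,\alpha_m$ and an identification $F_*\cong F$, and parallel transport together with the local normal form at each critical point yields the word $\phi = t_{\gamma_m}^{\varepsilon_m}\cdots t_{\gamma_1}^{\varepsilon_1}$ as in the discussion preceding Definition \ref{def:Monodromy factorization}. The substantive content is therefore the uniqueness up to Hurwitz equivalence and global conjugation.

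For uniqueness, I would analyze the two kinds of choices separately. Changing the identification $F_*\cong F$ by a mapping class $h\in\operatorname{Mod}(F)$ transports each vanishing cycle $\gamma_i$ to $h(\gamma_i)$ and hence each factor $t_{\gamma_i}^{\varepsilon_i}$ to $h\, t_{\gamma_i}^{\varepsilon_i} h^{-1} = t_{h(\gamma_i)}^{\varepsilon_i}$, which is precisely the relation of global conjugation in Definition \ref{def:global conjugation}. Changing the distinguished system is handled via the standard fact that any two Hurwitz systems on a punctured disk are connected by a finite sequence of elementary swaps of two adjacent arcs at $*$; at the level of $\pi_1(D^2\setminus\mathrm{Critv}(f),*)$, such a swap replaces the meridian $\mu_{i+1}$ by $\mu_i\mu_{i+1}\mu_i^{-1}$. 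Applying $\rho$ and using the conjugation identity $t_{h(\gamma)}^{\varepsilon}=h\, t_{\gamma}^{\varepsilon}\, h^{-1}$ converts this into exactly the Hurwitz move of Definition \ref{def:Hurwitz equivalence}.

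For the converse, the plan is the standard handlebody construction. Start with the trivial bundle $F\times D^2$, pick a basepoint $*\in\partial D^2$, and fix disjoint properly embedded arcs $\alpha_1,\dots,\alpha_m$ from $*$ to chosen interior points $b_1,\dots,b_m$ in the prescribed order at $*$. For each $i$, parallel transport a copy of $\gamma_i$ along $\alpha_i$ to a fiber near $b_i$ and attach a 2-handle along it with framing $-\varepsilon_i$ relative to the fiber framing. Comparing this attaching data with the boundary of the Milnor fiber of $z_1^2+z_2^2$ or $z_1^2+\overline{z}_2^{\,2}$, a local-model computation confirms that each such 2-handle introduces a chiral (respectively achiral) Lefschetz singularity whose vanishing cycle is $\gamma_i$; thus the resulting smooth map extends to a Lefschetz fibration $X\to D^2$ realizing the prescribed factorization.

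The step I expect to be the main obstacle is aligning conventions. One must verify that a topological swap of two adjacent arcs in $D^2$ corresponds to exactly the algebraic Hurwitz move of Definition \ref{def:Hurwitz equivalence} (not, say, its inverse or its reflection), and that the framing $-\varepsilon_i$ in the converse really produces a right-handed Dehn twist when $\varepsilon_i=+1$ rather than its inverse. Once these sign and handedness conventions are pinned down, the remaining ambiguities—isotopies of the $\alpha_i$ away from $\mathrm{Critv}(f)$ and rechoices of the basepoint on $\partial D^2$—are absorbed without extra moves, yielding a well-defined bijection between isomorphism classes of Lefschetz fibrations over $D^2$ and equivalence classes of monodromy factorizations.
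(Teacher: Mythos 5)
The paper states this proposition without proof, treating it as a standard fact, so there is no in-paper argument to compare against; your outline is the standard one and is fully consistent with the machinery the paper itself sets up (distinguished vanishing paths and the monodromy representation for existence, change of fiber identification giving global conjugation and change of distinguished system giving Hurwitz moves for uniqueness, and $2$-handle attachment with page framing $-\varepsilon_i$ for the converse, exactly as in the paper's handle-diagram recipe). The approach is correct; the only delicate point, which you already flag, is matching the orientation/sign conventions so that the arc swap yields the paper's Hurwitz move rather than its inverse.
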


\subsection{$(-n)$-sections and monodromy factorizations}

Suppose that a Lefschetz fibration $f\colon X\to \Sigma$ with regular fiber $F$ admits a section $S$ of self-intersection $-n$.
Then the boundary $\partial \nu(S)$ of a tubular neighborhood of $S$ is diffeomorphic to the circle bundle over $\Sigma$ with Euler number $-n$.
In particular, when $\Sigma=D^2$, the boundary component corresponding to $S$ is the circle bundle over $D^2$ restricted to its boundary $S^1$, with Euler number $-n$.

On the level of mapping class groups, this condition forces the global monodromy around $\partial\Sigma$ to lie in the subgroup of $\operatorname{Mod}(F)$ that fixes a chosen boundary component of $F$ and acts as the $n$-fold Dehn twist about that boundary component.
Equivalently, if we regard $F$ as a surface with one distinguished boundary component $\delta$ corresponding to the section, then any monodromy factorization of $f$ has the form
\[
t_{\gamma_m}^{\varepsilon_m}\cdots t_{\gamma_1}^{\varepsilon_1}\;=\;t_\delta^n \in \operatorname{Mod}(F),
\]
where $\gamma_i\subset F$ are vanishing cycles and $\varepsilon_i\in\{+1,-1\}$ record the chirality of the singularities.
Thus the presence of a $(-n)$-section is encoded in the monodromy factorization as the boundary multitwist $t_\delta^n$.

In particular:
\begin{itemize}
  \item If $n=1$, a $(-1)$-section corresponds to the relation
  \[
  t_{\gamma_m}^{\varepsilon_m}\cdots t_{\gamma_1}^{\varepsilon_1} = t_\delta
  \]
  in $\operatorname{Mod}(F)$.
  Such factorizations arise naturally when blowing up base points of Lefschetz pencils, since the exceptional spheres give disjoint $(-1)$-sections.
  \item More generally, a collection of disjoint $(-1)$-sections gives rise to a product of boundary twists, one for each section, in the global monodromy relation; for example, $t_{\gamma_m}^{\varepsilon_m}\cdots t_{\gamma_1}^{\varepsilon_1} = t_{\delta_k}\cdots t_{\delta_1}$.
\end{itemize}

\subsection{Fiber sums and twisted fiber sums}

Let $f_i \colon X_i \to \Sigma_i$ ($i=1,2$) be (possibly achiral) Lefschetz fibrations 
with the same regular fiber $F$, where $F$ is a compact oriented surface (possibly with boundary).
Fix identifications of the fibers with a reference surface $F_0$ by orientation–preserving diffeomorphisms
\[
\phi_i \colon F \;\;\xrightarrow{\;\cong\;}\; F_0.
\]

\begin{dfn}\label{def:fiber-sum}
The \emph{fiber sum} $X_1 \#_F X_2$ is obtained by choosing orientation–reversing diffeomorphisms
\[
\psi \colon \nu(F)\subset X_1 \;\;\xrightarrow{\;\cong\;}\;\nu(F)\subset X_2
\]
between tubular neighborhoods of regular fibers, which identify the fibers via the fixed diffeomorphisms $\phi_i$.
Removing the interiors of $\nu(F)$ from each $X_i$ and gluing along their common boundary via $\psi$
produces a new Lefschetz fibration
\[
f_1 \#_F f_2 \colon X_1 \#_F X_2 \;\to\; \Sigma_1 \# \Sigma_2
\]
with regular fiber $F_0$.
\end{dfn}

In this definition, the gluing map $\psi$ is assumed to restrict to the identity on the fiber $F_0$.  
More generally, one may allow a nontrivial element of the mapping class group of $F_0$.

\begin{dfn}\label{def:twisted-fiber-sum}
Given Lefschetz fibrations $f_i \colon X_i \to \Sigma_i$ ($i=1,2$) with regular fiber $F$,
and an element $h\in \operatorname{Mod}(F)$, the \emph{twisted fiber sum}
\[
X_1 \#_h X_2
\]
is obtained as above, except that the gluing map $\psi$ identifies the fibers by $h\colon F_0\to F_0$.
The resulting fibration $f_1 \#_h f_2$ has base $\Sigma_1 \# \Sigma_2$ and regular fiber $F_0$.
\end{dfn}

Note that in the case $h=\operatorname{id}$ we recover the ordinary fiber sum.
In particular, when the bases are $S^2$, the base of the resulting fibration is again $S^2$,
and the global monodromy factorization is given by the concatenation of the factorizations of $f_1$ and $f_2$,
up to global conjugation by $h$.
Here we record the standard convention for the action of $h$ on a
monodromy factorization.
If
\[
W = t_{\gamma_k}\cdots t_{\gamma_1}
\]
is a word of Dehn twists, we define
\[
h(W) := t_{h(\gamma_k)}\cdots t_{h(\gamma_1)}.
\]
Thus $h$ acts on $W$ by applying $h$ to each vanishing cycle.
Note that this is not conjugation, but the push-forward
of vanishing cycles under $h$.
This is the standard convention in the context of twisted fiber sums.
In particular, if \(W_i\) \((i=1,2)\) are monodromy factorizations of
\(f_i\), then the monodromy of the twisted fiber sum 
\(f_1 \#_{h} f_2\) is
\[
h(W_1)\cdot W_2.
\]

\subsection{Handle diagrams arising from monodromy factorizations of Lefschetz fibrations over $D^2$}\label{subsec:handle diagram}
We recall the standard procedure to recover a handle diagram of the total space $X$
from a (possibly achiral) monodromy factorization
\[
\phi \;=\; t_{\gamma_m}^{\varepsilon_m}\cdots t_{\gamma_1}^{\varepsilon_1}\in \operatorname{Mod}(F)\ 
(\gamma_i\subset F \text{ simple closed curve},\ \varepsilon_i\in\{\pm1\})
\]
of a Lefschetz fibration $f\colon X\to D^2$ with regular fiber $F$.

\subsubsection*{Step 0 (Construct the base handlebody).}
Start from the product $F\times D^2$.
In handle calculus this is drawn as one $0$-handle together with $1$-handles encoding $F$:
if $F=\Sigma_{p,b}$ (genus $p$ with $b$ boundary components), take $2p+b-1$ dotted $1$-handles (``Akbulut-Kirby'' page presentation).
The boundary $\partial(F\times D^2)$ carries the open book decomposition with page $F$ and trivial monodromy \cite[§§5.4, 8]{GS99}, \cite[§1]{AO01}.

\subsubsection*{Step 1 (Place the attaching circles from vanishing cycles).}
For each vanishing cycle $\gamma_i\subset F$ appearing in the factorization, realize the attaching circle
\[
\Lambda_i\ :=\ \gamma_i\times\{p_i\}\ \subset\ \partial(F\times D^2),
\]
on a chosen page copy of $F\subset \partial(F\times D^2)$, where $p_i\in\partial D^2$ are pairwise distinct points (only used to separate the attachments).
Draw $\Lambda_i$ on the page so that it represents the isotopy class of $\gamma_i$ in $F$.

\subsubsection*{Step 2 (Assign framings).}
Attach a $2$-handle along each $\Lambda_i$ with framing \emph{relative to the page framing}
\[
\operatorname{fr}(\Lambda_i)=
\begin{cases}
-1, & \text{if } \varepsilon_i=+1 \ \text{(positive/chiral singularity)},\\[2pt]
+1, & \text{if } \varepsilon_i=-1 \ \text{(negative/achiral singularity)}.
\end{cases}
\]
Equivalently, in the usual handle diagram the coefficient is $-1$ for positive nodes and $+1$ for negative nodes \cite[§§8.1-8.3]{GS99}, \cite[§1]{AO01}.

\subsubsection*{Step 3 (Boundary open book decomposition and monodromy).}
The handlebody obtained from $F\times D^2$ by these $2$-handle attachments is diffeomorphic to the total space $X$.
Its boundary inherits the open book decomposition whose monodromy is the product
$t_{\gamma_m}^{\varepsilon_m}\cdots t_{\gamma_1}^{\varepsilon_1}\in \operatorname{Mod}(F)$.

\subsubsection*{Step 4 (Interpret Hurwitz moves as Kirby moves).}
Hurwitz moves on the factorization correspond to Kirby moves:
\begin{itemize}
  \item The elementary Hurwitz move
  $(\cdots,t_{\gamma_{i+1}}^{\varepsilon_{i+1}},t_{\gamma_{i}}^{\varepsilon_{i}},\cdots)
  \mapsto
  (\cdots,t_{\gamma_{i}}^{\varepsilon_{i}},t_{t_{\gamma_{i}}^{\varepsilon_{i}}(\gamma_{i+1})}^{\varepsilon_{i+1}},\cdots)$
  is implemented by sliding the $2$-handle on $\Lambda_i$ over that on $\Lambda_{i+1}$.
  \item Global conjugation by $h\in \operatorname{Mod}(F)$ is realized by a diffeomorphism of the page $F$ extending over $F\times D^2$ (ambient isotopy of the diagram).
\end{itemize}
Thus the handle diagram is well defined up to handle slides and isotopies exactly in the indeterminacy of Hurwitz equivalence and global conjugation.

\subsection{Trisections and relative trisections}

We first recall the definition and some properties of trisections of closed 4-manifolds that were introduced by Gay and Kirby \cite{MR3590351} as a 4-dimensional analogue of Heegaard splittings.

\begin{dfn}
Let $X$ be a closed 4-manifold and $g$ and $k_i$ non-negative integers with $k_i \le g$ for $i=1,2,3$. A $(g;k_1,k_2,k_3)$-\textit{trisection} of $X$ is a 3-tuple $(X_1,X_2,X_3)$ satisfying the following conditions:
\begin{itemize}
\item $X=X_1 \cup X_2 \cup X_3$,
\item For each $i=1,2,3$, $X_i \cong \natural_{k_i} S^1 \times D^3$,
\item For each $i=1,2,3$, $X_i \cap X_{i+1} \cong \natural_{g} S^1 \times D^2$, where $X_4=X_1$ and 
\item $X_1 \cap X_2 \cap X_3 \cong \#_{g} S^1 \times S^1 = \Sigma_g$.
\end{itemize}
\end{dfn}

Let $H_{\alpha} = X_3 \cap X_1$, $H_{\beta} = X_1 \cap X_2$ and $H_{\gamma} = X_2 \cap X_3$. The union $H_{\alpha} \cup H_{\beta} \cup H_{\gamma}$ is called the \textit{spine} of the trisection. A trisection is uniquely determined by its spine \cite{MR3590351}. The 4-tuple $(g;k_1,k_2,k_3)$ is called the \textit{type} of the trisection. Note that if $k_1=k_2=k_3$, the trisection is said to be \textit{balanced}, and the type is denoted by $(g,k)$ for short, where $k=k_i$ ($i=1,2,3$). A $(g,k)$-trisection of a closed 4-manifold $X$ is also called a \textit{genus-g trisection} since $\chi(X)=2+g-3k$, where $\chi(X)$ is the Euler characteristic of $X$. Each $X_i$ is called a \textit{sector}.

\begin{dfn}\label{def:trisection diagram}
A 4-tuple $(\Sigma_g;\alpha,\beta,\gamma)$ is called a $(g;k_1,k_2,k_3)$-\textit{trisection diagram} if the following holds:
\begin{itemize}
\item $(\Sigma_g;\alpha,\beta)$ is a Heegaard diagram of $\#_{k_1}S^1 \times S^2$,
\item $(\Sigma_g;\beta,\gamma)$ is a Heegaard diagram of $\#_{k_2}S^1 \times S^2$ and 
\item $(\Sigma_g;\gamma,\alpha)$ is a Heegaard diagram of $\#_{k_3}S^1 \times S^2$.
\end{itemize}
\end{dfn}

Similar to the type of a trisection, the 4-tuple of non-negative integers $(g;k_1,k_2,k_3)$ is called the \textit{type} of the trisection diagram. The curves $\alpha$, $\beta$ and $\gamma$ are drawn in red, blue and green, respectively.

\begin{exm}
The left (resp. right) figure in Figure \ref{fig:CP^2} is a $(1,0)$-trisection diagram of $\mathbb{C}P^2$ (resp. $\overline{\mathbb{C}P^2}$). Figure \ref{fig:CP^2s} is the standard $(k+\ell,0)$-trisection diagram of $k \mathbb{C}P^2 \# \ell \overline{\mathbb{C}P^2}$.
\end{exm}

\begin{figure}[htbp]
  \centering
  \begin{minipage}{0.45\textwidth}
    \centering
    \includegraphics[width=\textwidth]{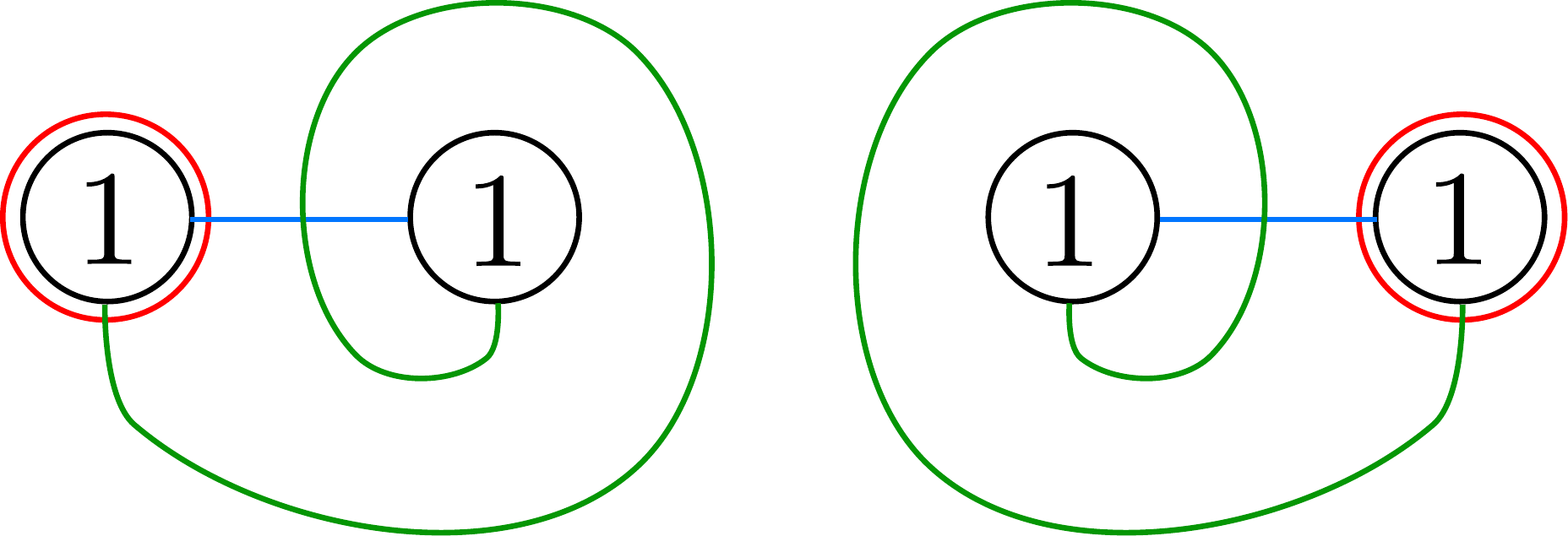}
    \setlength{\captionmargin}{2pt}
    \caption{(Left) A $(1,0)$-trisection diagram of $\mathbb{C}P^2$. (Right) A $(1,0)$-trisection diagram of $\overline{\mathbb{C}P^2}$.}
    \label{fig:CP^2}
  \end{minipage}
  \hfill
  \begin{minipage}{0.45\textwidth}
    \centering
    \includegraphics[width=\textwidth]{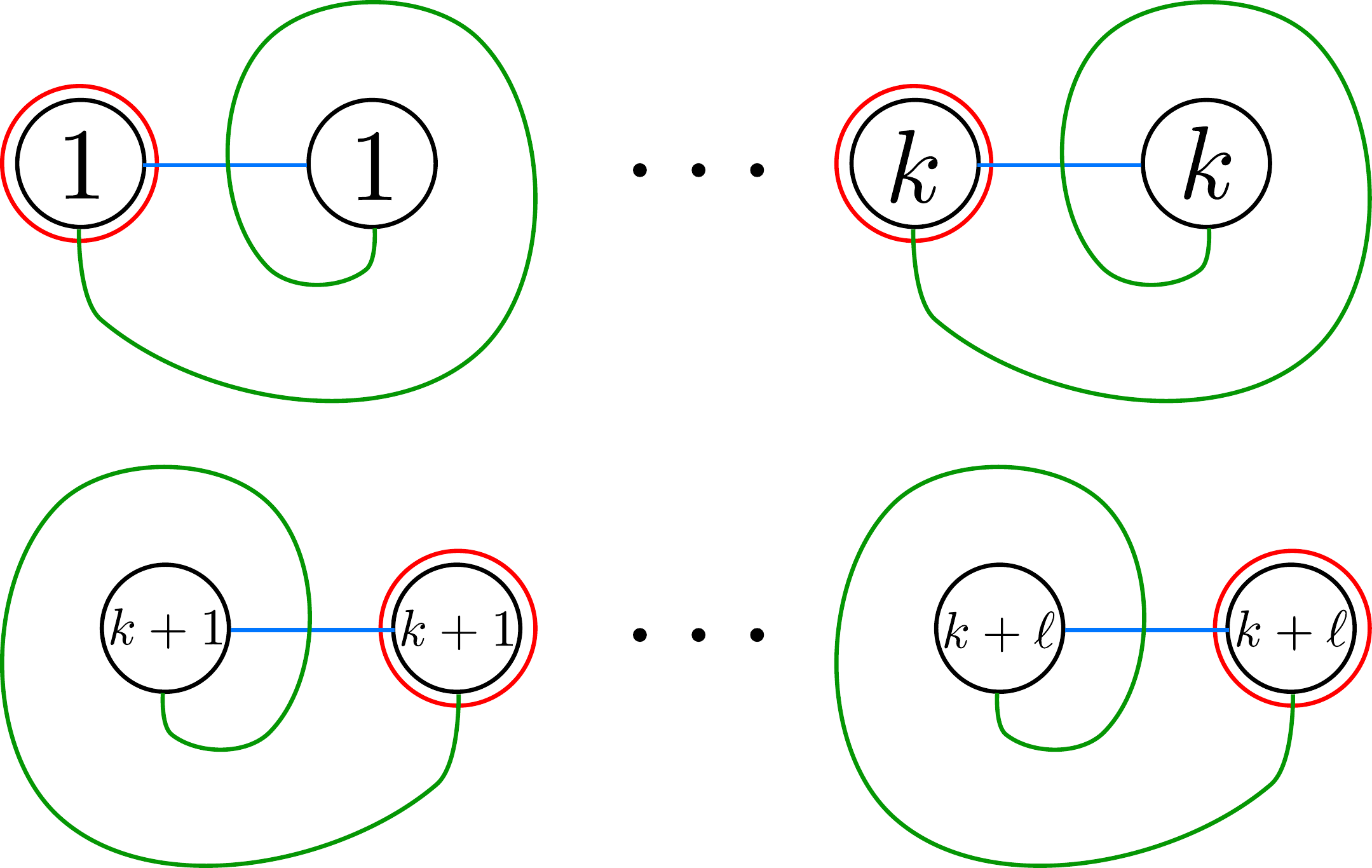}
    \setlength{\captionmargin}{5pt}
    \caption{The standard $(k+\ell,0)$-trisection diagram of $k \mathbb{C}P^2 \# \ell \overline{\mathbb{C}P^2}$.}
    \label{fig:CP^2s}
  \end{minipage}
\end{figure}

\begin{rem}\label{rem:correspond}
Given a genus-$g$ trisection whose spine is $H_{\alpha} \cup H_{\beta} \cup H_{\gamma}$, let $\alpha$ (resp. $\beta$, $\gamma$) be the boundary of meridian disk systems of $H_\alpha$ (resp. $H_\beta$, $H_\gamma$). Then, $(\Sigma_g;\alpha,\beta,\gamma)$ is the trisection diagram with respect to the trisection. Conversely, given a trisection diagram $(\Sigma_g;\alpha,\beta,\gamma)$, by attaching 2-handles to $\Sigma_g \times D^2$ along $\alpha \times \{e^{\frac{2{\pi}i}{3}}\}$, $\beta \times \{e^{\frac{4{\pi}i}{3}}\}$ and $\gamma \times \{e^{2{\pi}i}\}$ with surface framing, we can construct the trisected 4-manifold corresponding to the trisection diagram. Note that in the last process, we have only constructed the spine of the trisection since a trisection is uniquely determined by its spine.
\end{rem}

\begin{dfn}\label{def:stabili}
Let $(X_1,X_2,X_3)$ be a trisection and $C_{ij}$ a boundary-parallel arc properly embedded in $X_i \cap X_j$. For $\{i,j,k\}=\{1,2,3\}$, we define $X_i^{'}$, $X_j^{'}$ and $X_k^{'}$ as follows:
\begin{itemize}
\item $X_i^{'}=X_i-\nu(C_{ij})$,
\item $X_j^{'}=X_j-\nu(C_{ij})$ and
\item $X_k^{'}=X_k \cup \overline{\nu(C_{ij})}$,
\end{itemize}
where $\nu(C_{ij})$ is a tubular neighborhood of $C_{ij}$. 
The replacement of $(X_1,X_2,X_3)$ by $(X_1^{'},X_2^{'},X_3^{'})$ above is called the $k$-\textit{stabilization}. The reverse operation is called the \textit{k-destabilization}.
\end{dfn}

\begin{dfn}
A \textit{stabilization} of a trisection diagram is the connected-sum of the trisection diagram with the genus-1 trisection diagram of $S^4$ shown in Figure \ref{fig:stabilizationfordiagram}.
The reverse operation is called a \textit{destabilization}.
\end{dfn}

\begin{figure}[h]
\begin{center}
\includegraphics[width=8cm, height=3cm, keepaspectratio, scale=1]{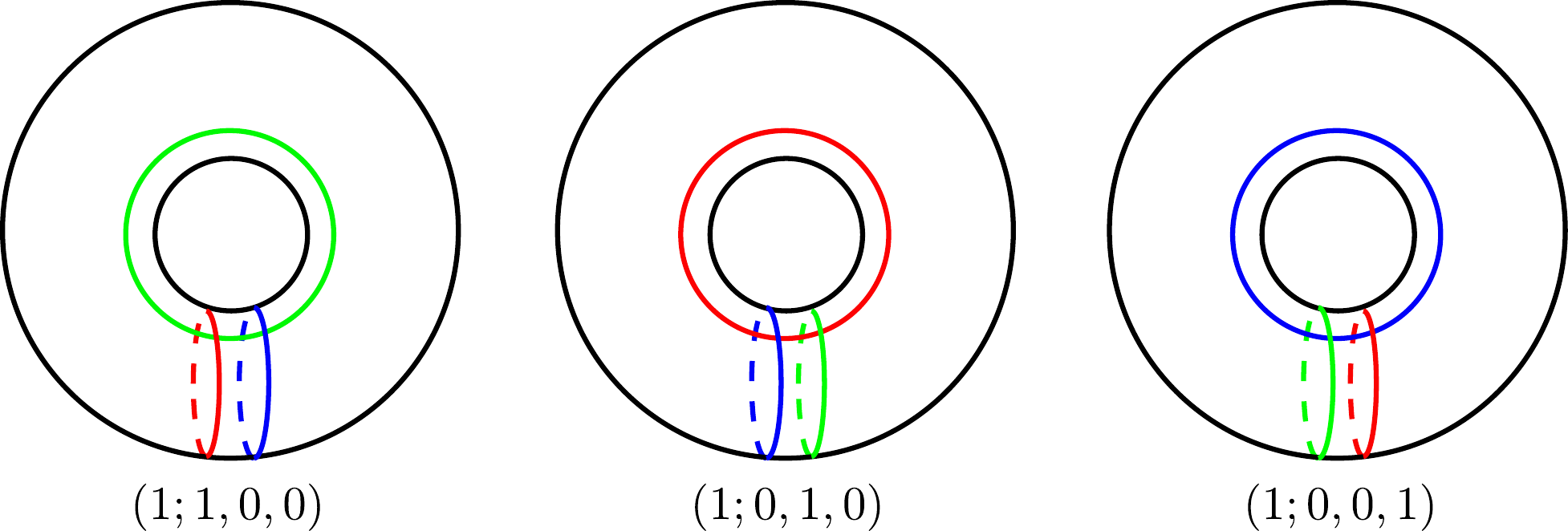}
\end{center}
\setlength{\captionmargin}{50pt}
\caption{Genus-1 trisection diagrams of $S^4$. From left to right, these correspond to the 1-, 2-, and 3-stabilizations.}
\label{fig:stabilizationfordiagram}
\end{figure}

Note that this (de)stabilization is the unbalanced one. Taking the connected sum with the leftmost (resp. center, rightmost) figure in Figure \ref{fig:stabilizationfordiagram} corresponds to the 1- (resp. 2-, 3-) stabilization in Definition \ref{def:stabili}.

\begin{dfn}
For diffeomorphic closed 4-manifolds $X$ and $Y$, two trisections $(X_1, X_2, X_3)$ of $X$ and $(Y_1,Y_2,Y_3)$ of $Y$ are \textit{diffeomorphic} if there exists a diffeomorphism $h \colon X \to Y$ such that $h(X_i)=Y_i$ for each $i=1,2,3$. 
Two trisections $(X_1, X_2, X_3)$ and $(Y_1,Y_2,Y_3)$ of the same closed 4-manifold $Z$ are \textit{isotopic} if there exists an isotopy $\{h_t \colon Z \to Z\}_{t \in [0,1]}$ such that $h_0=\operatorname{id}_{Z}$ and $h_1(X_i)=Y_i$ for each $i=1,2,3$. 
\end{dfn}

\begin{thm}[{\cite[Theorem 4, Theorem 11]{MR3590351}}]
Every closed 4-manifold admits a trisection. Any two trisections of the same closed 4-manifold are isotopic after performing some number of stabilizations.
\end{thm}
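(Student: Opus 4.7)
The plan is to prove both parts via the technology of Morse $2$-functions, following the framework of Gay and Kirby \cite{MR3590351}.

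For existence, I would first fix a handle decomposition of $X$ with a single $0$-handle, some $1$-handles, some $2$-handles, some $3$-handles, and a single $4$-handle. The naive attempt to set $X_1$ equal to the union of $0$- and $1$-handles, $X_2$ to the $2$-handles, and $X_3$ to the $3$- and $4$-handles fails because $X_2$ need not be a $4$-dimensional $1$-handlebody. The fix is to produce, by a generic homotopy argument, a Morse $2$-function $F\colon X\to D^2$ with only indefinite folds and cusps, and then to arrange its critical image as three radial sectors meeting at the origin of $D^2$. The preimages of the three sectors then supply the candidates $X_1,X_2,X_3$, each diffeomorphic to $\natural_{k_i}S^1\times D^3$ by inspection of the fold data along each sector, the preimages of the separating rays give the genus-$g$ pieces $X_i\cap X_{i+1}\cong \natural_g S^1\times D^2$, and the central fiber supplies the surface $\Sigma_g=X_1\cap X_2\cap X_3$. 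Producing such a Morse $2$-function and putting its critical locus in this radial ``trisection shape'' is the technical heart of the existence argument.

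For uniqueness, the key tool is a Cerf-type theorem for generic $1$-parameter families of Morse $2$-functions. Two trisections of the same closed $4$-manifold $X$ give two Morse $2$-functions whose critical images have the radial trisection shape; a generic path between them passes through a finite list of elementary codimension-one modifications of the critical image (swallowtails, eye births and deaths, fold merges, double points). My plan is to show that each such elementary modification either preserves the trisection up to isotopy or is realized by a single stabilization (or destabilization) in the sense of Definition \ref{def:stabili}. Since stabilizations may be inserted freely, pairs of opposite moves cancel, so after finitely many stabilizations on each side the two trisections become isotopic.

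The main obstacle, and the part I expect to occupy most of the work, is the uniqueness statement: one must set up the correct Cerf theory for Morse $2$-functions whose critical image carries the prescribed trisection shape, classify the codimension-one degenerations that can occur in a generic family, and check case by case that each is accounted for by an ambient isotopy or by a single (de)stabilization. The existence half is comparatively routine once the Morse $2$-function framework is in place, with the handle-theoretic identification of each sector with $\natural_{k_i}S^1\times D^3$ reducing to bookkeeping on the framed attaching data. All of this is carried out in \cite{MR3590351}, and I would not attempt to improve on their argument in the present paper.
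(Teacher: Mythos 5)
This theorem is imported verbatim from Gay--Kirby \cite{MR3590351}; the paper under review offers no proof of its own, so the only relevant check is whether your sketch faithfully reflects the cited argument, and it does --- both for existence (an indefinite Morse $2$-function arranged in trisected radial position, equivalently the handle-decomposition bookkeeping) and for uniqueness (Cerf theory for generic homotopies of such maps, with the codimension-one moves absorbed by stabilizations). No gap to report; deferring to \cite{MR3590351} is exactly what the paper itself does.
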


\begin{cor}[{\cite[Corollary 12]{MR3590351}}]
Two closed 4-manifolds are diffeomorphic if and only if two corresponding trisection diagrams are related by surface diffeomorphisms, handle slides among the same family curves and stabilizations.
\end{cor}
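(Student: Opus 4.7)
The plan is to prove both directions separately, using the preceding theorem (every closed 4-manifold admits a trisection, and any two trisections of the same 4-manifold become isotopic after a finite number of stabilizations) together with the correspondence between trisections and trisection diagrams recorded in Remark \ref{rem:correspond}.

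For the ``if'' direction, I would check that each of the three types of moves on a trisection diagram preserves the diffeomorphism type of the associated 4-manifold. A surface diffeomorphism of $\Sigma_g$ sending $(\alpha,\beta,\gamma)$ to $(\alpha',\beta',\gamma')$ extends to a diffeomorphism between the two spines built from the corresponding cut systems, and since a trisection is determined by its spine, this yields a diffeomorphism of the two trisected 4-manifolds. A handle slide among curves of the same color changes the cut system describing a single sector without changing the isotopy class of the handlebody it bounds, hence does not change the spine (and thus not the 4-manifold). Finally, a stabilization of the diagram realizes a $k$-stabilization of the trisection in the sense of Definition \ref{def:stabili}, which is a diffeomorphism-invariant operation since stabilization only modifies the trisection by a standard decomposition of $S^4$ attached internally.

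For the ``only if'' direction, suppose $X \cong Y$ via a diffeomorphism $h$, and pick trisections $\mathcal{T}_X$, $\mathcal{T}_Y$ of $X$, $Y$ with trisection diagrams $\mathcal{D}_X$, $\mathcal{D}_Y$. Transport $\mathcal{T}_X$ to $Y$ by $h$; by the stabilization theorem, after a finite number of stabilizations on each side, the two trisections on $Y$ become isotopic. Stabilizations of trisections translate directly into stabilizations of diagrams by the construction of Definition \ref{def:stabili}, so it suffices to assume the trisections on $Y$ are isotopic. An ambient isotopy restricts to a diffeomorphism of the central surface $\Sigma_g = X_1 \cap X_2 \cap X_3$, which accounts for the ``surface diffeomorphism'' move. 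After this, the two trisections have identical spines, so the two triples of cut systems $(\alpha,\beta,\gamma)$ and $(\alpha',\beta',\gamma')$ on $\Sigma_g$ are two meridian systems for the same three handlebodies $H_\alpha$, $H_\beta$, $H_\gamma$. Here I would invoke the classical fact (essentially the Reidemeister--Singer theorem for handlebodies) that any two cut systems for a handlebody are related by a sequence of handle slides within that cut system, applied separately to each of the three color families.

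The main obstacle is bookkeeping in the ``only if'' direction: making sure that the three steps (stabilization, ambient isotopy, and switching between cut systems) can be carried out \emph{independently} on the three color families without interfering with each other, so that handle slides genuinely remain among same-color curves. This is precisely where the fact that a trisection is determined by its spine is essential, since it lets one replace cut systems without altering the ambient 4-manifold; once this is in place, the argument follows the pattern of the analogous statement for Heegaard diagrams of 3-manifolds applied three times.
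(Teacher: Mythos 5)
The paper offers no proof of this corollary; it is quoted directly from Gay and Kirby (Corollary 12 of \cite{MR3590351}), so there is no in-paper argument to compare against. Your proposal is the standard derivation and is correct: the ``if'' direction rests on the fact that a trisection is determined by its spine (so any diagram move fixing the three handlebodies $H_\alpha$, $H_\beta$, $H_\gamma$ fixes the 4-manifold) together with the observation that diagram stabilization realizes the trisection stabilization of Definition \ref{def:stabili}, while the ``only if'' direction combines the existence-and-uniqueness theorem with the classical fact that two cut systems of the same handlebody are related by handle slides and isotopy, applied independently to each color family --- exactly the two inputs you invoke, and in the right places.
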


Two trisections are diffeomorphic if and only if two corresponding trisection diagrams are related without stabilizations. 

\begin{con}[{\cite[Conjecture 3.11]{MR3544545}}]\label{conj:4DWC}
Each trisection of $S^4$ is isotopic to the genus-0 trisection or its stabilization.
\end{con}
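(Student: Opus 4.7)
The conjecture is a widely believed-to-be-open $4$-dimensional analogue of Waldhausen's uniqueness theorem for Heegaard splittings of $S^3$, and is intimately tied to the smooth $4$-dimensional Poincar\'e conjecture. Any honest plan must therefore acknowledge that the general case appears to be out of reach of the tools assembled in the present excerpt. Nonetheless, the most natural line of attack is an induction on the genus $g$ of the trisection. The base case $g=0$ is automatic since, up to isotopy, there is a unique genus-$0$ trisection. For the inductive step, fix a trisection $(X_1,X_2,X_3)$ of $S^4$ of type $(g;k_1,k_2,k_3)$ with $g\ge 1$; since $\chi(S^4)=2+g-k_1-k_2-k_3=2$, we have $k_1+k_2+k_3=g$, so at least one $k_i\ge 1$ and the sector $X_i$ provides a natural place to hunt for a destabilizing feature.

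The concrete plan is to pass to the associated trisection diagram $(\Sigma_g;\alpha,\beta,\gamma)$ via Remark \ref{rem:correspond} and to search for a destabilization. Each of the three pairs $(\Sigma_g;\alpha,\beta)$, $(\Sigma_g;\beta,\gamma)$, $(\Sigma_g;\gamma,\alpha)$ is a Heegaard diagram of some $\#_{k_i} S^1 \times S^2$, and by Waldhausen's uniqueness theorem each such pair can independently be brought into standard form by a surface diffeomorphism together with handle slides within each of its two cut systems. Using this, one tries to produce a boundary-parallel arc $C_{ij}\subset X_i \cap X_j$ in the sense of Definition \ref{def:stabili} whose regular neighborhood realizes a destabilization; equivalently, an essential simple closed curve on $\Sigma_g$ that, after finitely many handle slides inside each family, bounds a disk in two of the handlebodies $H_\alpha, H_\beta, H_\gamma$ and can be isotoped off the third. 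Such a curve drops the genus by one, and the inductive hypothesis closes the argument.

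The main obstacle is the simultaneous standardization of the three cut systems: Waldhausen lets us normalize any two of them, but the normalizing diffeomorphism generally scrambles the third, so one cannot iterate the $3$-manifold rigidity results naively. In low genus the situation is tractable: rigidity theorems of Homma--Ochiai--Takahashi type for Heegaard diagrams of $\#_k S^1 \times S^2$, combined with a direct combinatorial case analysis on $\Sigma_g$, allow one to close the argument for small $g$ and recover the known low-genus cases. In full generality, controlling the interaction of all three cut systems at once appears to require genuinely new input, for instance a rigidity statement for triples of cut systems on $\Sigma_g$ or a thin-position/sweepout argument in the spirit of Rubinstein--Scharlemann for trisection surfaces. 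Producing such an input is the step where this plan stalls, and I would regard it as the principal place where a new idea is needed.
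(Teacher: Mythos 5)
This statement is not a theorem of the paper: it is Conjecture \ref{conj:4DWC}, quoted verbatim from the literature (\cite[Conjecture 3.11]{MR3544545}) as a $4$-dimensional analogue of Waldhausen's theorem, and the paper offers no proof of it --- it is only used as motivation for Question \ref{que:stabili} and accompanied by the remark that a counterexample might be produced via trisection diagrams of a manifold diffeomorphic to $S^4$. So there is nothing in the paper to compare your argument against, and you were right not to claim a complete proof.

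As a research plan your sketch is reasonable and correctly locates the difficulty, but it is not a proof, and the gap is exactly where you say it is: the inductive step requires finding, after handle slides, a curve on $\Sigma_g$ bounding disks in two of the three handlebodies and primitive in the third (a destabilizing configuration), and Waldhausen-type rigidity only lets you standardize one pair $(\Sigma_g;\alpha,\beta)$ at a time --- the normalizing diffeomorphism destroys any control over $\gamma$. No simultaneous rigidity statement for triples of cut systems is known, and this is precisely why the conjecture remains open (it is known only in low genus, e.g.\ by Meier--Zupan for $g\le 2$). One small correction: your bookkeeping $k_1+k_2+k_3=g$ for a trisection of $S^4$ is right, but the existence of a sector with $k_i\ge 1$ does not by itself produce a boundary-parallel arc $C_{ij}$ realizing a destabilization in the sense of Definition \ref{def:stabili}; that existence is the whole content of the conjecture, not a starting point for it.
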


This conjecture is a 4-dimensional analogue of Waldhausen's theorem which states that each genus-$g$ Heegaard splitting of $S^3$ is isotopic to the stabilization of the genus-0 Heegaard splitting. A counterexample of this conjecture may be given via trisection diagrams of a 4-manifold diffeomorphic to $S^4$ since any two isotopic trisections are diffeomorphic.

Trisections of 4-manifolds with boundary are called relative trisections \cite{castro2016relative}. Let us recall them here.

For the disk $D=\{re^{i\theta} \mid r\in [0, 1] , -\pi/3\leq \theta \leq \pi / 3 \} \subset \mathbb{C}$,  we define $\partial ^{\pm} D \subset \partial{D}$ and $\partial^{0} D \subset \partial{D}$ as follows:
\[
	 \partial ^{-} D = \{re^{\pi/3} \in \partial D \mid r\in [0, 1]\}, 
\] 
\[
	\partial^{0} D = \{e^{i\theta}\in \partial D\} \ \text{and}
\]
\[
	 \partial^{+} D = \{re^{-\pi/3} \in \partial D \mid r\in [0, 1]\}.
\]
Then, we have $\partial{D}=\partial ^{-} D \cup \partial^{0} D \cup  \partial^{+} D$.
For a genus-$p$ surface $\Sigma_{p,b}$ with $b$ boundary components, $U:=\Sigma_{p,b} \times D$ is diffeomorphic to the boundary connected-sum $\natural_{2p+b-1} S^1\times B^3$ since $\Sigma_{p,b}$ consists of a 0-handle and $2p+b-1$ 1-handles.
Furthermore, its boundary is decomposed into $\partial ^0 U = (\Sigma_{p,b} \times \partial ^{0} D) \cup (\partial \Sigma_{p,b} \times D)$ and into $\partial ^{\pm} U = \Sigma_{p,b} \times \partial^{\pm} D$.
Let $V_n$ be $\natural_n (S^1\times B^3)$ and  $\partial V_n = H_s^{-}\cup H_{s}^{+}$ a genus-$(n+s)$ Heegaard splitting of $\partial V_n$ for non-negative integers $n$ and $s$.

Let $s=g-k+p+b-1$ and $n=k-2p-b+1$. Then, we have $Z_k\cong U\natural V_n\cong \natural_k S^1\times B^3$.
Note that 
\[
	\partial Z_k = Y^{+}_{g, k ; p, b} \cup \partial ^0 U \cup Y^{-}_{g, k ; p, b}
\]
, where $Y^{\pm}_{g, k ; p, b}=\partial ^{\pm} U\natural H_{s}^{\pm}$.
Under the above settings, relative trisections are defined as follows.

\begin{dfn}
	Let $W$ be a 4-manifold with connected boundary. A decomposition $W=W_1\cup W_2\cup W_3$ of $W$ is called a $(g, k; p, b)$-{\it relative trisection} of W if the following conditions hold: 
	\begin{itemize}
		\item $W_i\cong Z_k$ for $i=1, 2, 3$ and
		\item $W_i\cap W_{i+1} \cong Y^{+}_{g, k ; p, b}$ and $W_i\cap W_{i-1}\cong Y^{-}_{g, k ; p, b}$ for $i=1, 2, 3$.
	\end{itemize}
\end{dfn}

It follows from these conditions that $W_1\cap W_2\cap W_3$ is a genus-$g$ surface $\Sigma_{g,b}$ with $b$ boundary components. Note that relative trisections in this definition are balanced. We can define unbalanced relative trisections similar to the closed case.

\begin{lem}[{\cite[Lemma 11]{MR3770114}}]\label{lem:open book decomposition}
A $(g,k;p,b)$-relative trisection of a 4-manifold $X$ with non-empty boundary induces an open book decomposition on $\partial{X}$ with page $\Sigma_{p,b}$. 
\end{lem}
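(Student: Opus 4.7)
The plan is to dissect $\partial X$ using the trisection of $X$ and recognize the resulting gluing as an open book. Write $X = X_1 \cup X_2 \cup X_3$ with sectors $X_i \cong Z_k = U \natural V_n$, where $U = \Sigma_{p,b}\times D$ and $D$ is the disk sector introduced above. From the boundary decomposition
\[
\partial X_i \;=\; Y^{+}_{g,k;p,b}\;\cup\;\partial^0 U\;\cup\;Y^{-}_{g,k;p,b},
\]
the pieces $Y^{\pm}_{g,k;p,b}$ are identified with those of the adjacent sectors inside the spine of the trisection, so only the three copies $\partial^0 U_i$ contribute to $\partial X$. Hence
\[
\partial X \;=\; \bigcup_{i=1}^{3}\partial^0 U_i\;=\;\bigcup_{i=1}^{3}\Bigl((\Sigma_{p,b}\times\partial^0 D_i)\cup(\partial\Sigma_{p,b}\times D_i)\Bigr).
\]

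Next I would track how the three copies of the disk sector $D$ assemble. The straight radial edges $\partial^{\pm}D_i$ lie inside $Y^{\pm}_{g,k;p,b}$ and are glued to $\partial^{\mp}D_{i\pm 1}$ by the trisection identifications, so the three disk sectors glue to a full round disk $D^2$ and their outer arcs $\partial^0 D_i$ glue to $\partial D^2 \cong S^1$. Consequently the binding contributions $\partial\Sigma_{p,b}\times D_i$ assemble into $\partial\Sigma_{p,b}\times D^2$, a tubular neighborhood of the link $\partial\Sigma_{p,b}\times\{0\}$, while the three page slabs $\Sigma_{p,b}\times\partial^0 D_i$ are glued along $\Sigma_{p,b}\times\{\text{endpoint}\}$ slices by boundary-fixing diffeomorphisms of $\Sigma_{p,b}$, producing a mapping torus $\Sigma_{p,b}\times_{\phi} S^1$ whose monodromy $\phi\in\operatorname{Mod}(\Sigma_{p,b})$ is the composition of the three sector-to-sector gluings.

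Finally I would assemble the two parts along their common boundary $\partial\Sigma_{p,b}\times S^1$ to conclude
\[
\partial X\;\cong\;(\Sigma_{p,b}\times_{\phi} S^1)\;\cup_{\partial\Sigma_{p,b}\times S^1}\;(\partial\Sigma_{p,b}\times D^2),
\]
which is exactly an open book decomposition with binding $\partial\Sigma_{p,b}$, page $\Sigma_{p,b}$, and monodromy $\phi$. The main technical obstacle is careful bookkeeping of the corner structure of $\partial^0 U_i$ along $\Sigma_{p,b}\times\{e^{\pm i\pi/3}\}$ and along $\partial\Sigma_{p,b}\times\partial^{\pm}D_i$, together with verifying that the $V_n\cong\natural_n(S^1\times B^3)$ summands of the sectors meet $\partial X$ only through their Heegaard surfaces $H^{\pm}_s$, which have already been absorbed into the $Y^{\pm}_{g,k;p,b}$ gluings; once these identifications are carried out, the open book structure on $\partial X$ reads off directly.
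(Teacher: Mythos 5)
Your argument is correct and is essentially the standard proof of this result: the paper itself gives no proof, quoting the lemma from Castro--Gay--Pinz\'on-Caicedo \cite[Lemma 11]{MR3770114}, and your decomposition of $\partial X$ into the three $\partial^0 U_i$ pieces, with the $\Sigma_{p,b}\times\partial^0 D_i$ slabs assembling into a mapping torus and the $\partial\Sigma_{p,b}\times D_i$ pieces assembling into a neighborhood $\partial\Sigma_{p,b}\times D^2$ of the binding, is exactly how that reference proves it. The only point to make explicit is that the identifications $W_i\cap W_{i\pm1}\cong Y^{\pm}_{g,k;p,b}$ are required (as part of the definition of the model gluings) to respect the product structure on $\partial^{\pm}U=\Sigma_{p,b}\times\partial^{\pm}D$, which is what justifies your claim that the three sector-to-sector gluings are boundary-fixing page diffeomorphisms composing to the monodromy $\phi$.
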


As in the closed case, relative trisection diagrams can be defined for relative trisections \cite{MR3770114}.

\begin{dfn}
For a genus-$g$ surface $\Sigma$ with $b$ boundary components and three sets of $g-p$ simple closed curves $\alpha$, $\beta$ and $\gamma$ on $\Sigma$, a $(g, k; p, b)$-{\it relative trisection diagram} is a 4-tuple $(\Sigma; \alpha, \beta, \gamma)$ such that each of $(\Sigma; \alpha, \beta)$, $(\Sigma; \beta, \gamma)$ and $(\Sigma; \gamma,\alpha)$ is related to the diagram shown in Figure \ref{reltridiag} by surface diffeomorphisms and handle slides among the same family curves.
\end{dfn}

\begin{figure}[h]
\centering
\includegraphics[scale=0.6]{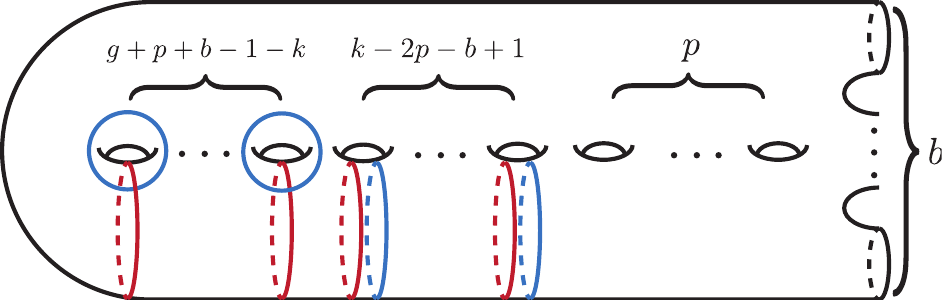}
\caption
{The standard diagram for relative trisection diagrams.}
\label{reltridiag}
\end{figure}

We can use the following lemma if we glue two relative trisections.

\begin{lem}[{\cite[Lemma 2.7]{MR3999550}}]\label{lem:gluing}
Let $X$ and $X'$ be 4-manifolds with non-empty and connected boundary, $T$ and $T'$ relative trisections of $X$ and $X'$, respectively, and $\mathcal{O}X$ and $\mathcal{O}X'$ the open book decompositions on $\partial{X}$ and $\partial{X}'$ induced by $T$ and $T'$, respectively. If $f \colon \partial{X} \to \partial{X}'$ is an orientation reversing diffeomorphism that takes $\mathcal{O}X$ to $\mathcal{O}X'$, then we obtain a trisection of the closed 4-manifold $\hat X=X \cup_{f} X'$ by gluing $T$ and $T'$.
\end{lem}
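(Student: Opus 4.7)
The plan is to define the three candidate sectors of the trisection of $\hat{X}$ by $\hat{W}_i := W_i \cup_{f} W'_{\sigma(i)}$, where $\sigma$ is the permutation of $\{1,2,3\}$ induced by the way $f$ matches the two open book decompositions; since $f$ reverses orientation, $\sigma$ will typically reverse the cyclic order of sectors. Because $f$ carries $\mathcal{O}X$ to $\mathcal{O}X'$, it sends bindings to bindings and pages to pages, and after an isotopy of $f$ through open-book-preserving diffeomorphisms (which does not alter the isotopy type of the result) it carries the boundary piece $\partial X \cap W_i \cong \partial^0 U_i$ onto $\partial X' \cap W'_{\sigma(i)}$. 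This makes each $\hat{W}_i$ well-defined, and the task reduces to verifying the three axioms of a trisection for $(\hat{W}_1, \hat{W}_2, \hat{W}_3)$.

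The triple-intersection axiom is the easiest to handle: the hypothesis on $f$ forces $p=p'$ and $b=b'$, so $\hat{W}_1 \cap \hat{W}_2 \cap \hat{W}_3$ is the union of the two central surfaces $\Sigma_{g,b}$ and $\Sigma_{g',b}$ along their common binding, yielding a closed oriented surface of the expected genus. For the pairwise-intersection axiom, one needs to show that each $\hat{W}_i \cap \hat{W}_{i+1} = (W_i \cap W_{i+1}) \cup_{f} (W'_{\sigma(i)} \cap W'_{\sigma(i+1)})$, a union of two $3$-dimensional $1$-handlebodies of type $Y^{\pm}_{g,k;p,b} = \partial^{\pm} U \natural H^{\pm}_s$ glued along the page-like surface in their common boundary, is again a $3$-dimensional $1$-handlebody. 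I would do this by interleaving the handle decompositions supplied by the $\natural$ structures on each side, using that the gluing surface is itself a thickened page on each factor.

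The technical heart is the sector axiom: each $\hat{W}_i$ must be a $4$-dimensional $1$-handlebody $\natural_{\hat{k}_i} S^1 \times D^3$. The key observation is that the gluing region $\partial^0 U_i = (\Sigma_{p,b} \times \partial^0 D) \cup (\partial \Sigma_{p,b} \times D)$ is a $3$-dimensional handlebody equipped with a canonical product collar inherited from the page fibration. After arranging $f$ to preserve this collar, the gluing can be realized as a boundary connected sum of $4$-dimensional $1$-handlebodies along disks, so $\hat{W}_i$ inherits a handle decomposition from those of $W_i = U_i \natural V_{n_i}$ and $W'_{\sigma(i)} \cong Z_{k'}$ containing only $0$- and $1$-handles.

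The main obstacle I expect is verifying rigorously that no $2$- or higher-index handles appear in this assembled decomposition. The cleanest approach is to first reduce to a local model in which $f$ acts as the standard identification on each sector piece, and then to perform an explicit handle-calculus check showing that the attaching regions of the second sector's $1$-handles lie in disks inside the boundary of the first sector. Once the three axioms are verified, $(\hat{W}_1, \hat{W}_2, \hat{W}_3)$ is a trisection of $\hat{X}$ built by gluing $T$ and $T'$, as desired.
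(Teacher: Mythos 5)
The paper does not actually prove this lemma---it is quoted from Castro--Ozbagci (and ultimately from Castro--Gay--Pinz\'on-Caicedo)---so the comparison is with the standard argument from that source. Your overall architecture (match sectors across $f$ using the open book structure, then verify the three axioms) is the right one, and your treatment of the central surface and of the pairwise intersections is fine in outline. The gap is in the sector axiom, exactly where you locate the ``technical heart.'' The gluing region for a sector is $\partial^0 U=(\Sigma_{p,b}\times\partial^0 D)\cup(\partial\Sigma_{p,b}\times D)$, a $3$-dimensional handlebody of genus $2p+b-1$, and gluing along it is \emph{not} a boundary connected sum along disks: an Euler characteristic count gives $\chi(\hat{W}_i)=(1-k)+(1-k')-(2-2p-b)$, so the glued sector must be $\natural_{k+k'-(2p+b-1)}S^1\times B^3$, whereas a boundary connected sum would produce $\natural_{k+k'}S^1\times B^3$. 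Worse, gluing two $4$-dimensional $1$-handlebodies along a positive-genus $3$-dimensional handlebody in their boundaries need not produce a $1$-handlebody at all: $B^4\cup_{S^1\times D^2}B^4$ is simply connected with $H_2\cong\mathbb{Z}$, hence a $D^2$-bundle over $S^2$. So the index-$\ge 2$ handles you worry about really can appear, and checking that ``the attaching regions of the second sector's $1$-handles lie in disks'' does not rule them out.

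What rules them out is the product structure of the sector itself, which your argument never invokes: $Z_k=U\natural V_n$ with $U=\Sigma_{p,b}\times D$, and since $D$ deformation retracts onto $\partial^0 D$, each sector is a collar $\partial^0 U\times I$ of its gluing region with only the $n=k-(2p+b-1)$ one-handles of $V_n$ attached. Consequently $\hat{W}_i=(\partial^0 U\times I)\cup(n+n')\ \text{one-handles}\cong\natural_{(2p+b-1)+n+n'}S^1\times B^3=\natural_{k+k'-(2p+b-1)}S^1\times B^3$: the $2$-handles created by identifying the two copies of $\partial^0 U$ over its $1$-handles are cancelled precisely by the $1$-handles of $U$ and $U'$. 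Without this structural input the sector axiom---and hence the lemma---is not established.
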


Note that if there exists a diffeomorphism $f$ as above, the page of $\mathcal{O}X$ is diffeomorphic to that of $\mathcal{O}X'$ via $f$. Thus, if $T$ and $T'$ are $(g,k;p,b)$- and $(g',k';p',b')$-relative trisections, then $p=p'$ and $b=b'$.

If we glue two relative trisection diagrams, we must construct relative trisection diagrams with arcs, which are called arced relative trisection diagrams. 

\begin{dfn}\label{def:artd}
Let $(\Sigma;\alpha,\beta,\gamma)$ be a $(g,k;p,b)$-relative trisection diagram, and  $\mathfrak{a}$, $\mathfrak{b}$ and $\mathfrak{c}$ collections
of $2p$ properly embedded arcs in $\Sigma$. Suppose that the arcs $\mathfrak{a}$ (resp. $\mathfrak{b}$ and $\mathfrak{c}$) are disjoint from $\alpha$ (resp. $\beta$ and $\gamma$). A tuple $(\Sigma;\alpha,\beta,\gamma,\mathfrak{a},\mathfrak{b},\mathfrak{c})$ is called \textit{an arced relative trisection diagram} if the following conditions hold:
\begin{itemize}

\item Cutting $\Sigma_\alpha$ (resp. $\Sigma_\beta$, $\Sigma_\gamma$) along $\mathfrak{a}$ (resp. $\mathfrak{b}$, $\mathfrak{c}$) yields a disk, where $\Sigma_x$ is a surface obtained by surgering $\Sigma$ along $x$ for $x \in \{\alpha, \beta, \gamma\}$.

\item By handle slides, $(\Sigma;\alpha,\beta,\mathfrak{a},\mathfrak{b})$ is related to some $(\Sigma;\alpha{'},\beta{'},\mathfrak{a}{'},\mathfrak{b}{'})$ such that $(\Sigma;\alpha{'},\beta{'})$ is related to the diagram shown in Figure \ref{reltridiag} and $\mathfrak{a}{'} = \mathfrak{b}{'}$.

\item By handle slides, $(\Sigma;\beta,\gamma,\mathfrak{b},\mathfrak{c})$ is related to some $(\Sigma;\beta{'},\gamma{'},\mathfrak{b}{'},\mathfrak{c}{'})$ such that $(\Sigma;\beta{'},\gamma{'})$ is related to the diagram shown in Figure \ref{reltridiag} and $\mathfrak{b}{'} = \mathfrak{c}{'}$.
\end{itemize}
\end{dfn}

An algorithm for drawing the arcs is developed in \cite[Theorem 5]{MR3770114}.

\begin{thm}[{\cite[Theorem 5]{MR3770114}}]\label{thm:algorithm}
Let $(\Sigma;\alpha,\beta,\gamma)$ be a relative trisection diagram, $\Sigma_\alpha$ the surface obtained by surgering $\Sigma$ along $\alpha$ and $\phi \colon \Sigma - \alpha \to \Sigma_\alpha$ an associated embedding. Then, collections $\mathfrak{a}$, $\mathfrak{b}$ and $\mathfrak{c}$ of arcs in $\Sigma$ are obtained by taking the procedure below in order.
\begin{enumerate}
\item There exists a collection of properly embedded arcs $\delta$ in $\Sigma_\alpha$ such that cutting $\Sigma_\alpha$ along $\delta$ yields a disk. Then, let $\mathfrak{a}$ denote a collection of properly embedded arcs in $\Sigma-\alpha$ such that $\delta$ is isotopic to $\phi(\mathfrak{a})$ in $\Sigma_\alpha$.
\item A collection of arcs in $\Sigma$ that is disjoint from $\beta$ can be obtained by sliding a copy of $\mathfrak{a}$ over $\alpha$. Then, let $\mathfrak{b}$ denote the collection of arcs. Note that in this operation, sliding a $\beta$ curve over other $\beta$ curves can be performed if necessary.
\item A collection of arcs in $\Sigma$ that is disjoint from $\gamma$ can be obtained by sliding a copy of $\mathfrak{b}$ over $\beta$. Then, let $\mathfrak{c}$ denote the collection of arcs. Note that in this operation, sliding a $\gamma$ curve over other $\gamma$ curves can be performed if necessary.
\end{enumerate}
\end{thm}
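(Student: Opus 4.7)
The plan is to verify that the three-step procedure outputs a tuple $(\Sigma;\alpha,\beta,\gamma,\mathfrak{a},\mathfrak{b},\mathfrak{c})$ satisfying the three bullets of Definition \ref{def:artd}. The broad strategy is to produce $\mathfrak{a}$ directly from the topology of $\Sigma_\alpha$, and then to obtain $\mathfrak{b}$ and $\mathfrak{c}$ successively by transporting through the handle-slide equivalences to the standard model of Figure \ref{reltridiag}, in which the cut systems on adjacent sides coincide canonically.

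For Step (1), I would observe that $\Sigma_\alpha$ is a compact oriented surface of the genus and boundary count prescribed by the relative trisection type, and any such surface admits a properly embedded arc system whose complement is a disk. Such a system can be isotoped off the surgery disks used to form $\Sigma_\alpha$, placing it inside $\phi(\Sigma-\alpha)$; its $\phi$-preimage is the desired $\mathfrak{a}$, which tautologically satisfies the first bullet of Definition \ref{def:artd}.

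For Steps (2) and (3), I would invoke the defining property of a relative trisection diagram: the pair $(\Sigma;\alpha,\beta)$ is related to the standard diagram of Figure \ref{reltridiag} by surface diffeomorphisms and handle slides within each family. In the standard model there is a canonical arc system that is simultaneously a cut system for the $\alpha$- and $\beta$-compression bodies, namely a pair with $\mathfrak{a}'=\mathfrak{b}'$. Pulling this matched pair back through the equivalence yields arcs $\mathfrak{a}'',\mathfrak{b}''\subset\Sigma$; since any two cut systems of $\Sigma_\alpha$ differ by handle slides of arcs across $\alpha$-curves, the $\mathfrak{a}$ from Step (1) can be brought to $\mathfrak{a}''$ by such slides, and mirroring those slides on $\mathfrak{b}''$ (together with auxiliary slides of $\beta$-curves over other $\beta$-curves) produces a $\mathfrak{b}$ that is disjoint from $\beta$ and cuts $\Sigma_\beta$ into a disk. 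Step (3) is then a verbatim repetition of Step (2) with $(\alpha,\beta,\mathfrak{a},\mathfrak{b})$ replaced by $(\beta,\gamma,\mathfrak{b},\mathfrak{c})$.

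The main obstacle is the bookkeeping in Step (2) needed for the second bullet of Definition \ref{def:artd}: the slides that match $\mathfrak{a}$ to $\mathfrak{a}''$ must stay within the $\mathfrak{a}$-family over $\alpha$-curves, and the corresponding slides on the $\beta$-side must not spoil the disjointness already arranged on the $\alpha$-side. I would handle this by setting up the equality $\mathfrak{a}'=\mathfrak{b}'$ first in the standard model and then invoking the invariance of the arced-diagram conditions under handle slides within a single family, which allows the equality to be pulled back through the equivalence without disturbing the incidence patterns of $(\alpha,\mathfrak{a})$ and $(\beta,\mathfrak{b})$ separately.
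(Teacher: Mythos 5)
The paper itself offers no proof of this statement: it is imported verbatim as \cite[Theorem 5]{MR3770114} (Castro--Gay--Pinz\'on-Caicedo) and used as a black box, so there is no in-paper argument to compare yours against. Judged on its own, your outline follows the same general strategy as the cited source --- standardize the pair $(\Sigma;\alpha,\beta)$ to the model of Figure \ref{reltridiag}, exhibit a matched arc system $\mathfrak{a}'=\mathfrak{b}'$ there, and transport it back --- but the step that carries all the content is asserted rather than proved.

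The gap is in your Step (2). You claim that ``any two cut systems of $\Sigma_\alpha$ differ by handle slides of arcs across $\alpha$-curves,'' and use this to move the $\mathfrak{a}$ of Step (1) onto the pulled-back standard arcs $\mathfrak{a}''$. That is not what arc slides give you: two arc systems cutting $\Sigma_\alpha$ into a disk differ by isotopy in $\Sigma_\alpha$ and by slides of arcs over \emph{other arcs}; only the portions of an isotopy that cross the scars of the $\alpha$-surgeries translate into slides over $\alpha$-curves upstairs in $\Sigma$, and arc-over-arc slides do not disappear. Moreover, the theorem's Step (2) does not permit replacing $\mathfrak{a}$ at all --- $\mathfrak{a}$ is fixed by Step (1), and $\mathfrak{b}$ must be produced from a parallel copy of that particular $\mathfrak{a}$ using only slides over $\alpha$ (plus $\beta$-over-$\beta$ slides). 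So the statement actually to be proved is: for the specific pair $(\alpha,\beta)$ occurring in a relative trisection diagram, every arc system disjoint from $\alpha$ can be pushed off $\beta$ by $\alpha$-slides alone, and the result still cuts $\Sigma_\beta$ into a disk with the matched-pair property of Definition \ref{def:artd}. This rests on the fact that $(\Sigma;\alpha,\beta)$ is slide-equivalent to the standard diagram, in which each $\beta$-curve is either parallel to or geometrically dual to an $\alpha$-curve, so that intersections of the copied arcs with $\beta$ can be cancelled against $\alpha$; your ``mirroring of slides'' gestures at this but never verifies that the mirrored moves preserve disjointness from $\alpha$ on one side while achieving disjointness from $\beta$ on the other. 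To close the gap you would need to run the argument explicitly in the standard model (where $\mathfrak{a}'=\mathfrak{b}'$ is visible) and check that the handle slides and diffeomorphisms realizing the standardization conjugate $\alpha$-slides to $\alpha$-slides and $\beta$-slides to $\beta$-slides, which is precisely what the proof in \cite{MR3770114} does.
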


Let $(\Sigma;\alpha,\beta,\gamma)$ and $(\Sigma';\alpha',\beta',\gamma')$ be $(g,k;p,b)$- and $(g',k';p',b')$-relative trisection diagrams of 4-manifolds $X$ and $X'$, respectively, and $(\mathfrak{a},\mathfrak{b},\mathfrak{c})$ a collection of arcs for $(\Sigma;\alpha,\beta,\gamma)$ in Definition \ref{def:artd}, where $\mathfrak{a}=\{a_1,a_2, \ldots, a_\ell\}$ and $\ell=2p+b-1$. If there exists $f$ in Lemma \ref{lem:gluing}, we can obtain a collection of arcs $\mathfrak{a}'=\{a_1',a_2', \ldots, a_\ell'\}$ that cuts $\Sigma_{\alpha'}$ into a disk, where $a_i'=f(a_i)$ for $i=1,2,\ldots \ell$. Then, by Theorem \ref{thm:algorithm}, we have a collection of arcs $(\mathfrak{a}',\mathfrak{b}',\mathfrak{c}')$ for $(\Sigma';\alpha',\beta',\gamma')$. After that, we can obtain three kinds of new simple closed curves in $\Sigma \cup_{f} \Sigma'$, i.e. $\mathfrak{a} \cup_\partial \mathfrak{a}'$, $\mathfrak{b} \cup_\partial \mathfrak{b}'$ and $\mathfrak{c} \cup_\partial \mathfrak{c}'$ via $f$. Then, we have the following proposition, where $\hat\Sigma=\Sigma \cup_{f} \Sigma'$ and $\tilde{\alpha}$ (resp. $\tilde{\beta}$, $\tilde{\gamma}$) $=(\mathfrak{a}_j \cup_{\partial} \mathfrak{a}'_j)_j$ (resp. $=(\mathfrak{b}_j \cup_{\partial} \mathfrak{b}'_j)_j$, $=(\mathfrak{c}_j \cup_{\partial} \mathfrak{c}'_j)_j$).

\begin{prop}[{\cite[Proposition 2.12]{MR3999550}}]
Under the assumptions of Lemma \ref{lem:gluing} and the above setup, the 4-tuple $(\hat\Sigma, \hat\alpha, \hat\beta, \hat\gamma)$ is a trisection diagram of the closed 4-manifold $\hat X=X \cup_{f} X'$, where $\hat\alpha=\alpha \cup \alpha' \cup \tilde{\alpha}$, $\hat\beta=\beta \cup \beta' \cup \tilde{\beta}$ and $\hat\gamma=\gamma \cup \gamma' \cup \tilde{\gamma}$.
\end{prop}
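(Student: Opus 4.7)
The plan is to combine Lemma~\ref{lem:gluing} with a direct verification that the prescribed curves form meridian systems for the three pieces of the trisected spine. By Lemma~\ref{lem:gluing}, the identification $f$ assembles the two relative trisections into a trisection $(\hat X_1, \hat X_2, \hat X_3)$ of $\hat X$ with sectors $\hat X_i = X_i \cup_f X_i'$, whose central surface is $\hat\Sigma = \Sigma \cup_{f|} \Sigma'$, a closed surface of genus $g+g'+b-1$; note that the hypothesis of Lemma~\ref{lem:gluing} forces $p=p'$ and $b=b'$. By Remark~\ref{rem:correspond}, it then suffices to check that each of $\hat\alpha$, $\hat\beta$, $\hat\gamma$ is a cut system for the corresponding handlebody $\hat H_\alpha = H_\alpha \cup H_{\alpha'}$, $\hat H_\beta = H_\beta \cup H_{\beta'}$, $\hat H_\gamma = H_\gamma \cup H_{\gamma'}$ in the spine of the assembled trisection.

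I would focus on $\hat\alpha$; the cases of $\hat\beta$ and $\hat\gamma$ are identical after cyclic relabelling using the pairs $(\mathfrak{b},\mathfrak{b}')$ and $(\mathfrak{c},\mathfrak{c}')$, which by Theorem~\ref{thm:algorithm} are built from $\mathfrak{a}, \mathfrak{a}'$ by the same slide procedure on both sides. The verification of $\hat\alpha$ splits into three points. First, a cardinality check: $\alpha \cup \alpha'$ contributes $(g-p)+(g'-p)$ curves and $\tilde\alpha$ contributes the $2p+b-1$ simple closed curves obtained by pairing each $a_i \in \mathfrak{a}$ with its partner $a_i' = f(a_i) \in \mathfrak{a}'$, giving exactly the genus of $\hat\Sigma$ in total. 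Second, each curve of $\hat\alpha$ bounds a disk in $\hat H_\alpha$: curves in $\alpha$ and $\alpha'$ are meridians of $H_\alpha$ and $H_{\alpha'}$ by the definition of a relative trisection, while each curve of $\tilde\alpha$ is the boundary of a disk obtained by extending the arc $a_i$ to a properly embedded disk in $H_\alpha$ (using that $\mathfrak{a}$ cuts $\Sigma_\alpha$ into a disk), glued to its counterpart in $H_{\alpha'}$ via $f$. Third, the complement of these disks in $\hat H_\alpha$ is a single $3$-ball: cutting $H_\alpha$ along the $\alpha$-disks reduces the relative handlebody to a thickening of the page $\Sigma_{p,b}$, and cutting this further along the disks extending $\mathfrak{a}$ produces a $3$-ball; the same happens on the primed side, and open book compatibility of $f$ then glues the two balls along a common disk on the page portion of their boundaries to a single $3$-ball.

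The main technical obstacle is the bookkeeping in the third point: one must track the decomposition of $\partial H_\alpha$ into its central surface, page, and binding pieces precisely, and verify that after all the cutting the two remnants in $H_\alpha$ and $H_{\alpha'}$ meet along exactly the page portion of their boundary identified by $f$, with compatible orientations, so that the union really is a ball rather than something larger. Once this is organized, the Heegaard diagram conditions of Definition~\ref{def:trisection diagram} follow formally from the cut-system property, completing the proof.
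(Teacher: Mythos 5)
First, a caveat: the paper does not prove this proposition; it is quoted with citation from \cite[Proposition 2.12]{MR3999550}, so there is no in-paper argument to compare yours against, only the argument in the cited source (which goes back to Castro, Gay and Pinz\'on-Caicedo). Your overall strategy is the right one and is essentially theirs: invoke Lemma \ref{lem:gluing} to produce the trisection, reduce via Remark \ref{rem:correspond} to showing that each of $\hat\alpha,\hat\beta,\hat\gamma$ is a cut system for the corresponding handlebody of the spine, and then verify the count, the disks, and that the complement is a ball. Your treatment of the $\hat\alpha$ family is correct, including the count $(g-p)+(g'-p)+(2p+b-1)=g+g'+b-1$ and the description of each curve of $\tilde\alpha$ as bounding two half-disks that meet along the page.

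The genuine gap is the sentence asserting that $\hat\beta$ and $\hat\gamma$ are ``identical after cyclic relabelling.'' They are not, and this is where the real content of the proposition lives. For $\hat\alpha$ you are handed exactly the compatibility you need: $\mathfrak{a}'=f(\mathfrak{a})$ by construction, so the two half-disks automatically match along the page where $\hat H_\alpha$ meets the gluing locus. For $\hat\beta$, however, $\mathfrak{b}'$ is \emph{not} defined as $f(\mathfrak{b})$; it is produced by running the algorithm of Theorem \ref{thm:algorithm} on $(\Sigma';\alpha',\beta',\gamma')$ starting from $\mathfrak{a}'$, independently of what is done on the unprimed side. The half-disk in $H_\beta$ bounded by an arc of $\mathfrak{b}$ meets $\partial X$ in a \emph{different} page of the open book than the one relevant to $\mathfrak{a}$, and one must prove that the arc it traces there is carried by $f$ to the arc traced by the corresponding half-disk of $\mathfrak{b}'$ in $H_{\beta'}$. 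This is precisely what the compatibility conditions of Definition \ref{def:artd} (that $(\Sigma;\alpha,\beta,\mathfrak{a},\mathfrak{b})$ is slide-equivalent to a standard picture in which the two arc systems coincide) together with the hypothesis that $f$ carries $\mathcal{O}X$ to $\mathcal{O}X'$ are needed to establish; for $\hat\gamma$ one must additionally track the monodromy when closing up from $\mathfrak{c}$ back to $\mathfrak{a}$. Without this step the curves of $\tilde\beta$ and $\tilde\gamma$ are not known to bound disks in $\hat H_\beta$ and $\hat H_\gamma$, so the argument as written does not close. The ``main technical obstacle'' you flag (checking that the cut-open complements reassemble into a single ball) is real but routine; the page-matching for the $\beta$ and $\gamma$ families is the missing idea.
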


\section{Main Theorem}\label{sec:main theorem}

We first construct a Lefschetz fibration of the complement of a neighborhood of a regular fiber.

\begin{lem}\label{lem:complement}
Let $X$ be a closed 4-manifold admitting a genus-$p$ achiral Lefschetz fibration over $S^2$ with monodromy $\varphi \in \operatorname{Mod}(\Sigma_{p})$. Then, $X-(\Sigma_p \times D^2)$ admits an achiral Lefschetz fibration over $D^2$ with regular fiber a genus-$p$ surface $\Sigma_{p,2}$ with two boundary components and monodromy $\varphi \circ t_{\delta_1}^{\pm1} \circ t_{\delta_2}^{\mp1} \in \operatorname{Mod}(\Sigma_{p,2})$, where $\delta_1$ and $\delta_2$ are the boundary parallel curves depicted in Figure \ref{scc1}.
\end{lem}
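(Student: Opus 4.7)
The plan is to decompose $S^2$ as $D_0 \cup D_1$ where $D_0$ is a small disk around a regular value with $f^{-1}(D_0) = \nu(F) = \Sigma_p \times D^2$ and $D_1 = S^2 \setminus \operatorname{int}(D_0)$ contains all critical values of $f$. Then $W := X - \nu(F) = f^{-1}(D_1)$ carries the obvious Lefschetz fibration over $D_1 \cong D^2$ with closed fiber $\Sigma_p$ and boundary monodromy $\varphi \in \operatorname{Mod}(\Sigma_p)$, and I plan to re-present this fibration as an achiral Lefschetz fibration with bordered fiber $\Sigma_{p,2}$ by inserting two additional achiral singularities of opposite signs near $\partial D_1$, whose vanishing cycles are the boundary-parallel curves $\delta_1, \delta_2$.

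The core sub-claim is a local model: the trivial bundle $\Sigma_p \times D^2$ admits an achiral Lefschetz fibration over $D^2$ with regular fiber $\Sigma_{p,2}$ and global monodromy $t_{\delta_1}^{\pm 1} \circ t_{\delta_2}^{\mp 1} \in \operatorname{Mod}(\Sigma_{p,2})$. I would verify this using the handle-diagram correspondence of Subsection \ref{subsec:handle diagram}: start from $\Sigma_{p,2} \times D^2 \cong \natural_{2p+1}(S^1 \times D^3)$ and attach two $2$-handles along $\delta_1 \times \{p_1\}$ and $\delta_2 \times \{p_2\}$ on a common page with page framings $\mp 1$ and $\pm 1$ (one chiral and one achiral node). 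The resulting handle structure has the correct Euler characteristic $2-2p$, and I plan to exhibit the $1$/$2$-handle cancellations that convert it to the standard presentation of $\Sigma_p \times D^2$. The key point is that boundary-parallelism of each $\delta_i$ lets the corresponding $2$-handle interact with the two ``extra'' $1$-handles distinguishing the handle decompositions of $\Sigma_{p,2}$ and $\Sigma_p$, and the opposite signs of the two singularities are exactly what is needed to force the resulting $4$-manifold to be the untwisted product $\Sigma_p \times D^2$ rather than some nontrivial $\Sigma_p$-bundle over $D^2$.

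Granted the local model, I would glue it into $W$ as follows. Shrink $D_1$ to a concentric sub-disk $D_1' \Subset D_1$ still containing every critical value, so that the annular region $A := D_1 \setminus \operatorname{int}(D_1')$ pulls back to $f^{-1}(A) \cong \Sigma_p \times A$, to which the local model applies after thickening $A$ into a disk. Extend the bordered-fiber structure inward over $D_1'$ by choosing two disjoint vertical arcs over $D_1'$ that miss every vanishing thimble of $\varphi$; such arcs exist because the vanishing cycles of $\varphi$ can be assumed to avoid two preselected points in the fiber. These arcs supply the two binding circles of the open book induced on the inner side of the gluing, along which the original closed-fiber structure and the new bordered-fiber structure agree. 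After the gluing, the fibration on $W$ has regular fiber $\Sigma_{p,2}$ and $m+2$ singular fibers, whose vanishing cycles are those of $\varphi$ (now viewed as curves in $\Sigma_{p,2}$) together with $\delta_1$ and $\delta_2$, producing the claimed global monodromy $\varphi \circ t_{\delta_1}^{\pm 1} \circ t_{\delta_2}^{\mp 1}$. The main obstacle is the Kirby-calculus verification of the local model; once that is settled, the remaining assembly and monodromy bookkeeping are routine.
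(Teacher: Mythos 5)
Your overall strategy is reasonable, and your local model is exactly the paper's Lemma \ref{lem:fiber} in the case $n=0$: the Lefschetz fibration over $D^2$ with fiber $\Sigma_{p,2}$ and monodromy $t_{\delta_1}^{\pm1}t_{\delta_2}^{\mp1}$ has total space $\Sigma_p\times D^2$. However, you have deferred precisely the computation that constitutes the proof. The paper's argument is a single handle-calculus verification run in the opposite direction: it starts from the candidate word $\varphi\circ t_{\delta_1}^{+1}\circ t_{\delta_2}^{-1}\in\operatorname{Mod}(\Sigma_{p,2})$, draws the handle diagram of the associated Lefschetz fibration over $D^2$ via the recipe of Subsection \ref{subsec:handle diagram} (Figure \ref{HD1}), and then slides and cancels handles (Figures \ref{HD2} and \ref{HD3}) until the standard diagram of $X-(\Sigma_p\times D^2)$ appears. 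In your version all of that content is compressed into ``I plan to exhibit the $1$/$2$-handle cancellations,'' supported only by an Euler characteristic check, which cannot distinguish the untwisted product from other $\Sigma_p$-bundles over $D^2$, nor from the total spaces obtained with other boundary-twist exponents.

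The more serious issue is the assembly step. First, ``thickening $A$ into a disk'' is not a meaningful operation: the local model lives over a disk, while $f^{-1}(A)\cong\Sigma_p\times A$ lives over an annulus, and the piece of $W$ that actually carries the two new critical points in the bordered-fiber structure is $\Sigma_p\times A$ together with the two section neighborhoods $\nu(s_1),\nu(s_2)$ over $D_1'$, not $\Sigma_p\times A$ alone. Second, and decisively, the exponents of $t_{\delta_1}$ and $t_{\delta_2}$ are never pinned down. Removing tubular neighborhoods of two sections over $D_1'$ lifts the word $\varphi$ to $\operatorname{Mod}(\Sigma_{p,2})$ only up to boundary multitwists $t_{\delta_1}^{a}t_{\delta_2}^{b}$, where $a$ and $b$ depend on the framings of the chosen neighborhoods, and re-gluing each $\nu(s_i)$ amounts to attaching a $2$-handle along $\delta_i$ with a page framing that depends on that same choice. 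Nothing in your argument forces $(a,b)=(\pm1,\mp1)$ rather than any other pair; the only justification offered is the circular remark that opposite signs are ``what is needed'' for the local model to close up to the untwisted product. This is exactly the bookkeeping the paper's explicit Kirby moves carry out, and without it the precise monodromy statement --- which Theorem \ref{thm:-n-section} needs in order to match the induced open books and apply Lemma \ref{lem:gluing} --- is not established.
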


\begin{figure}[h]
\begin{center}
\includegraphics[width=5cm,page=2]{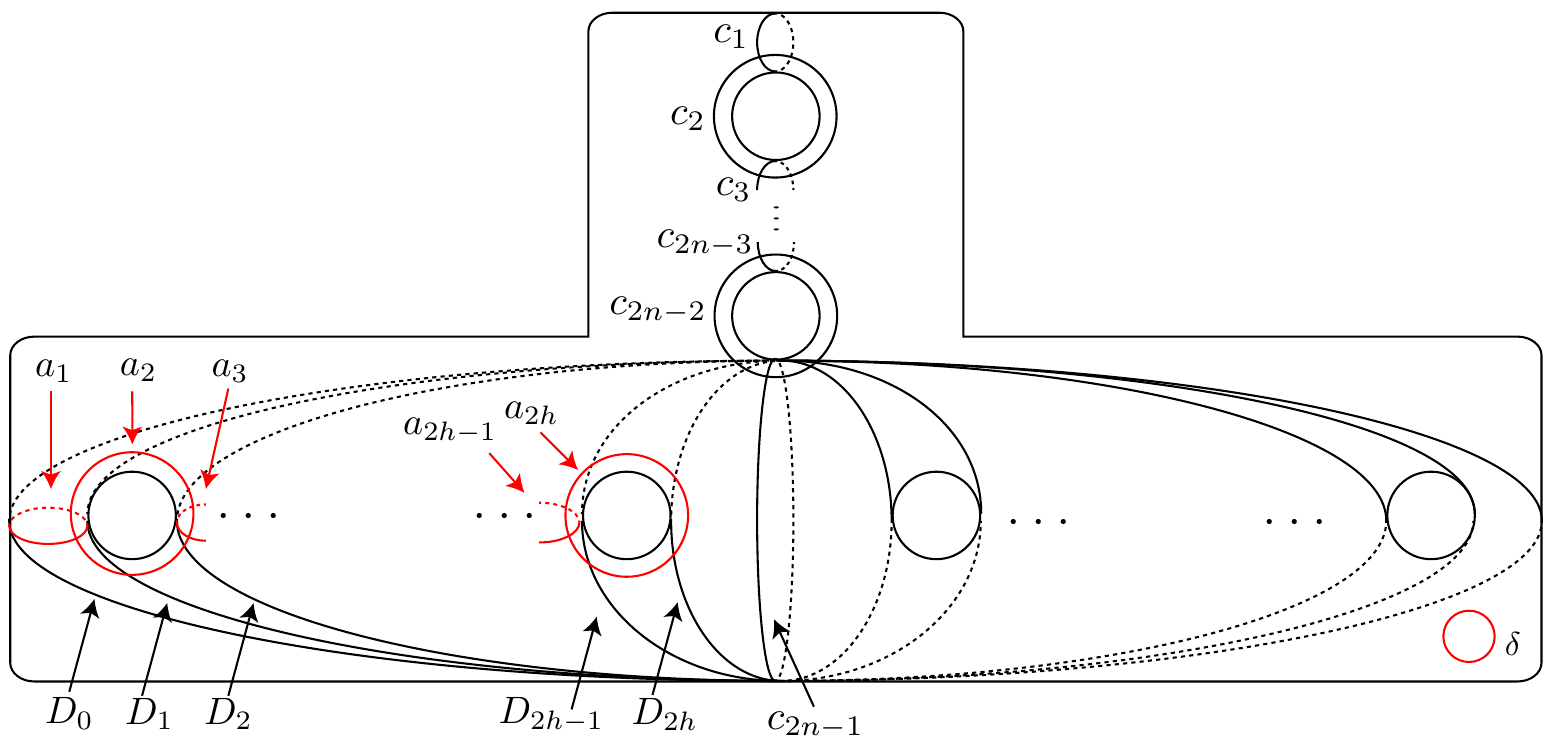}
\end{center}
\setlength{\captionmargin}{50pt}
\caption{The boundary parallel curves $\delta_1, \delta_2$ on $\Sigma_{p,2}$.}
\label{scc1}
\end{figure}

\begin{figure}[h]
\begin{center}
\includegraphics[width=8cm,page=9]{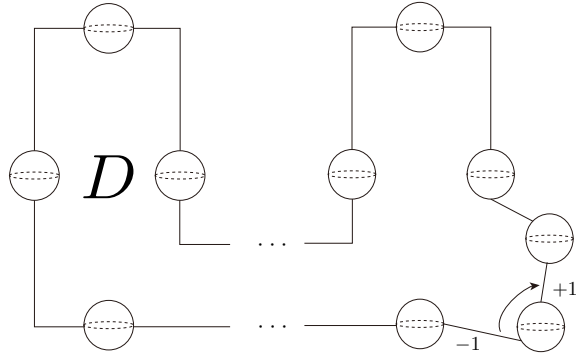}
\end{center}
\setlength{\captionmargin}{50pt}
\caption{The handle diagram arising from the achiral Lefschetz fibration over $D^2$ with $\varphi \circ t_{\delta_1}^{+1} \circ t_{\delta_2}^{-1}$.} 
\label{HD1}
\end{figure}

\begin{figure}[h]
\begin{center}
\includegraphics[width=8cm,page=10]{HDs.v2.pdf}
\end{center}
\setlength{\captionmargin}{50pt}
\caption{The handle diagram obtained by handle sliding.} 
\label{HD2}
\end{figure}

\begin{figure}[h]
\begin{center}
\includegraphics[width=8cm,page=11]{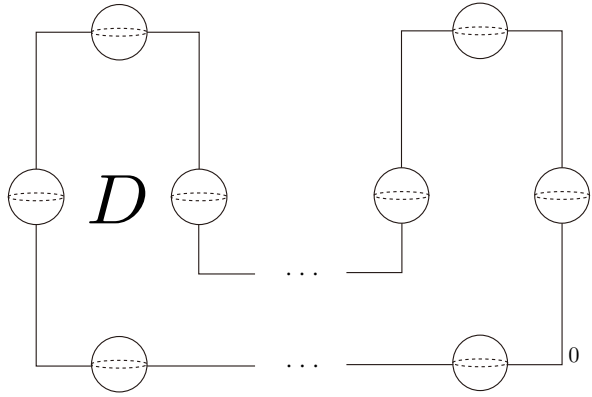}
\end{center}
\setlength{\captionmargin}{50pt}
\caption{The handle diagram of $X - (\Sigma_p \times D^2)$.} 
\label{HD3}
\end{figure}

\begin{proof}
Let $D$ in Figure~\ref{HD1} be the 2–handles corresponding to the vanishing cycles of the monodromy $\varphi\in \operatorname{Mod}(\Sigma_{p})$ of the achiral Lefschetz fibration over $S^2$ in the handle diagram of $X$.  
In the case of $\varphi \circ t_{\delta_1}^{+1}\circ t_{\delta_2}^{-1}\in \operatorname{Mod}(\Sigma_{p,2})$, Figure~\ref{HD1} represents the whole handle diagram arising from the achiral Lefschetz fibration over $D^2$.
This figure can be drawn from the method in Subsection \ref{subsec:handle diagram}.
Then, we obtain Figure~\ref{HD2} by performing the handle slide as indicated by the arrow and canceling the 1-, 2-handle pair.  
This Figure~\ref{HD3} clearly gives the handle diagram of $X - (\Sigma_p \times D^2)$.
Since the same argument applies to the case with opposite signs, the statement holds.
\end{proof}

Let $n \mathbb{C}P^2$ denote $n \mathbb{C}P^2$ for a positive integer $n$, $(-n) \overline{\mathbb{C}P^2}$ 
for a negative integer $n$ and $S^4$ for $n=0$.
We next construct a Lefschetz fibration of the connected sum of a neighborhood of a regular fiber with $n \mathbb{C}P^2$.

\begin{lem}\label{lem:fiber}
For a closed orientable surface $\Sigma_p$ of genus-$p$, the 4-manifold $\Sigma_p \times D^2 \# n \mathbb{C}P^2$ admits an achiral Lefschetz fibration over $D^2$ with regular fiber a genus-$p$ surface $\Sigma_{p,2}$ with two boundary components and monodromy $t_{\delta_3}^{-n} \circ t_{\delta_1}^{\mp1} \circ t_{\delta_2}^{\pm1}\in \operatorname{Mod}(\Sigma_{p,2})$, where $\delta_1$, $\delta_2$ and $\delta_3$ are the curves depicted in Figure \ref{scc2}.
\end{lem}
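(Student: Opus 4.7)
The plan is to mirror the handle-diagram argument in the proof of Lemma \ref{lem:complement}, now applied on the interior of a fiber neighborhood. Starting from the factorization $t_{\delta_3}^{-n}\circ t_{\delta_1}^{\mp1}\circ t_{\delta_2}^{\pm1}$ in $\operatorname{Mod}(\Sigma_{p,2})$, the converse half of the correspondence between monodromy factorizations and Lefschetz fibrations produces an achiral Lefschetz fibration $f\colon Y\to D^2$ with regular fiber $\Sigma_{p,2}$ whose vanishing cycles and signs are as prescribed. What remains is to identify $Y$ with $\Sigma_p\times D^2\,\#\,n\mathbb{C}P^2$ via Kirby calculus.

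Following Subsection~\ref{subsec:handle diagram}, I would first draw the handle diagram of $Y$: one $0$-handle together with $2p+1$ dotted $1$-handles for the page $\Sigma_{p,2}\times D^2$, a $2$-handle along $\delta_2$ with framing $\mp 1$, a $2$-handle along $\delta_1$ with framing $\pm 1$, and $\abs{n}$ parallel copies of a $2$-handle along $\delta_3$ whose common framing is $+1$ for $n>0$ and $-1$ for $n<0$. Because $\delta_3$ is presented in Figure~\ref{scc2} as a small curve bounding a disk in $\Sigma_{p,2}$ disjoint from $\delta_1$ and $\delta_2$, these $\abs{n}$ handles already appear as disjoint unknotted circles unlinked from the rest of the picture.

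The main simplification step mimics the proof of Lemma~\ref{lem:complement}: slide the $2$-handle attached along $\delta_1$ over the $2$-handle attached along $\delta_2$, producing an unknotted circle running once through one of the $2p+1$ dotted $1$-handles, and then cancel that $1$-handle/$2$-handle pair. This collapses one boundary component of the page, replacing $\Sigma_{p,2}$ by $\Sigma_{p,1}$, while the remaining boundary-parallel $2$-handle becomes the $0$-framed $2$-handle that caps $\Sigma_{p,1}\times D^2$ off to $\Sigma_p\times D^2$. Since the $\delta_3$-handles are disjoint from every arc involved in these moves, they persist unchanged, and the final diagram is the boundary connected sum of the handle diagram of $\Sigma_p\times D^2$ with $\abs{n}$ isolated $\pm 1$-framed unknots. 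Each such unknotted $+1$- (resp.\ $-1$-) framed $2$-handle contributes a summand of $\mathbb{C}P^2$ (resp.\ $\overline{\mathbb{C}P^2}$), giving $Y\cong \Sigma_p\times D^2\,\#\,n\mathbb{C}P^2$ in the notation of the paper.

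The main obstacle is the Kirby-move bookkeeping, in particular checking that the slide yields a genuine cancelling pair and that the framing on the surviving $\delta_i$ handle after cancellation is $0$ rather than some nonzero integer, so that the result is indeed the trivial disk bundle $\Sigma_p\times D^2$. This is essentially the local computation already carried out in Figures~\ref{HD1}-\ref{HD3} of Lemma~\ref{lem:complement}, with the signs on $\delta_1,\delta_2$ reversed to reflect the opposite orientation of the fiber neighborhood; I would reproduce those pictures while simply carrying the unaffected $\delta_3$ handles along as disjoint bystanders.
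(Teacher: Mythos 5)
Your overall strategy---draw the handle diagram dictated by the monodromy factorization as in Subsection~\ref{subsec:handle diagram}, perform the slide-and-cancel move of Lemma~\ref{lem:complement} on the $\delta_1,\delta_2$ handles, and then recognize the leftover $\pm1$-framed unknots as $\mathbb{C}P^2$ or $\overline{\mathbb{C}P^2}$ summands---is exactly the paper's approach (Figures~\ref{HD4}--\ref{HD7}). However, there is a genuine gap in how you treat the $\delta_3$ handles.

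You assert that $\delta_3$ is ``a small curve bounding a disk in $\Sigma_{p,2}$ disjoint from $\delta_1$ and $\delta_2$,'' so that the $\abs{n}$ corresponding $2$-handles are split unknots from the outset and can be carried along as ``disjoint bystanders.'' This cannot be right: if $\delta_3$ bounded a disk in $\Sigma_{p,2}$, then $t_{\delta_3}$ would be isotopically trivial and the monodromy in the statement would reduce to $t_{\delta_1}^{\mp1}\circ t_{\delta_2}^{\pm1}$, which would destroy the open book matching needed to glue this piece to $X-\Sigma_p\times D^2$ in Theorem~\ref{thm:-n-section} (there $\varphi=t_{\delta_3}^{n}$ is the nontrivial boundary twist of $\Sigma_{p,1}$ pushed into $\Sigma_{p,2}$). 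In Figure~\ref{scc2}, $\delta_3$ is an essential separating curve cutting off a pair of pants containing the two boundary components of $\Sigma_{p,2}$; in the handle diagram of Figure~\ref{HD4} the $\abs{n}$ parallel $(+1)$-framed copies of $\delta_3$ therefore link the dotted $1$-handle creating the second boundary component, the $\delta_1,\delta_2$ handles, and each other. After the slide-and-cancel step they still link the surviving $0$-framed $2$-handle, so the diagram is \emph{not} yet a boundary connected sum with split $\pm1$-framed unknots. The paper's proof supplies precisely the missing step: each $\delta_3$ handle must be slid over the $0$-framed $2$-handle(s), and this process is repeated $(n-1)$ more times to unlink all $\abs{n}$ copies from the rest of the diagram and from one another (Figures~\ref{HD6}--\ref{HD7}). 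You need to add these slides (and verify that they preserve the $\pm1$ framings) before you can conclude that the total space is $\Sigma_p\times D^2\,\#\,n\mathbb{C}P^2$.
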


\begin{figure}[h]
\begin{center}
\includegraphics[width=5cm,page=3]{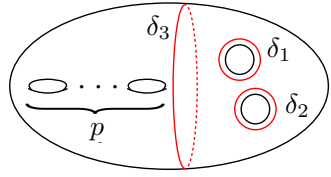}
\end{center}
\setlength{\captionmargin}{50pt}
\caption{The simple closed curves $\delta_1, \delta_2, \delta_3$ on $\Sigma_{p,2}$.}
\label{scc2}
\end{figure}

\begin{figure}[h]
\begin{center}
\includegraphics[width=8cm,page=5]{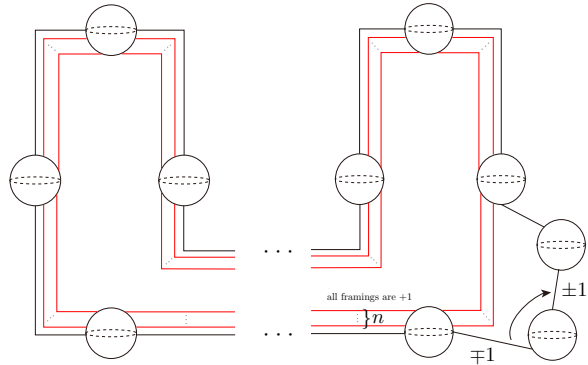}
\end{center}
\setlength{\captionmargin}{50pt}
\caption{The handle diagram arising from the achiral Lefschetz fibration over $D^2$ with $t_{\delta_3}^{-n} \circ t_{\delta_1}^{\mp1} \circ t_{\delta_2}^{\pm1}\in \operatorname{Mod}(\Sigma_{p,2})$.} 
\label{HD4}
\end{figure}

\begin{figure}[h]
\begin{center}
\includegraphics[width=8cm,page=6]{HDs.v2.pdf}
\end{center}
\setlength{\captionmargin}{50pt}
\caption{The handle diagram obtained by handle sliding and canceling.} 
\label{HD5}
\end{figure}

\begin{figure}[h]
\begin{center}
\includegraphics[width=8cm,page=7]{HDs.v2.pdf}
\end{center}
\setlength{\captionmargin}{50pt}
\caption{The handle diagram obtained by handle sliding.} 
\label{HD6}
\end{figure}

\begin{figure}[h]
\begin{center}
\includegraphics[width=8cm,page=8]{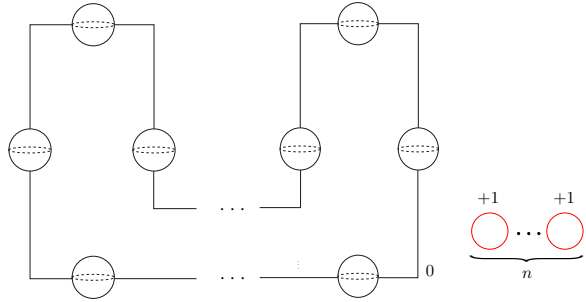}
\end{center}
\setlength{\captionmargin}{50pt}
\caption{The handle diagram of $\Sigma_p \times D^2 \#{n} \mathbb{C}P^2$.} 
\label{HD7}
\end{figure}

\begin{proof}
We obtain a handle diagram shown in Figure \ref{HD4} from the monodromy $t_{\delta_3}^{-n} \circ t_{\delta_1}^{\mp1} \circ t_{\delta_2}^{\pm1}\in \operatorname{Mod}(\Sigma_{p,2})$ in the case where $n>0$.
Here, all the red 2-handles have (+1)-framing.
As in Lemma~\ref{lem:complement}, Figure~\ref{HD5} represents the handle diagram obtained by performing the handle slide indicated by the arrow and canceling the 1-, 2-handle pair.
Then, by sliding the 2–handle corresponding to $\delta_3$ over the 0–framed 2–handles, we obtain Figure~\ref{HD6}.
Repeating this process $(n-1)$ times yields Figure~\ref{HD7}, which is a handle diagram of $\Sigma_p \times D^2 \# n \mathbb{C}P^2$. Since the same argument applies to the case where $n<0$, the  statement holds.
\end{proof} 

The following lemma says the way of converting Lefschetz fibrations into relative trisections.

\begin{lem}[{\cite[Corollary 17]{MR3770114}, \cite[Lemma 3.4]{MR3999550}}]\label{lem:LF_rtd}
Let $X$ be a 4-manifold admitting an achiral Lefschetz fibration over $D^2$ with regular fiber a genus-$p$ surface $\Sigma_{p,b}$ with b boundary components and $n$ singular fibers. Then, $X$ admits a $(p+n,2p+b-1;p,b)$-relative trisection whose induced open book decomposition is the same as that canonically induced from the Lefschetz fibration. Furthermore, the corresponding relative trisection diagram can be constructed explicitly from a monodromy of the Lefschetz fibration.
\end{lem}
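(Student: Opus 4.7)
The plan is to build the relative trisection directly from the handle decomposition induced by the achiral Lefschetz fibration, following the algorithm described in Subsection \ref{subsec:handle diagram}. The manifold $X$ admits a handle decomposition with one 0-handle, $2p+b-1$ 1-handles realizing $\Sigma_{p,b} \times D^2$, and $n$ 2-handles attached along the vanishing cycles $\gamma_i \subset \Sigma_{p,b}$ with framings $\mp 1$ relative to the page. First I would trisect the base disk $D^2$ by three circular sectors $D_1, D_2, D_3$ meeting at the origin with angular width $2\pi/3$ each, and set $X_i := f^{-1}(D_i)$. Away from the critical values, each $X_i$ is a copy of $\Sigma_{p,b} \times (\text{sector}) \cong \natural_{2p+b-1}(S^1 \times B^3) = Z_k$ with $k = 2p+b-1$.

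Next, after arranging by Hurwitz moves that the $n$ critical values all lie in the interior of a single sector, say $D_1$, the sector $X_1$ collects all the 2-handles while $X_2$ and $X_3$ remain product sectors. To put the decomposition into the balanced form required for a $(p+n,\,2p+b-1;\,p,b)$-relative trisection, I would perform $n$ successive relative stabilizations, one per singular fiber, each drilling a tube near a vanishing thimble that passes from $X_1$ through $X_2$ and $X_3$ and back. Each stabilization raises the genus of the central surface by one, and after all $n$ of them the central surface becomes $\Sigma_{p+n,b}$, each sector is a copy of $Z_k$ with $k = 2p+b-1$, and the pairwise intersections take the form $Y^{\pm}_{g,k;p,b}$ demanded by the definition. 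To confirm that the open book on $\partial X$ induced by this relative trisection (via Lemma \ref{lem:open book decomposition}) agrees with the canonical one from the Lefschetz fibration, I would track the boundary: the binding is $\partial \Sigma_{p,b}$, and the monodromy equals $t_{\gamma_n}^{\varepsilon_n} \cdots t_{\gamma_1}^{\varepsilon_1}$, since these are precisely the curves and signs along which the 2-handles are attached to $\partial(\Sigma_{p,b} \times D^2)$.

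For the explicit diagram, the $\alpha$-, $\beta$-, and $\gamma$-curves on the central surface $\Sigma_{p+n,b}$ can then be read off from the monodromy factorization: the $\alpha$-curves are the meridians coming from the 1-handles of $\Sigma_{p,b} \times D^2$ together with additional curves introduced by the $n$ stabilizations, and the $\beta$- and $\gamma$-curves are obtained from the $\alpha$-curves by successively applying the Dehn twists $t_{\gamma_i}^{\varepsilon_i}$ as prescribed by three reference arcs that split the factorization into three subwords. The hard part will be the careful bookkeeping through the $n$ stabilizations: verifying that each sector remains a genuine 1-handlebody of the correct genus and $k$-invariant, that the three Heegaard identifications on pairwise intersections match as required, and that the achiral signs $\varepsilon_i$ are correctly propagated to the diagrammatic description. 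Once this bookkeeping is in place, the diagram-level formulas follow mechanically from the monodromy factorization, recovering the algorithms of Castro \cite{MR3770114} and Castro-Ozbagci \cite{MR3999550}.
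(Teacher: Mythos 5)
The paper does not actually prove this lemma; it imports it from \cite[Corollary 17]{MR3770114} and \cite[Lemma 3.4]{MR3999550} and only points to Figure \ref{fig:convert} for the diagrammatic recipe. Your overall strategy --- three sectors over the base disk, one genus increase of the central surface per vanishing cycle, each sector ending up as $Z_{2p+b-1}=\natural_{2p+b-1}S^1\times B^3$ --- is indeed the strategy of those references, and your numerology $(p+n,2p+b-1;p,b)$ is right. But two steps as written would fail.

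First, your description of the diagram is wrong. A $(g,k;p,b)$-relative trisection diagram has exactly $g-p=n$ curves in each family, one per vanishing cycle, each supported in the annular neighborhood of $\gamma_i$ together with the tube added there (this is precisely Figure \ref{fig:convert}). The $2p+b-1$ meridians of the $1$-handles presenting the page are \emph{not} $\alpha$-curves: if they were, surgering $\Sigma_{p+n,b}$ along $\alpha$ would kill the page genus, the induced open book would no longer have page $\Sigma_{p,b}$, and the sectors could not all be $\natural_{2p+b-1}S^1\times B^3$. Likewise the $\beta$- and $\gamma$-curves are not global pushoffs of the $\alpha$-curves under the monodromy; they are local curves in the annulus-plus-tube whose winding records the sign $\varepsilon_i$. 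Second, the mechanism by which $X_1$ (carrying all $n$ Lefschetz $2$-handles) becomes a $1$-handlebody is not a sequence of stabilizations in the sense of Definition \ref{def:stabili}: that operation presupposes an existing trisection and a boundary-parallel arc in $X_i\cap X_j$, whereas your naive sector decomposition is not yet a (relative) trisection. What is actually needed is the Gay--Kirby absorption trick --- remove a neighborhood of the core of each $2$-handle (union an arc to the boundary) from the sector containing it and donate it to another sector --- and the verification that the resulting pieces are $1$-handlebodies and that the pairwise intersections are compression bodies of the right type uses the page framing $\mp1$ of the attaching circles in an essential way. This is exactly the ``hard bookkeeping'' you defer, but it is the content of the lemma rather than a routine check, so as a proof the proposal is incomplete even where it is not incorrect.
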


Figure \ref{fig:convert} describes a method for obtaining the relative trisection diagram in Lemma \ref{lem:LF_rtd}. See \cite[Figure 3]{MR3999550} for details.

\begin{figure}[h]
\begin{center}
\includegraphics[width=8cm, height=8cm, keepaspectratio, scale=1]{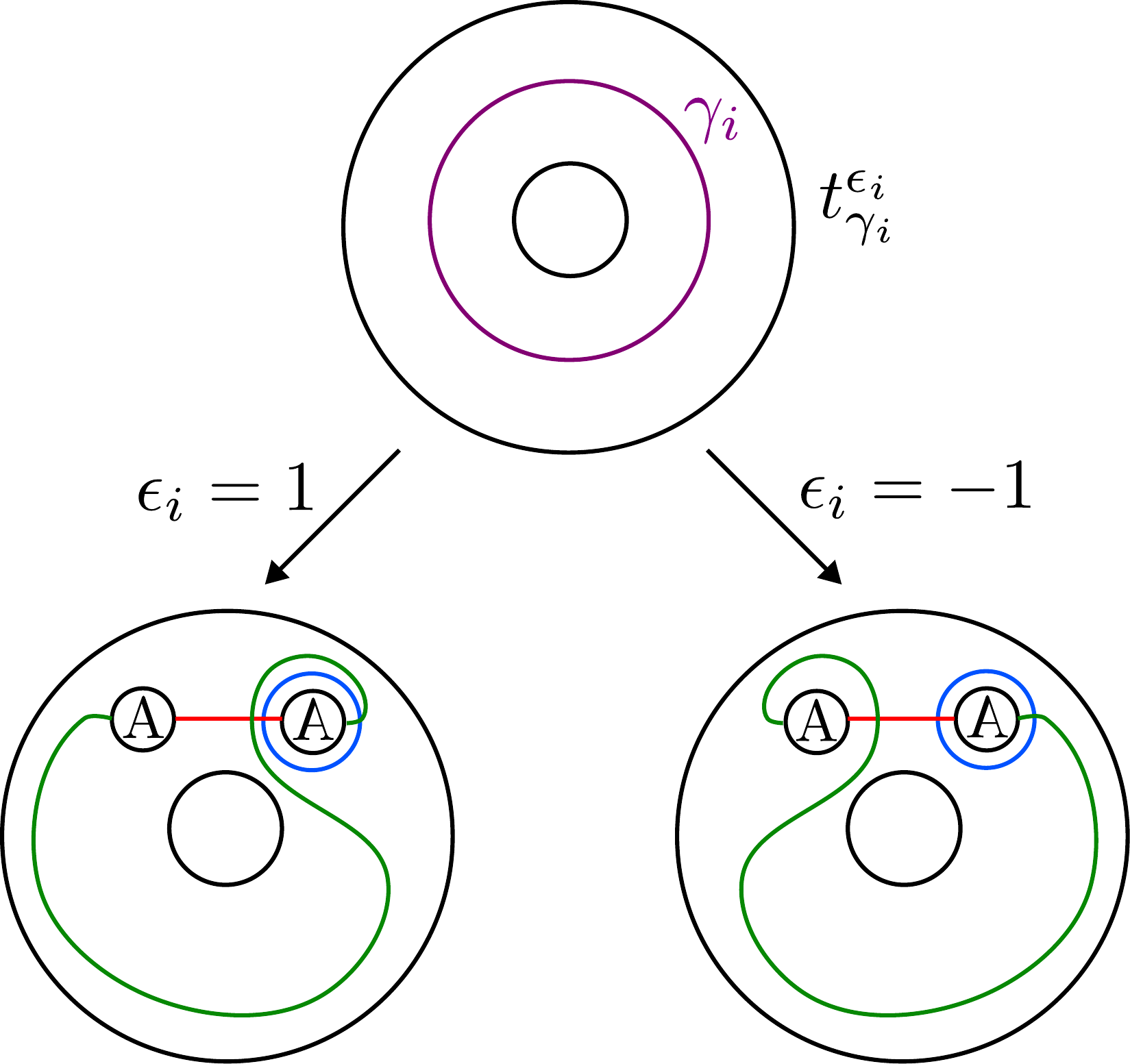}
\end{center}
\setlength{\captionmargin}{50pt}
\caption{A method for converting a vanishing cycle $\gamma_i$ corresponding to the Dehn twist $t_{\gamma_I}^{\epsilon_i}$ that is a part of a monodromy into a relative trisection. The top figure describes an annular neighborhood of $\gamma_i$.}
\label{fig:convert}
\end{figure}

The following theorem is the first main theorem.

\begin{thm}\label{thm:-n-section}
Let $X$ be a closed 4-manifold admitting a genus-$p$ achiral Lefschetz fibration over $S^2$ with $m$ singular fibers and a $(-n)$-section, where $n$ is any integer. Then, $X \# n \mathbb{C}P^2$ admits a $(2p+m+\abs{n}+5, 2p+1)$-trisection whose corresponding trisection diagram can be constructed explicitly from a monodromy of the Lefschetz fibration of $X$.
\end{thm}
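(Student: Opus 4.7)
The plan is to decompose $X \# n\mathbb{C}P^2$ into the complement of a regular-fiber neighborhood and a fiber-neighborhood-summed-with-$n\mathbb{C}P^2$ piece, equip each piece with an achiral Lefschetz fibration over $D^2$ using Lemmas~\ref{lem:complement} and~\ref{lem:fiber}, verify that the induced open book decompositions on the common boundary agree up to orientation reversal, convert both Lefschetz fibrations to relative trisections via Lemma~\ref{lem:LF_rtd}, and finally glue via Lemma~\ref{lem:gluing}.

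Concretely, fix a tubular neighborhood $\nu(F) \cong \Sigma_p \times D^2$ of a regular fiber, so that
\[
X \# n\mathbb{C}P^2 \;\cong\; (X - \nu(F)) \cup_\partial (\nu(F) \# n\mathbb{C}P^2).
\]
Applying Lemma~\ref{lem:complement} with one sign choice and Lemma~\ref{lem:fiber} with the opposite sign choice, we obtain achiral Lefschetz fibrations over $D^2$ with common regular fiber $\Sigma_{p,2}$ and monodromies $\varphi \circ t_{\delta_1}^{+1} \circ t_{\delta_2}^{-1}$ on $X - \nu(F)$ and $t_{\delta_3}^{-n} \circ t_{\delta_1}^{-1} \circ t_{\delta_2}^{+1}$ on $\nu(F) \# n\mathbb{C}P^2$. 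Both induce open books with page $\Sigma_{p,2}$ on the common boundary $\Sigma_p \times S^1$. Because $\delta_1$ and $\delta_2$ are boundary-parallel and therefore central in $\operatorname{Mod}(\Sigma_{p,2})$, the product of the two monodromies simplifies to $\varphi \circ t_{\delta_3}^{-n}$. The existence of the $(-n)$-section in the original Lefschetz fibration of $X$ forces, after identifying $\delta_3$ with the appropriate curve in the page, the relation $\varphi = t_{\delta_3}^n$ in $\operatorname{Mod}(\Sigma_{p,2})$, so the product is the identity. Consequently, the two boundary open book decompositions are identified by an orientation-reversing diffeomorphism.

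Invoking Lemma~\ref{lem:LF_rtd}, the Lefschetz fibration on $X - \nu(F)$ (with $m+2$ singular fibers) yields a $(p+m+2,\, 2p+1;\, p,\, 2)$-relative trisection, and the one on $\nu(F) \# n\mathbb{C}P^2$ (with $|n|+2$ singular fibers) yields a $(p+|n|+2,\, 2p+1;\, p,\, 2)$-relative trisection, each with an explicit relative trisection diagram determined by its monodromy. Lemma~\ref{lem:gluing} then produces a trisection of $X \# n\mathbb{C}P^2$. The central surface of this trisection is the union of the two central surfaces (each with $2$ boundary components) along their boundaries, giving a closed surface of genus $(p+m+2)+(p+|n|+2)+(2-1) = 2p+m+|n|+5$; the value $k = 2p+1$ is verified from $\chi(X \# n\mathbb{C}P^2) = 4 - 4p + m + |n| = 2 + g - 3k$. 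An explicit trisection diagram is obtained by producing arced relative trisection diagrams on both pieces via Theorem~\ref{thm:algorithm} and gluing them using Proposition~2.12 of~\cite{MR3999550}.

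The main obstacle is the verification of open book compatibility: one must pin down which boundary component of the page $\Sigma_{p,2}$ is associated to the section, identify the curve $\delta_3$ in Lemma~\ref{lem:fiber} with the corresponding curve arising from the $(-n)$-section in Lemma~\ref{lem:complement}, and ensure that the two introduced boundary-parallel twists cancel correctly under the gluing. The identification $\varphi = t_{\delta_3}^n$ in $\operatorname{Mod}(\Sigma_{p,2})$ is the mapping-class-group incarnation of the geometric condition that $X$ possesses a $(-n)$-section; once this is made precise, the remaining steps are direct applications of the preceding lemmas and theorem.
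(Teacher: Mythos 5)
Your proposal is correct and follows essentially the same route as the paper: the same decomposition of $X \# n\mathbb{C}P^2$ into $X-\Sigma_p\times D^2$ and $\Sigma_p\times D^2 \# n\mathbb{C}P^2$, the same appeal to Lemmas~\ref{lem:complement}, \ref{lem:fiber}, \ref{lem:LF_rtd} and~\ref{lem:gluing}, and the same use of the relation $\varphi = t_{\delta_3}^n$ coming from the $(-n)$-section to match the boundary open books. Your explicit checks of the genus count and of $k=2p+1$ via the Euler characteristic are consistent with, and slightly more detailed than, what the paper records.
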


\begin{proof}
We decompose $X \# n \mathbb{C}P^2$ into $X-\Sigma_p \times D^2$ and $\Sigma_p \times D^2 \# n \mathbb{C}P^2$, i.e.
\[
X \# n \mathbb{C}P^2 = (X-\Sigma_p \times D^2) \cup_\partial (\Sigma_p \times D^2 \# n \mathbb{C}P^2).
\]
Let $\varphi$ be a monodromy of the achiral Lefschetz fibration of $X$.
From Lemmas \ref{lem:complement} and \ref{lem:LF_rtd}, $X-\Sigma_p \times D^2$ admits a $(p+m+2,2p+1;p,2)$-relative trisection whose monodromy of the induced open book decomposition is $\varphi \circ t_{\delta_1}^{+1} \circ t_{\delta_2}^{-1}\in \operatorname{Mod}(\Sigma_{p,2})$. Then, since the Lefschetz fibration has a $(-n)$-section, we have $\varphi = t_{\delta_3}^n\in \operatorname{Mod}(\Sigma_{p,1})$. Thus, the monodromy of the induced open book decomposition is $t_{\delta_3}^n \circ t_{\delta_1}^{+1} \circ t_{\delta_2}^{-1}\in \operatorname{Mod}(\Sigma_{p,2})$. Similar to this, from Lemmas \ref{lem:fiber} and \ref{lem:LF_rtd}, $\Sigma_p \times D^2 \# n \mathbb{C}P^2$ admits a $(p+\abs{n}+2,2p+1;p,2)$-relative trisection whose monodromy of the induced open book decomposition is $t_{\delta_3}^{-n} \circ t_{\delta_1}^{-1} \circ t_{\delta_2}^{+1}\in \operatorname{Mod}(\Sigma_{p,2})$. Thus, by Lemma \ref{lem:gluing}, we can glue these two relative trisections to obtain a $(2p+m+\abs{n}+5, 2p+1)$-trisection of $X \# n \mathbb{C}P^2$. This completes the proof.
\end{proof}

The following theorem is the second main theorem, which constructs partially a trisection of the fiber sum.

\begin{thm}\label{thm:fibersum}
Let $X_i$ be closed 4-manifolds admitting genus-$p$ achiral Lefschetz fibrations over $S^2$ with $n_i$ singular fibers for $i=1,2$. Suppose that the Lefschetz fibrations of $X_1$ and $X_2$ have $(-n)$- and $n$-sections, respectively, where $n$ is any positive integer. Then, the fiber sum $X_1 \#_F X_2$ admits a $(2p+n_1+n_2+5, 2p+1)$-trisection whose corresponding trisection diagram can be constructed explicitly from monodromies of the Lefschetz fibrations of $X_1$ and $X_2$.
\end{thm}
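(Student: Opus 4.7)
The plan is to mirror the proof of Theorem~\ref{thm:-n-section}, replacing the role of $\Sigma_p \times D^2 \# n \mathbb{C}P^2$ (handled there by Lemma~\ref{lem:fiber}) with the second complement $X_2 - \Sigma_p \times D^2$, which is again covered by Lemma~\ref{lem:complement}. First I would write
\[
X_1 \#_F X_2 \;=\; (X_1 - \Sigma_p \times D^2) \cup_\partial (X_2 - \Sigma_p \times D^2),
\]
where the gluing along the common boundary $\Sigma_p \times S^1$ is the standard orientation-reversing fiber-sum diffeomorphism.

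Next, let $\varphi_i \in \operatorname{Mod}(\Sigma_p)$ denote the monodromy of $f_i$ for $i=1,2$. Since $f_1$ has a $(-n)$-section and $f_2$ has an $n$-section, the discussion of Subsection~2.3 forces $\varphi_1 = t_{\delta_3}^{\,n}$ and $\varphi_2 = t_{\delta_3}^{-n}$ in $\operatorname{Mod}(\Sigma_{p,1})$, where $\delta_3$ is the boundary-parallel curve arising from the section. I would then apply Lemma~\ref{lem:complement} to each side with \emph{opposite} sign conventions, so that the induced boundary monodromies become mutually inverse: arrange $X_1 - \Sigma_p \times D^2$ to be an achiral Lefschetz fibration over $D^2$ with fiber $\Sigma_{p,2}$ and monodromy $t_{\delta_3}^{\,n} \circ t_{\delta_1}^{+1} \circ t_{\delta_2}^{-1}$, and $X_2 - \Sigma_p \times D^2$ with monodromy $t_{\delta_3}^{-n} \circ t_{\delta_1}^{-1} \circ t_{\delta_2}^{+1}$. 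Because the curves $\delta_1, \delta_2, \delta_3$ are pairwise disjoint, the corresponding Dehn twists commute, and these two monodromies are indeed inverse to each other in $\operatorname{Mod}(\Sigma_{p,2})$.

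After that, I would invoke Lemma~\ref{lem:LF_rtd} to convert each Lefschetz fibration over $D^2$ into a $(p+n_i+2,\,2p+1;\,p,2)$-relative trisection whose induced boundary open book decomposition coincides with the one canonically furnished by the Lefschetz fibration. Lemma~\ref{lem:gluing} then applies: the inverse monodromies together with the common page $\Sigma_{p,2}$ exhibit the two open books as matching under the fiber-sum gluing. The resulting trisection has genus $(p+n_1+2) + (p+n_2+2) + 1 = 2p+n_1+n_2+5$ and $k = 2p+1$, as asserted, and the explicit trisection diagram is assembled from the two relative trisection diagrams via the arc procedure of Theorem~\ref{thm:algorithm}, exactly as in Theorem~\ref{thm:-n-section}.

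The main subtle step will be verifying that the orientation-reversing fiber-sum gluing diffeomorphism on $\Sigma_p \times S^1$ indeed carries one induced boundary open book onto the other. The key observation is the canonical identification of the new boundary components: $\delta_3$ corresponds, on each side, to the boundary curve inherited from the (blown-down) section, while $\delta_1$ and $\delta_2$ are the two boundary curves introduced by Lemma~\ref{lem:complement}. Under this identification, together with the commutativity of the three boundary-parallel Dehn twists, the inversion of monodromies is precisely what the orientation reversal of the gluing requires, so Lemma~\ref{lem:gluing} produces the desired trisection.
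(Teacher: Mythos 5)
Your proposal is correct and follows essentially the same route as the paper: decompose the fiber sum into the two fiber complements, use Lemma~\ref{lem:complement} with opposite sign conventions together with the section relations $\varphi_1=t_{\delta_3}^{n}$, $\varphi_2=t_{\delta_3}^{-n}$ to get matching boundary open books, convert via Lemma~\ref{lem:LF_rtd}, and glue by Lemma~\ref{lem:gluing}. (Your citation of Lemma~\ref{lem:complement} for both pieces is in fact the right one; the paper's proof text refers to Lemma~\ref{lem:fiber} here, which appears to be a typo.)
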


\begin{proof}
By the definition of the fiber sum, we can decompose the total space $X_1 \#_F X_2$ as follows:
\[
X_1 \#_F X_2 = (X_1-\Sigma_p \times D^2) \cup_\partial (X_2-\Sigma_p \times D^2).
\]
Similar to the proof of Theorem \ref{thm:-n-section}, from Lemmas \ref{lem:fiber} and \ref{lem:LF_rtd}, $X_1-\Sigma_p \times D^2$ (resp. $X_2-\Sigma_p \times D^2$) admits a $(p+n_1+2,2p+1;p,2)$- (resp. $(p+n_2+2,2p+1;p,2)$-) relative trisection whose monodromy of the induced open book decomposition is $t_{\delta_3}^n \circ t_{\delta_1}^{+1} \circ t_{\delta_2}^{-1}\in \operatorname{Mod}(\Sigma_{p,2})$ (resp. $t_{\delta_3}^{-n} \circ t_{\delta_1}^{-1} \circ t_{\delta_2}^{+1}\in \operatorname{Mod}(\Sigma_{p,2})$). Thus, by Lemma \ref{lem:gluing}, we can glue these two relative trisections to obtain a $(2p+n_1+n_2+5, 2p+1)$-trisection of $X_1 \#_F X_2$. This completes the proof.
\end{proof}

\begin{rem}
If a non-trivial chiral Lefschetz fibration over $S^2$ has an $n$-section, then $n \le -1$ \cite{McDuff1990,Stipsicz1999}.
\end{rem}

\begin{dfn}
Let $X$ be a simply-connected closed 4-manifold. We say that $X$ is {\it almost completely decomposable} if $X \# \mathbb{C}P^2$ is diffeomorphic to $k \mathbb{C}P^2 \# \ell \overline{\mathbb{C}P^2}$ for some non-negative integers $k$ and $\ell$.
\end{dfn}

\begin{cor}\label{cor:ACD}
Let $X$ be an almost completely decomposable 4-manifold admitting a genus-$p$ achiral Lefschetz fibration over $S^2$ with $m$ singular fibers and a $(-n)$-section, where $n$ is any positive integer. Then, for some non-negative integers $k$ and $\ell$, $k \mathbb{C}P^2 \# \ell \overline{\mathbb{C}P^2}$ admits a $(2p+m+n+5, 2p+1)$-trisection whose corresponding trisection diagram can be constructed explicitly from a monodromy of the Lefschetz fibration of $X$.
\end{cor}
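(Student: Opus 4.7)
The plan is to derive Corollary \ref{cor:ACD} as a direct combination of Theorem \ref{thm:-n-section} and the definition of almost complete decomposability. No new geometric construction is needed; the content is essentially a bookkeeping identification of 4-manifolds together with the observation that a trisection diagram of one member of a diffeomorphism class serves as a trisection diagram for every other member.

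First, since $n$ is a positive integer, I would apply Theorem \ref{thm:-n-section} to produce a $(2p + m + \abs{n} + 5,\, 2p+1)$-trisection of $X \# n\mathbb{C}P^2$ whose corresponding trisection diagram is constructed explicitly from a monodromy factorization of the given achiral Lefschetz fibration of $X$. Because $\abs{n} = n$ under the positivity hypothesis, the genus of this trisection already equals the value $2p + m + n + 5$ asserted in the corollary.

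Next, the assumption that $X$ is almost completely decomposable supplies non-negative integers $k_0, \ell_0$ with $X \# \mathbb{C}P^2 \cong k_0 \mathbb{C}P^2 \# \ell_0 \overline{\mathbb{C}P^2}$. Taking the connected sum of both sides with $n-1$ copies of $\mathbb{C}P^2$, which is well-defined since $n \geq 1$, then yields
\[
X \# n\mathbb{C}P^2 \;\cong\; (k_0 + n - 1)\mathbb{C}P^2 \# \ell_0 \overline{\mathbb{C}P^2}.
\]
Setting $k := k_0 + n - 1$ and $\ell := \ell_0$, both non-negative, this diffeomorphism transports the trisection of the previous step to a trisection of $k \mathbb{C}P^2 \# \ell \overline{\mathbb{C}P^2}$ of the same type $(2p + m + n + 5,\, 2p+1)$, with a diagram coming from the same monodromy data.

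The only subtlety, which might be mistaken for an obstacle, is that the diffeomorphism $X \# n\mathbb{C}P^2 \cong k \mathbb{C}P^2 \# \ell \overline{\mathbb{C}P^2}$ is supplied only abstractly by the almost-complete-decomposability hypothesis, so the transported trisection diagram is not guaranteed to coincide with, or be visibly reducible to, the standard $(k+\ell, 0)$-diagram of Figure \ref{fig:CP^2s}. This comparison is not required for the statement of the corollary and is precisely the content of Question \ref{que:stabili}; once it is set aside, the proof is just the chain Theorem \ref{thm:-n-section} $\Longrightarrow$ diffeomorphism identification $\Longrightarrow$ transported trisection described above.
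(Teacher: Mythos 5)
Your proposal is correct and takes essentially the same route as the paper: apply Theorem \ref{thm:-n-section} to get the $(2p+m+n+5,2p+1)$-trisection of $X \# n\mathbb{C}P^2$, then invoke almost complete decomposability to identify $X \# n\mathbb{C}P^2$ with $k\mathbb{C}P^2 \# \ell\overline{\mathbb{C}P^2}$. Your extra step of spelling out $k = k_0 + n - 1$, $\ell = \ell_0$ by summing with $n-1$ further copies of $\mathbb{C}P^2$ is a harmless (and slightly more explicit) elaboration of what the paper leaves implicit.
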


\begin{proof}
From Theorem \ref{thm:-n-section}, we have a $(2p+m+n+5, 2p+1)$-trisection of $X \#n \mathbb{C}P^2$. Since $X$ is almost completely decomposable, $X \#n \mathbb{C}P^2$ is diffeomorphic to $k \mathbb{C}P^2 \# \ell \overline{\mathbb{C}P^2}$ for some non-negative integers $k$ and $\ell$. This completes the proof.
\end{proof}

\begin{rem}
It is known \cite{zbMATH03505878, zbMATH03608421, zbMATH03729115, zbMATH00016235, zbMATH07206908} that the elliptic surface $E(n)$, the logarithmic transformation $E(n)_{p,q}$ of $E(n)$ and the knot surgered 4-manifold $E(n)_K$ of $E(n)$ along a knot $K$ are almost completely decomposable for each $n \ge 1$.
\end{rem}

\begin{exm}\label{exm:E(n)}
For each positive integer $n$, the elliptic surface $E(n)$ admits a chiral Lefschetz fibration over $S^2$ with monodromy $(t_bt_a)^{6n}=\operatorname{id}\in \operatorname{Mod}(\Sigma_{1})$ and a $(-n)$-section (i.e. $(t_bt_a)^{6n}=t_{\delta}^n\in \operatorname{Mod}(\Sigma_{1,1})$), where the boundary component $\delta$ and the simple closed curves $a$ and $b$ are the ones shown in Figure \ref{scc0_3}. Then, from Theorem \ref{thm:-n-section}, we can construct a $(13n+7,3)$-trisection diagram of $E(n) \# n \mathbb{C}P^2$ as follows: By Lemma \ref{lem:LF_rtd}, we can construct a $(12n+3,3;1,2)$-relative trisection diagram of $E(n)-T^2 \times D^2$ as in Figure \ref{fig:E(n)-T2D2_rtd}. We can also construct a $(n+3,3;1,2)$-relative trisection diagram of $T^2 \times D^2 \# n \mathbb{C}P^2$ as in Figure \ref{fig:T2D2nCP2_rtd} by Lemma \ref{lem:LF_rtd}. Thus, by gluing these two relative trisection diagrams with arcs shown in Figures \ref{fig:E(n)-T2D2_artd} and \ref{fig:T2D2nCP2_artd}, we can obtain the $(13n+7,3)$-trisection diagram of $E(n) \# n \mathbb{C}P^2$ as in Figure \ref{fig:E(n)nCP2}. It is known that $E(n) \# n \mathbb{C}P^2$ is diffeomorphic to $k \mathbb{C}P^2 \# \ell \overline{\mathbb{C}P^2}$, where $k=3n-1$ and $\ell=10n-1$. 
\end{exm}

\begin{figure}[h]
\begin{center}
\includegraphics[width=5cm,page=1]{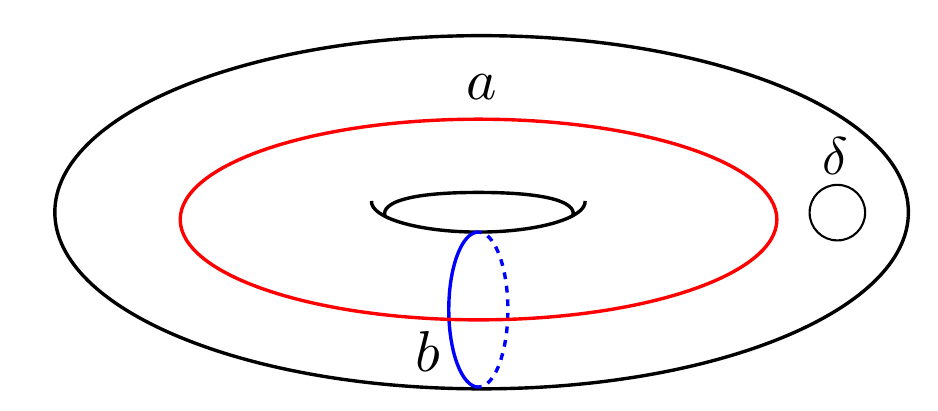}
\end{center}
\setlength{\captionmargin}{40pt}
\caption{The boundary $\delta$ and the simple closed curves $a,b$ on $\Sigma_{1,1}$.}
\label{scc0_3}
\end{figure}

\begin{figure}[htbp]
  \centering
  \begin{minipage}{0.45\textwidth}
    \centering
    \includegraphics[width=\textwidth]{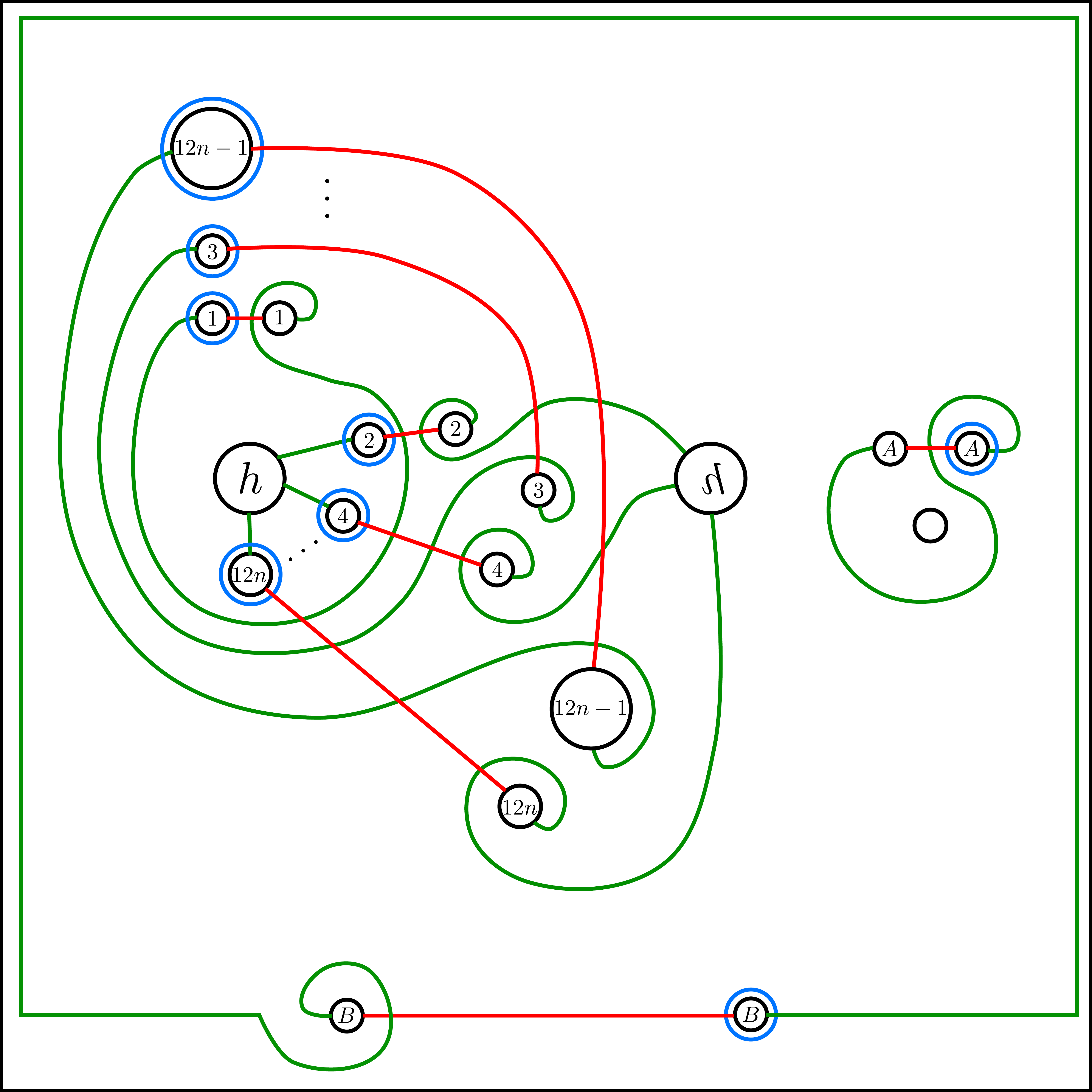}
    \setlength{\captionmargin}{1pt}
    \caption{A $(12n+3,3;1,2)$-relative trisection diagram of $E(n)-T^2 \times D^2$ with monodromy $(t_b t_a)^{6n} \circ t_{\delta_1} \circ t_{\delta_2}^{-1} \in \operatorname{Mod}(\Sigma_{p,2})$.}
    \label{fig:E(n)-T2D2_rtd}
  \end{minipage}
  \hfill
  \begin{minipage}{0.45\textwidth}
    \centering
    \includegraphics[width=\textwidth]{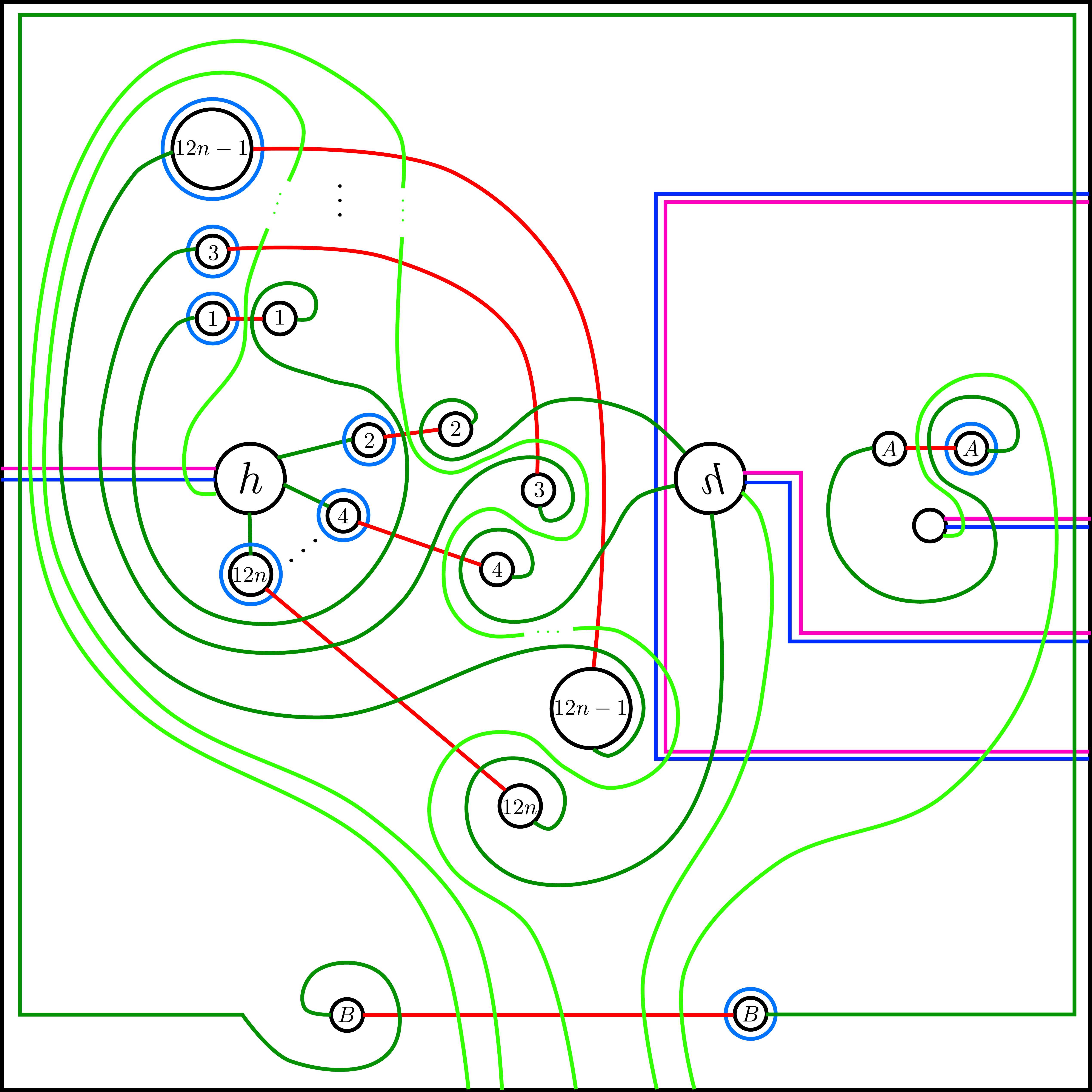}
    \setlength{\captionmargin}{5pt}
    \caption{An arced relative trisection diagram of $E(n)-T^2 \times D^2$ obtained from Figure \ref{fig:E(n)-T2D2_rtd}.}
    \label{fig:E(n)-T2D2_artd}
  \end{minipage}
\end{figure}

\begin{figure}[htbp]
  \centering
  \begin{minipage}{0.45\textwidth}
    \centering
    \includegraphics[width=\textwidth]{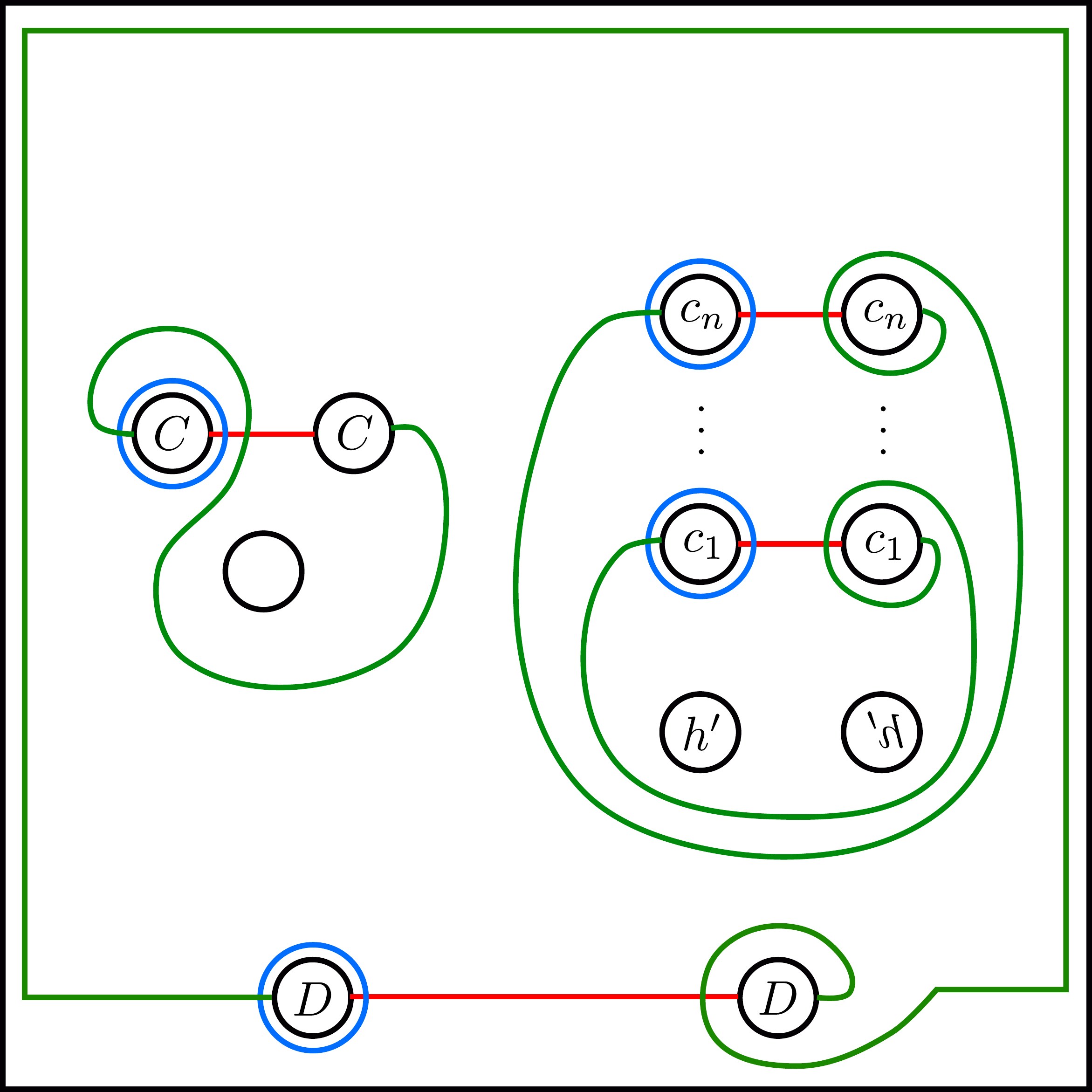}
    \setlength{\captionmargin}{2pt}
    \caption{A $(n+3,3;1,2)$-relative trisection diagram of $T^2 \times D^2 \# n \mathbb{C}P^2$ with monodromy $t_{\delta_3}^{-n} \circ t_{\delta_1}^{-1} \circ t_{\delta_2} \in \operatorname{Mod}(\Sigma_{p,2})$.}
    \label{fig:T2D2nCP2_rtd}
  \end{minipage}
  \hfill
  \begin{minipage}{0.45\textwidth}
    \centering
    \includegraphics[width=\textwidth]{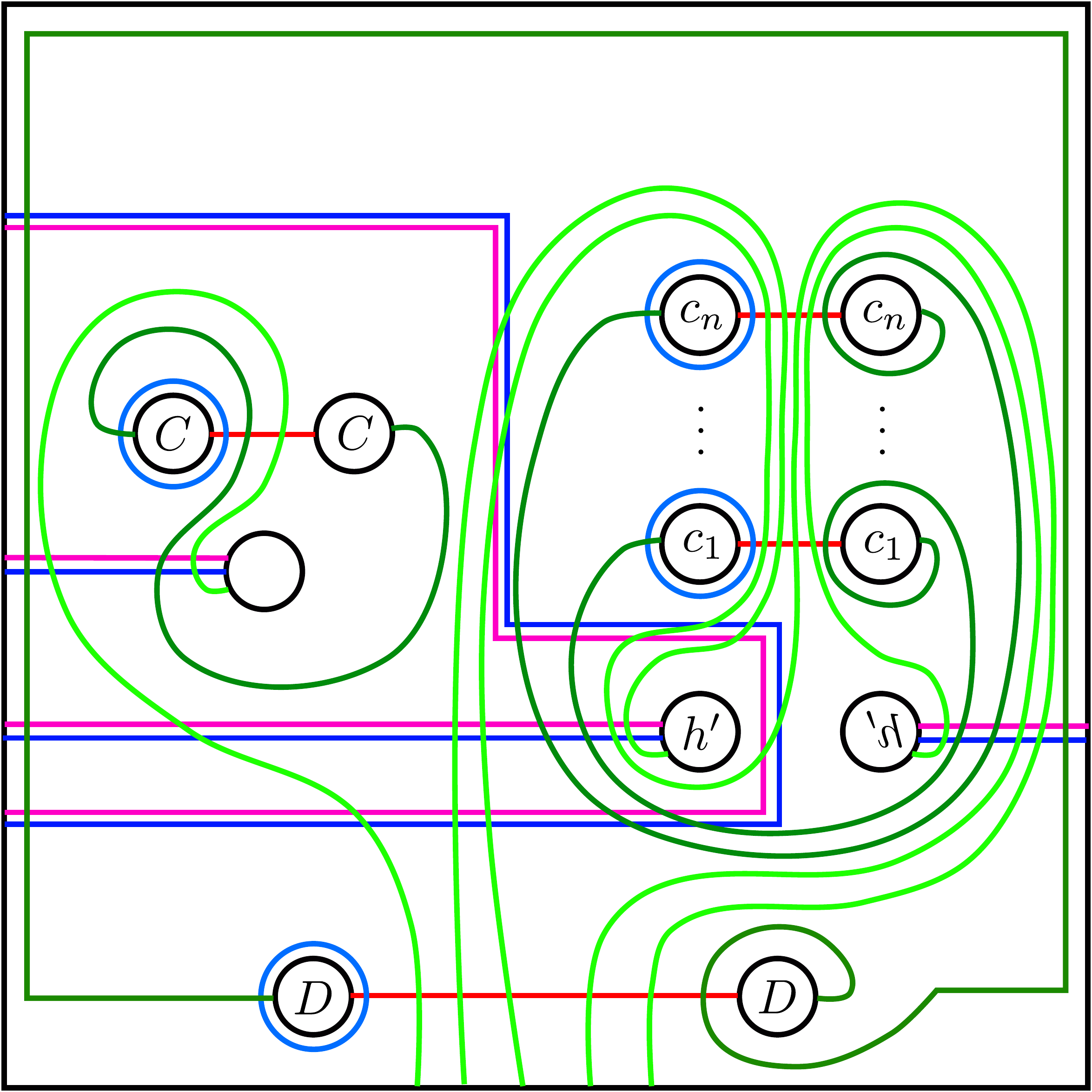}
    \setlength{\captionmargin}{5pt}
    \caption{An arced relative trisection diagram of $T^2 \times D^2 \# n \mathbb{C}P^2$ obtained from Figure \ref{fig:T2D2nCP2_rtd}.}
    \label{fig:T2D2nCP2_artd}
  \end{minipage}
\end{figure}

\begin{figure}[h]
\begin{center}
\includegraphics[width=12.5cm, height=12.5cm, keepaspectratio, scale=1]{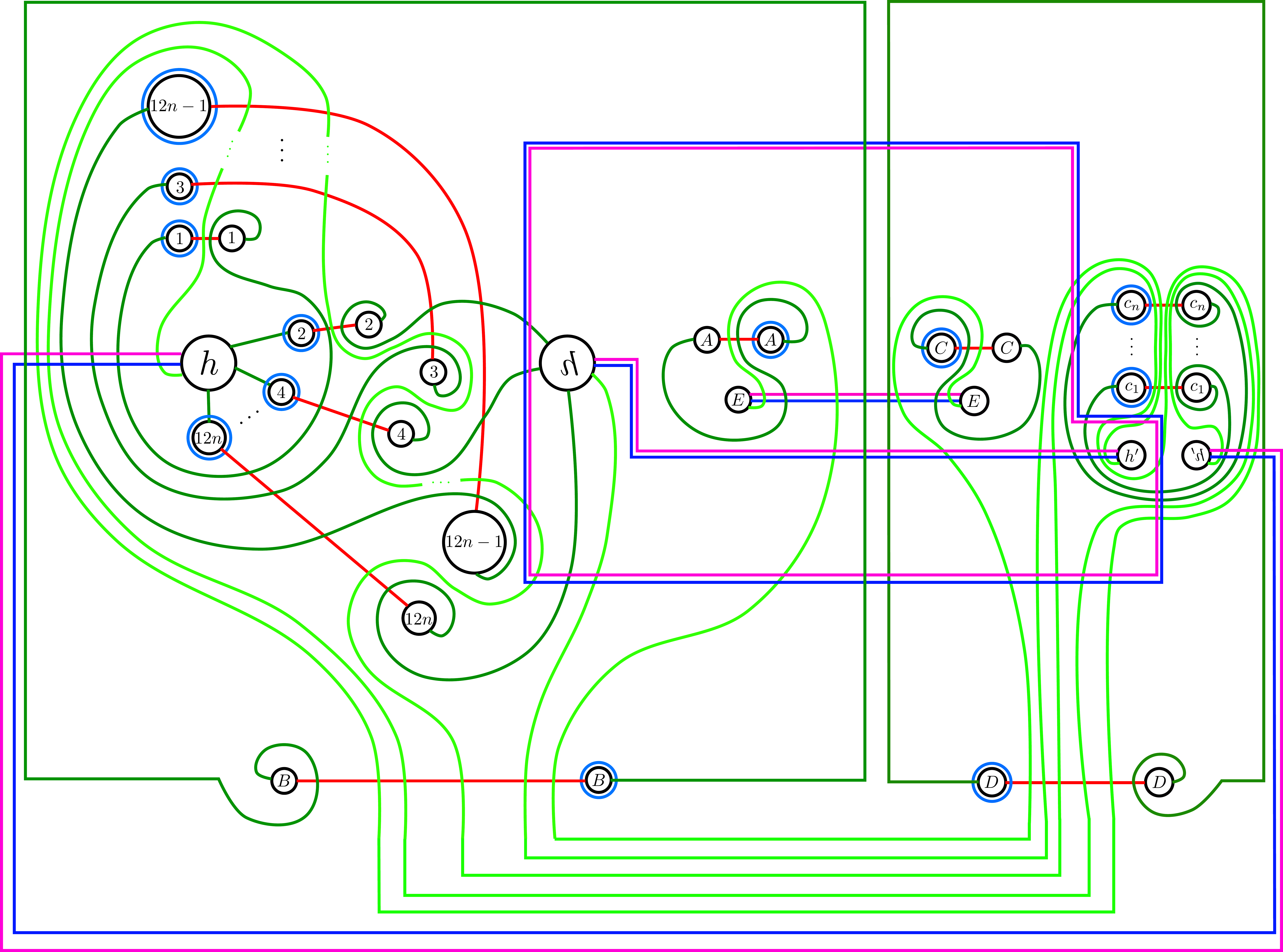}
\end{center}
\setlength{\captionmargin}{50pt}
\caption{A $(13n+7,3)$-trisection diagram of $E(n) \# n \mathbb{C}P^2 \cong (3n-1)\mathbb{C}P^2 \# (10n-1)\overline{\mathbb{C}P^2}$.}
\label{fig:E(n)nCP2}
\end{figure}

\begin{exm}\label{exm:E(1)_2,3}
Matsumoto~\cite{Matsumoto} constructed a genus-$2$ Lefschetz fibration over $S^2$. 
This was later generalized by Korkmaz~\cite{Korkmaz} and Gurtas~\cite{Gurtas} to higher genera. 
In particular, Gurtas provided new positive factorizations in mapping class groups, and computed the topology and the signature of the resulting Lefschetz fibrations. 

Using this framework, Fintushel and Stern~\cite{FintushelStern} described the knot-surgered $4$-manifold $E(n)_K$ as a twisted fiber sum of two copies of a genus-$(2h+n-1)$ Lefschetz fibration $f_{n,h} \colon M(n,h) \;\to\; S^2$ given by Gurtas, where $M(n,h)$ is diffeomorphic to $(S^2\times \Sigma_h)\#4n\overline{\mathbb{C}P^2}$.

Yun~\cite{Yun2006} obtained an explicit monodromy factorization of $f_{n,h}\colon M(n,h) \rightarrow S^2$ as follows;
\[
\eta_{n-1,h}^2=t_{\delta}\in \operatorname{Mod}(\Sigma_{2h+n-1,1}).
\]
Here, $\eta_{n-1,h}$ is defined by 
\[
\eta_{n-1,h}=t_{c_{2n-2}}t_{c_{2n-3}}\dots t_{c_{2}}t_{c_{1}}t_{c_{1}}t_{c_{2}}\dots t_{c_{2n-3}}t_{c_{2n-2}}t_{D_{0}}t_{D_{1}}\dots t_{D_{2h}}t_{c_{2n-1}},
\]
where the simple closed curves $\delta, c_1, \dots, c_{2n-1}, D_0, \dots, D_{2h}$ are the ones shown in Figure \ref{scc0_2}. 

\begin{figure}[h]
\begin{center}
\includegraphics[width=12cm,page=1]{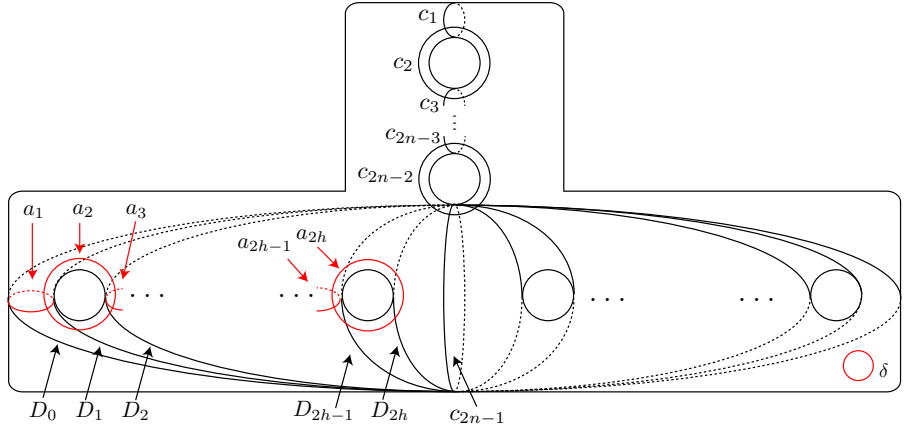}
\end{center}
\setlength{\captionmargin}{49pt}
\caption{The boundary $\delta$ and the simple closed curves $c_1, \dots, c_{2n-1}, D_0, \dots, D_{2h}, a_1, \dots, a_{2h}$ on $\Sigma_{2h+n-1,1}$.}
\label{scc0_2}
\end{figure}

Note that the relation above lives in 
$\operatorname{Mod}(\Sigma_{2h+n-1,1})$ and the Dehn twist $t_\delta$ along the boundary component is non–trivial.
After capping off the boundary component corresponding to $\delta$,
the twist $t_\delta$ becomes trivial in 
$\operatorname{Mod}(\Sigma_{2h+n-1})$.
Therefore the global monodromy of the Lefschetz fibration
over \(S^2\) yields a factorization of the identity:
\[
\eta_{n-1,h}^2
= \operatorname{id} \in \operatorname{Mod}(\Sigma_{2h+n-1}).
\]
Moreover, this Lefschetz fibration admits a section of self-intersection number $-2$.
Indeed, the relation $\eta_{n-1,h}^2 = t_{\delta} \in \operatorname{Mod}(\Sigma_{2h+n-1,1})$
shows that the boundary twist $t_{\delta}$ appears with exponent $+1$,
which corresponds to a section of self-intersection number $-1$.
After taking the twisted fiber sum of two copies, the resulting Lefschetz fibration
admits a section of self-intersection number $-2$.

Yun~\cite{Yun} also gave an explicit monodromy factorization for the 
Fintushel--Stern Lefschetz fibration
\[
f_{n,h}\#_{\Phi_K}f_{n,h} \colon M(n,h)\#_{\Phi_K} M(n,h) \to S^2,
\]
where $M(n,h)\#_{\Phi_K} M(n,h)$ is diffeomorphic to $E(n)_K$.
In particular, when $K$ is a fibered knot of genus $h$, 
$E(n)_K$ admits a genus-$(2h+n-1)$ Lefschetz fibration over $S^2$.
We define
\[
\Phi_K=(\varphi_K\# \operatorname{id}\#\operatorname{id})\times \operatorname{id}_{S^1}
:(\Sigma_h\#\Sigma_{n-1}\#\Sigma_h)\times S^1\to
(\Sigma_h\#\Sigma_{n-1}\#\Sigma_h)\times S^1,
\]
where $\varphi_K\in\operatorname{Mod}(\Sigma_{h,1})$ is a monodromy of the genus-$h$
fibered knot $K$ such that
\[
S^3\setminus \nu(K)\cong (\Sigma_{h,1}\times I)/((x,1)\sim(\varphi_K(x),0)).
\]
Here we define $\varphi_K \# \operatorname{id} \# \operatorname{id} \in
\operatorname{Mod}(\Sigma_h \# \Sigma_{n-1} \# \Sigma_h)$
to be the mapping class represented by a diffeomorphism obtained by extending
$\varphi_K$ by the identity outside a subsurface identified with $\Sigma_h$.
Although $\Phi_K$ is defined as a diffeomorphism of
$(\Sigma_h\#\Sigma_{n-1}\#\Sigma_h)\times S^1$,
its action on the monodromy factorization is given by the restriction
to the page, namely by the mapping class
$\varphi_K \# \operatorname{id} \# \operatorname{id} \in \operatorname{Mod}(\Sigma_{2h+n-1,1})$.
The diffeomorphism $\Phi_K$ determines an action on
$\operatorname{Mod}(\Sigma_{2h+n-1,1})$,
which on Dehn twists is given by sending
$t_\gamma$ to $t_{(\varphi_K\#\operatorname{id}\#\operatorname{id})(\gamma)}$.
This action is supported on the first $\Sigma_h$-summand.

With this notation, the global monodromy of the Lefschetz fibration on $E(n)_K$
can be written as
\[
\Phi_K(\eta_{n-1,h})^2\cdot\eta_{n-1,h}^2
= t_{\delta}^2
\in \operatorname{Mod}(\Sigma_{2h+n-1,1}).
\]

If $n=1$ and $K$ is the trefoil knot $T(2,3)$, the monodromy is 
\[
\Phi_{T(2,3)} := t_{a_2}^{-1}t_{a_1}^{-1}\in \operatorname{Mod}(\Sigma_2),
\]
where simple closed curves $a_1$ and $a_2$ are shown in Figure~\ref{scc0_2}. 
The Lefschetz fibration $f_{2,3}\#_{\Phi_{T(2,3)}}f_{2,3}\colon E(1)_{T(2,3)} \to S^2$
whose global monodromy is 
\[
(t_{D_0}t_{D_1}t_{D_2}t_{c_1})^2 
(t_{\Phi_{T(2,3)}(D_0)}t_{\Phi_{T(2,3)}(D_1)}t_{\Phi_{T(2,3)}(D_2)}t_{\Phi_{T(2,3)}(c_1)})^2=t_{\delta}^{2} 
\in \operatorname{Mod}(\Sigma_{2,1})
\]
is built from the twisted fiber sum of two copies of the Matsumoto fibration. 
The corresponding simple closed curves $D_0,$ $D_1,$ $D_2,$ $c_1,$ $\Phi_{T(2,3)}(D_0),$ $\Phi_{T(2,3)}(D_1),$ $\Phi_{T(2,3)}(D_2),$ $\Phi_{T(2,3)}(c_1)$ are shown in Figure~\ref{scc5}.
The total space $E(1)_{T(2,3)}$ is diffeomorphic to $E(1)_{2,3}$, which is an exotic copy of $E(1)$ known as the Dolgachev surface. 

We can draw the relative trisection diagram of $E(1)_{2,3}-\Sigma_2 \times D^2$ constructed from this Lefschetz fibration as in Figure \ref{fig:E(1)_2,3_rtd}. Thus, from Theorem \ref{thm:-n-section}, we can construct a $(27,5)$-trisection diagram of $E(1)_{2,3} \# 2\mathbb{C}P^2 \cong 3\mathbb{C}P^2 \# 9\overline{\mathbb{C}P^2}$ as in Figure \ref{fig:E(1)_2,3_ 2CP^2}. See Figure \ref{fig:shoryaku} for the notation in Figures \ref{fig:E(1)_2,3_rtd}, \ref{fig:E(1)_2,3_artd} and \ref{fig:E(1)_2,3_ 2CP^2}.
Similarly, we can construct the trisection diagram of $E(n)_K\# 2\mathbb{C}P^2 \cong (2n+1)\mathbb{C}P^2 \# (10n-1)\overline{\mathbb{C}P^2}$ for any 2-bridge fibered knot $K$.
\end{exm}

\begin{figure}[h]
\begin{center}
\includegraphics[width=13cm,page=1]{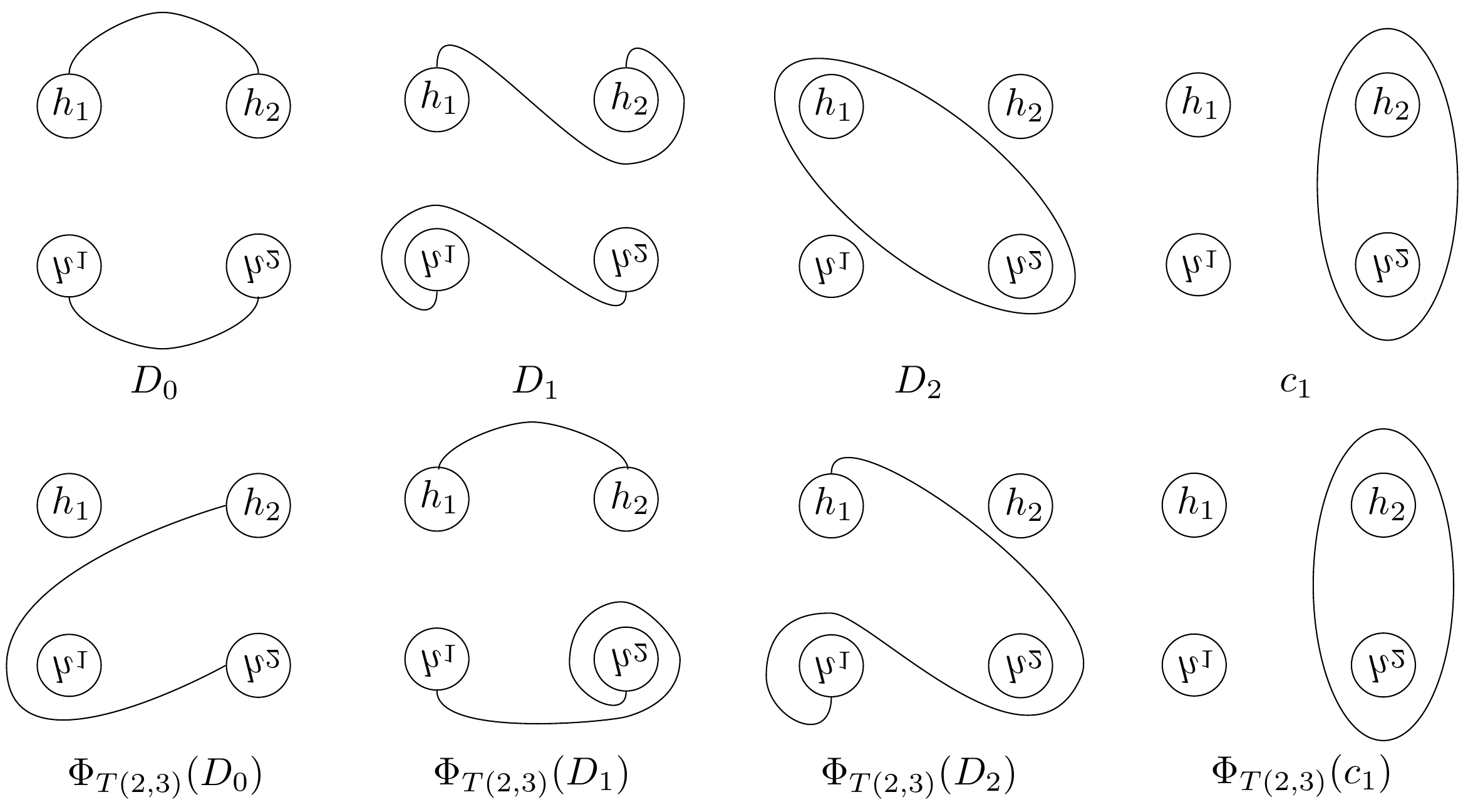}
\end{center}
\setlength{\captionmargin}{48pt}
\caption{The simple closed curves $D_0,$ $D_1,$ $D_2,$ $c_1,$ $\Phi_{T(2,3)}(D_0),$ $\Phi_{T(2,3)}(D_1),$ $\Phi_{T(2,3)}(D_2),$ $\Phi_{T(2,3)}(c_1)$ on $\Sigma_{2}$.}
\label{scc5}
\end{figure}

\begin{figure}[h]
\begin{center}
\includegraphics[width=12cm, height=12cm, keepaspectratio, scale=1]{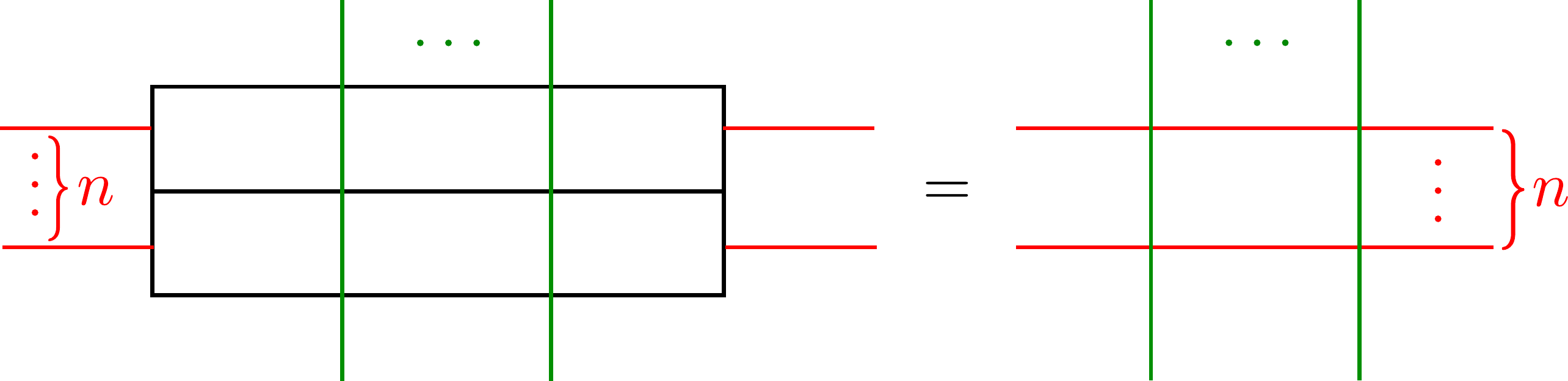}
\end{center}
\setlength{\captionmargin}{50pt}
\caption{A notation for $\alpha$ curves of trisection diagrams in Figures \ref{fig:E(1)_2,3_rtd}, \ref{fig:E(1)_2,3_artd} and \ref{fig:E(1)_2,3_ 2CP^2}.}
\label{fig:shoryaku}
\end{figure}

\begin{figure}[h]
\begin{center}
\includegraphics[width=18cm, height=20cm, keepaspectratio, scale=1]{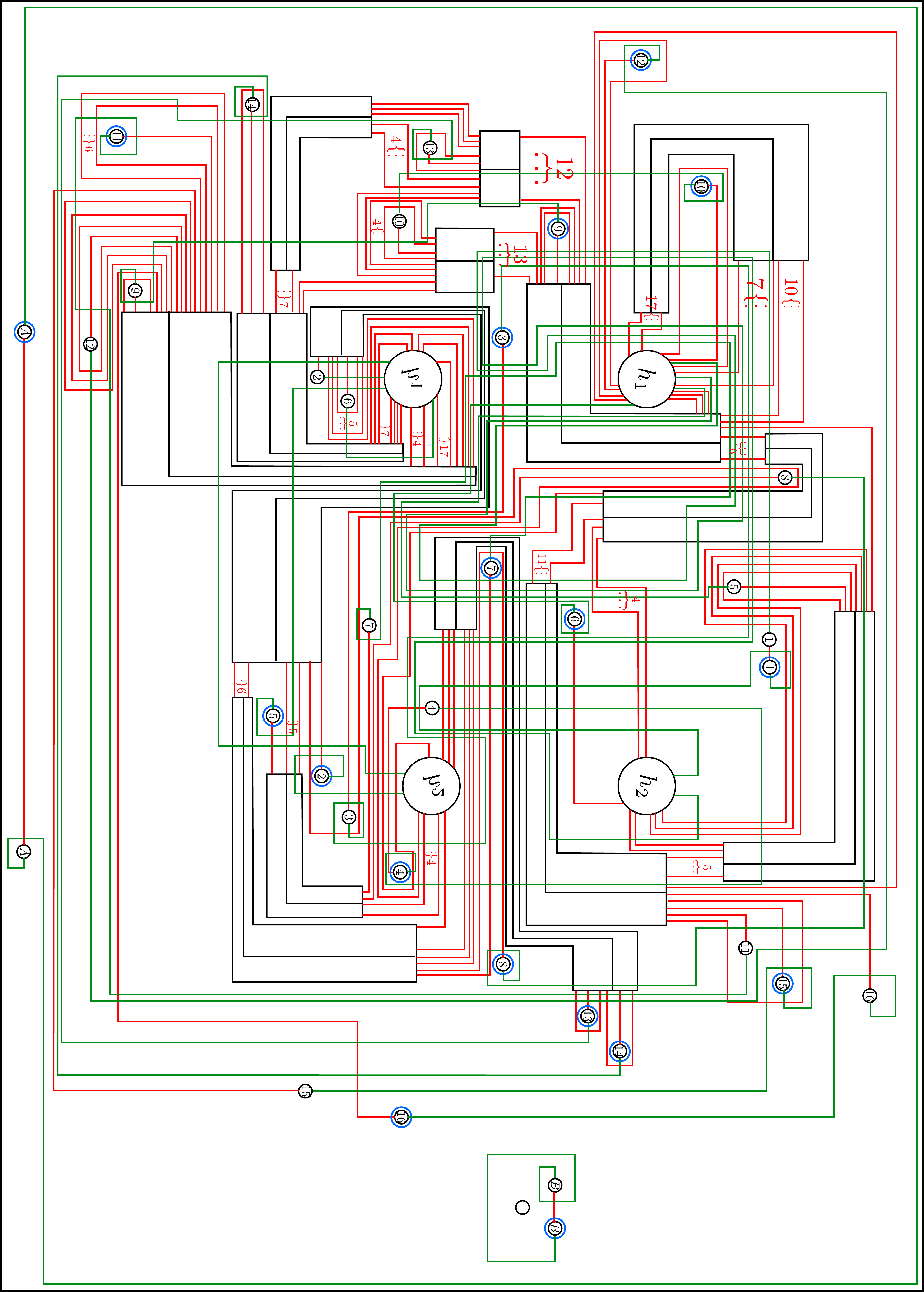}
\end{center}
\setlength{\captionmargin}{60pt}
\caption{A $(20,5;2,2)$-relative trisection diagram of $E(1)_{2,3}-\Sigma_2 \times D^2$ with monodromy $(t_{D_0}t_{D_1}t_{D_2}t_{c_1})^2 
(t_{\Phi_{T(2,3)}(D_0)}t_{\Phi_{T(2,3)}(D_1)}t_{\Phi_{T(2,3)}(D_2)}t_{\Phi_{T(2,3)}(c_1)})^2 \circ t_{\delta_1} \circ t_{\delta_2}^{-1} \in \operatorname{Mod}(\Sigma_{p,2})$, where $\Phi_{T(2,3)} = t_{a_2}^{-1}t_{a_1}^{-1}$.}
\label{fig:E(1)_2,3_rtd}
\end{figure}

\begin{figure}[h]
\begin{center}
\includegraphics[width=18cm, height=20cm, keepaspectratio, scale=1]{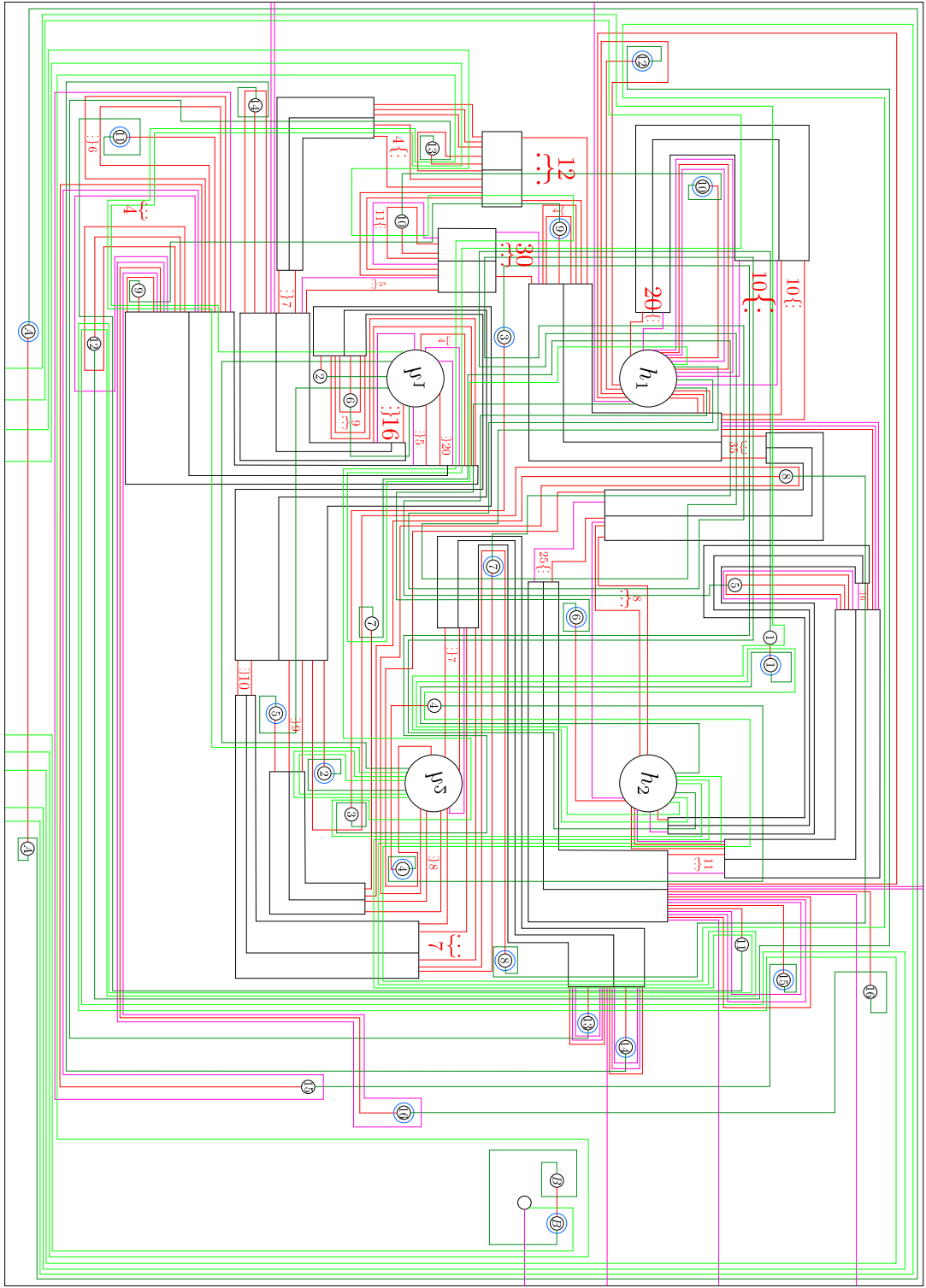}
\end{center}
\setlength{\captionmargin}{50pt}
\caption{An arced relative trisection diagram of $E(1)_{2,3}-\Sigma_2 \times D^2$ obtained from Figure \ref{fig:E(1)_2,3_rtd}. The blue arcs parallel to the pink arcs are omitted.}
\label{fig:E(1)_2,3_artd}
\end{figure}

\begin{figure}[h]
\begin{center}
\includegraphics[width=20cm, height=20cm, keepaspectratio, scale=1]{E_1__2,3__2CP_2.pdf}
\end{center}
\setlength{\captionmargin}{50pt}
\caption{A $(27,5)$-trisection diagram of $E(1)_{2,3}\#2\mathbb{C}P^2 \cong 3\mathbb{C}P^2 \# 9 \overline{\mathbb{C}P^2}$. The blue curves parallel to the pink curves are omitted.}
\label{fig:E(1)_2,3_ 2CP^2}
\end{figure}

\begin{que}\label{que:CP^2}
Can we explicitly construct trisection diagrams of 4-manifolds admitting achiral Lefschetz fibrations over $S^2$ with a $(-n)$-section from Theorem \ref{thm:-n-section} by removing $\abs{n}$ connected summands of $\mathbb{C}P^2$ or $\overline{\mathbb{C}P^2}$? In particular, what about exotic 4-manifolds such as $E(1)_{2,3}$ in Example \ref{exm:E(1)_2,3}?
\end{que}

\begin{que}\label{que:stabili}
Is a trisection diagram constructed in Corollary \ref{cor:ACD} related to the standard trisection diagram of $k \mathbb{C}P^2 \# \ell \overline{\mathbb{C}P^2}$ shown in Figure \ref{fig:CP^2s} by surface diffeomorphisms, handle slides among the same family curves and destabilizations? In particular, what about trisection diagrams in Examples \ref{exm:E(n)} and \ref{exm:E(1)_2,3}?
\end{que}

Question \ref{que:stabili} is an analogue of Conjecture \ref{conj:4DWC} for $k \mathbb{C}P^2 \# \ell \overline{\mathbb{C}P^2}$. It is not known whether there exists a simply-conected closed 4-manifold admitting a non-isotopic trisection. On the other hand, Islambouli \cite{MR4308281} showed that there exist non-simply conected closed 4-manifolds admitting non-isotopic trisections.

\begin{rem}
In Theorem \ref{thm:-n-section}, if $n=1$, we can construct a trisection diagram of $X \# \mathbb{C}P^2$ for a closed 4-manifold $X$ admitting an achiral Lefschetz fibration with a $(-1)$-section. On the other hand, Castro and Ozbagci \cite{MR3999550} constructed a trisection diagram of the 4-manifold $X$ itself by using the Lefschetz fibration. Thus, we can obtain another triseciton diagram of $X \# \mathbb{C}P^2$ by taking the connected sum of the trisection diagram of $X$ with a genus-1 trisection diagram of $\mathbb{C}P^2$ such as Figure \ref{fig:CP^2}. It is not known whether these two trisection diagrams of $X \# \mathbb{C}P^2$ are related without stabilizations, namely, related by surface diffeomorphisms, handle slides among the same family curves and destabilizations.
\end{rem}

\bibliographystyle{amsalpha}
\bibliography{trisection, ACD, LF}

\end{document}